\newtheorem{theorem}{Theorem}[section]
\newtheorem{lemma}[theorem]{Lemma}
\newtheorem{proposition}[theorem]{Proposition}
\newtheorem{corollary}[theorem]{Corollary}
\newcommand{\T}{{\mathbb T}}
\newcommand{\N}{{\mathbb N}}
\newcommand{\R}{{\mathbb R}}
\newcommand{\pa}{{\partial}}
\newcommand{\eps}{{\varepsilon}}
\def\Im{{\cal I}m \,}
\def\Re{{\rm Re}  \,}
\def\mspace{\medskip \noindent}
\numberwithin{equation}{section}
\newcommand{\norm}[1]{\left\Vert #1 \right\Vert}
\newcommand{\abs}[1]{\left| #1 \right|}
\newcommand{\les}{{\; \lesssim \,}}
\newcommand{\omi}{\omega^{in}}
\newcommand{\ombl}{\omega^{bl}}
\newcommand{\vin}{v^{in}}
\newcommand{\vbl}{v^{bl}}
\newcommand{\uin}{u^{in}}
\newcommand{\ubl}{u^{bl}}
\title{Well-posedness of the hydrostatic Navier-Stokes equations}
\author{David G\'erard-Varet\footnote{IMJ-PRG, Universit\'e Paris Diderot and IUF. Email: {\textrm david.gerard-varet@imj-prg.fr}. } \qquad 
 Nader Masmoudi\footnote{Courant Institute of Mathematical Sciences, New York University. Email: {\textrm masmoudi@cims.nyu.edu}.} \qquad  Vlad Vicol\footnote{Department of Mathematics, Princeton University. Email: {\textrm vvicol@math.princeton.edu}.}}
\date{\today}
\begin{document}
\maketitle

\begin{abstract} 
We address the local well-posedness of the  {\em  hydrostatic Navier-Stokes} equations. These equations, sometimes called  {\em reduced Navier-Stokes/Prandtl},  appear as a formal limit of the Navier-Stokes system in thin domains, under certain constraints on the aspect ratio and the Reynolds number. It is known that without any structural assumption on the initial data, real-analyticity is both necessary~\cite{Renardy09} and sufficient~\cite{KukavicaTemamVicolZiane11} for the local well-posedness of the system.  In this paper we prove that for convex initial data, local well-posedness holds under simple Gevrey regularity. 
\end{abstract}

\section{Introduction}
The present paper is devoted to the study of the following two-dimensional system: 
\begin{subequations}
\label{eq:HNS}
\begin{align}
\pa_t u + u \pa_x u + v \pa_y u  +  \pa_x p -  \eta \pa^2_y u & = 0, \quad (x,y) \in \T \times (0,1), 
\label{eq:HNS:evo}\\ 
\pa_y p & = 0, \quad (x,y) \in \T \times (0,1), \\
\pa_x u + \pa_y v & = 0, \quad (x,y) \in \T \times (0,1), \label{eq:HNS:incompressible}\\
u\vert_{y=0,1} = v\vert_{y=0,1} & = 0, \quad x \in \T, \label{eq:HNS:BC}
\end{align}
\end{subequations}
where $\eta > 0$. The unknowns of this system  are $(u,v) = (u,v)(x,y,t)$ and  $p = p(x,y,t)$, which model respectively the velocity field  and pressure of a fluid flow. The boundary condition \eqref{eq:HNS:BC} corresponds to a no-slip condition at the walls $y=0,1$. With respect to the tangential variable $x$ we impose $\T$-periodic (lateral) boundary conditions.

\mspace
Note that upon integrating in $y$ the incompressibility equation \eqref{eq:HNS:incompressible}, using the boundary condition for $v$ \eqref{eq:HNS:BC} we obtain the compatibility condition
\begin{align}
\pa_x \int_0^1 u(x,y,t)  dy =0 
\label{eq:u:compatibility}
\end{align}
for all $x\in \T$ and $t\geq 0$, so that the vertical mean of $u$ is just a function of time. Condition~\eqref{eq:u:compatibility} allows us to compute the pressure gradient, cf.~\eqref{expp} below, and to obtain the boundary condition for the vorticity, cf.~\eqref{eq:HNS:vort:BC} below.

\mspace
System \eqref{eq:HNS} is  formally obtained~\cite{LagreeLorthois05,Renardy09} when considering the asymptotics of the two-dimensional Navier-Stokes in a thin domain:  $\Omega = (0,L) \times (0,l)$ with $\delta = \frac{l}{L} \ll 1$. After a proper rescaling 
$$t := \frac{Ut}{L}, \quad  x := \frac{x}{L}, \quad y :=  \frac{y}{l}, \quad u := \frac{u}{U}, \quad v := \frac{v}{\delta U},  $$
the Navier-Stokes equation becomes
\begin{subequations}  
\label{NS}
\begin{align}
\pa_t u + u \pa_x  u + v \pa_y u + \pa_x p  - \eta \delta^2 \pa^2_x  -  \eta \pa^2_y u & = 0, \quad (x,y) \in \T \times (0,1), \\ 
\delta^2 (\pa_t v + u \pa_x  v + v \pa_y v) + \pa_y p - \eta \delta^4 \pa^2_x v  -  \eta \delta^2 \pa^2_y v & = 0, \quad (x,y) \in \T \times (0,1), \\ 
 \pa_x u + \pa_y v & = 0, \quad (x,y) \in \T \times (0,1),
\end{align}
\end{subequations}
where $\eta = \frac{1}{\delta^2 \textrm{Re}}$, with $\textrm{Re} = \frac{U L}{\nu}$ the Reynolds number. 
If we assume  $\eta \sim 1$ and keep the leading order terms as $\delta \to 0$, or if we assume $\eta \ll 1$ and keep both the leading order   and next order terms in \eqref{NS}, we end up with  \eqref{eq:HNS}. 

\mspace
Our concern here will be the local in time well-posedness of \eqref{eq:HNS}. Besides its mathematical relevance, this problem is meaningful from the point of view of hydrodynamic stability, notably with regards to the properties of the so-called {\em primitive equations}: 
\begin{subequations}  \label{PE}
\begin{align}
\pa_t u + u \pa_x  u + v \pa_y u + \pa_x p  - \eta' \pa^2_x  -  \eta \pa^2_y u & = 0, \quad (x,y) \in \T \times (0,1), \\ 
 \pa_y p  & = 0, \quad (x,y) \in \T \times (0,1), \\ 
 \pa_x u + \pa_y v & = 0, \quad (x,y) \in \T \times (0,1).  
\end{align}
\end{subequations}
This model and  its three-dimensional counterpart  are very important in atmospheric sciences, after accounting for gravity and many other features~\cite{LionsTemamWang92a,LionsTemamWang92b,TemamZiane04,PetcuTemamZiane09}.  For positive values of tangential and transverse viscosity coefficients, they are known to be globally well-posed in the Sobolev setting in both the two and the three dimensional case~\cite{Ziane97b,BreschGuillenMadmoudiRodriguez03,BreschKazhikhovLemoine04,TemamZiane04,CaoTiti07, Kobelkov07,KukavicaZiane07,KukavicaZiane08}, and the vanishing viscosity limit $\eta,\eta'\to 0$ can be characterized in the real-analytic category~\cite{KukavicaLombardoSammartino16}. Yet, in the absence of additional  turbulent viscosity, the dimensional analysis of \eqref{NS} shows that the tangential diffusion coefficient $\eta'$ is expected to be very small. This allows to relate the well/ill-posedness of \eqref{eq:HNS} and the 
stability/instability properties of \eqref{PE}. For instance,  assume that \eqref{eq:HNS} is linearly ill-posed without analyticity in $x$: a result in this direction was shown in \cite{Renardy09}, and will be discussed later on. It roughly means that, at least in the early stages of the evolution, there are perturbations with wave number $k \gg 1$ in $x$ that grow like $e^{|k| t}$. From there, if $\eta'$ is small enough so that  $\eta' |k|^2 \ll 1$, one can expect  the tangential diffusion $- \eta' \pa^2_x$  to  stay negligible, and the perturbation to be an approximate solution of \eqref{PE} (with Dirichlet conditions). This can result in a growth almost as strong as  $e^{t/\sqrt{\eta'}}$, showing the strong instability of \eqref{PE}. We note that if one keeps $\eta'>0$ in \eqref{PE} while setting $\eta=0$,  the local well-posedness can be established for Sobolev initial datum~\cite{CaoLiTiti16,CaoLiTiti17}, confirming that the horizontal dissipation dominated equation is much more stable that the hydrostatic Navier-Stokes system~\eqref{eq:HNS} considered in this paper.

\mspace
From a mathematical perspective, system \eqref{NS} is  reminiscent of the two-dimensional Prandtl system, describing boundary layer flows. The latter is set in a half-plane, say $\T \times \R_+$, and reads
\begin{subequations} \label{P}
\begin{align}
\pa_t u + u \pa_x u + v \pa_y u  +  \pa_x p -  \eta \pa^2_y u & = 0, \quad (x,y) \in \T \times \R_+, \\ 
\pa_y p & = 0, \quad (x,y) \in \T \times \R_+, \\
\pa_x u + \pa_y v & = 0, \quad (x,y) \in \T \times \R_+, \\
u\vert_{y=0} = v\vert_{y=0} & = 0, \\
\lim_{y \rightarrow +\infty} u = u^\infty, \lim_{y \rightarrow +\infty} p & = p^\infty. 
\end{align}
\end{subequations}
Hence, the only difference with \eqref{eq:HNS} lies in the domain and in the boundary conditions. Here, $u^\infty$ and $p^\infty$ are given data, related to the Euler flow above the boundary layer. In particular, as $p$ does not depend on $y$, it is no longer an unknown of the system. This is a major difference with \eqref{eq:HNS}, where $p$ can be seen as a Lagrange multiplier, associated to the constraint that $v = - \int_0^y \pa_x u$ vanishes at $y=1$ (see \eqref{expp} below).  

\mspace
The well-posedness properties of \eqref{P} are now well-understood, and depend on the monotonicity properties of the initial data. Roughly, if the  data have Sobolev regularity, and if furthermore the initial data are monotonic in $y$, \eqref{P} has local in time Sobolev solutions~\cite{Oleinik66,MasmoudiWong15}. On the other hand, without monotonicity, system \eqref{P} is ill-posed in Sobolev spaces \cite{GerardVaretDormy10,GerardVaretNguyen12}. Local in time well-posedness can be achieved when the initial datum is real analytic~\cite{SammartinoCaflisch98a, KukavicaVicol13}, and even under the milder condition of Gevrey regularity in $x$~\cite{GerardVaretMasmoudi13}. We refer to \cite{EEngquist97,XinZhang04,GerardVaretMaekawaMasmoudi16,IgnatovaVicol16,KukavicaVicolWang17,DalibardMasmoudi18} and references therein for more results on the Prandtl system such as singularities, long time behavior, and Gevrey-class stability.  Interestingly, the instability mechanism that yields ill-posedness in Sobolev involves in a crucial manner the lack of monotonicity and  the diffusion term $-\eta \pa^2_y u$. Indeed, the inviscid version of Prandtl, that is 
 \begin{subequations} \label{IP}
\begin{align}
\pa_t u + u \pa_x u + v \pa_y u  +  \pa_x p  & = 0, \quad (x,y) \in \T \times \R_+, \\ 
\pa_y p & = 0, \quad (x,y) \in \T \times \R_+, \\
\pa_x u + \pa_y v & = 0, \quad (x,y) \in \T \times \R_+, \\
v\vert_{y=0} & = 0, \\
\lim_{y \rightarrow +\infty} p & = p^\infty,
\end{align}
\end{subequations}
has local smooth solutions for smooth data, as can be shown by the method of characteristics \cite{HongHunter03}.  

\mspace
With regards to this recent understading of the Prandtl system, it is very natural to ask about the local well-posedness of \eqref{eq:HNS}, and to start from the consideration of the inviscid case $\eta = 0$, namely
\begin{subequations} \label{HE}
\begin{align}
\pa_t u + u \pa_x u + v \pa_y u  +  \pa_x p  & = 0, \quad (x,y) \in \T \times (0,1), \\ 
\pa_y p & = 0, \quad (x,y) \in \T \times (0,1), \\
\pa_x u + \pa_y v & = 0, \quad (x,y) \in \T \times (0,1), \\
v\vert_{y=0,1} & = 0.  
\end{align}
\end{subequations}
This {\em hydrostatic Euler system} has been the matter of many studies \cite{Brenier99,Grenier99,Brenier03,Renardy09,KukavicaTemamVicolZiane11,MasmoudiWong12,KukavicaMasmoudiVicolWong14,CaoIbrahimNakanishiTiti15,Wong15}. Contrary to \eqref{IP}, existence of local strong solutions requires a structural assumption, namely the  uniform convexity (or concavity) in variable $y$ of the initial data. {\it A contrario}, the presence of inflexion point may trigger high-frequency instability. This point was established in  article \cite{Renardy09}. The author considers in \cite{Renardy09} the linearization of \eqref{HE} around shear flows $u = U_s(y), v = 0$. More precisely, he shows that if the equation $ \int_0^{1} (U_s(y) - c)^{-2} dy = 0$ has complex roots, then the linearized hydrostatic Euler system admits perturbations which have  wavenumber $k$ in $x$ and  grow like $e^{\delta k t}$, $\delta > 0$, for all $k \gg 1$. Back to the nonlinear problem \eqref{HE}, one can only expect to show short time stability for data whose Fourier transform in $x$ behaves like $e^{-\delta |k|}$ for large $k$. This corresponds to analytic data in $x$. Local well-posedness in the analytic setting was established in \cite{KukavicaTemamVicolZiane11}. Moreover, it is mentioned in \cite{Renardy09} that this high-frequency instability persists in the case of the viscous system \eqref{eq:HNS}, at least for small enough $\eta$. 

\mspace
Considering all these results,  the remaining task is to analyse the  viscous system \eqref{eq:HNS} for convex (or concave) initial data. This is the purpose of this paper.  It raises strong mathematical issues, related to the control of $x$ derivatives of the solution. In particular, we find
$$
\pa_t (\pa_x u) +  (u \pa_x + v \pa_y)  (\pa_x u)  + (\pa_x u)^2  + (\pa_x v) \pa_y u +  \pa_x (\pa_x p) -  \eta \pa^2_y (\pa_x u)  = 0. 
$$
One of the main problems in controlling $\pa_x u$ is the term $\pa_x v \pa_y u$. Indeed, $\pa_x v = - \int_0^y \pa_x^2 u$ is recovered from the divergence-free condition, so that it can be seen as a first oder operator in $x$ applied to $\pa_x u$. As this first order term has no skew-symmetry, it does not disappear from energy estimates, so that standard energy arguments can only be conclusive with the help of analyticity. 
In the case of the hydrostatic Euler system, the way out of this difficulty consists in considering the (approximate) vorticity $\omega = \pa_y u$. 
Its tangential derivative is seen to satisfy
$$ \pa_t (\pa_x \omega) \: + \:  (u \pa_x  + v \pa_y) (\pa_x \omega)  \: + \: (\pa_x u) \, (\pa_x \omega) \: + \: (\pa_x v) \, \pa_y \omega  =  0.  
$$
Under a uniform convexity or concavity assumption $|\pa_y \omega| \ge \alpha$, the idea is to test the equation against $\pa_x \omega/ \pa_y \omega$ rather than $\pa_x \omega$,  to take advantage of the cancellation: 
$$ \int  \pa_x v \, \pa_x \omega = - \int \pa_y \pa_x v \, \pa_x u  =  \int \pa_x^2 u \, \pa_x u = 0. $$
This allows to get rid of the bad term, and is the starting point of the local well-posedness argument. Such an idea was used previously in~\cite{Grenier,MasmoudiWong12}. 

\mspace
Unfortunately, this manipulation, that we will call {\em the hydrostatic trick}, is not fully appropriate to the viscous system \eqref{eq:HNS}. The reason is that in the estimate for $\pa_x \omega$,  the viscous term generates  extra boundary integrals such as
$$I^\flat = \eta \int_{\T \times \{0\}} \pa_y \pa_x \omega \frac{\pa_x \omega}{\pa_y \omega} dx, \quad    I^\sharp= \eta \int_{\T \times \{1\}} \pa_y \pa_x \omega \frac{\pa_x \omega}{\pa_y \omega} dx. 
$$
The value of  $\pa_y \pa_x \omega$ at the boundary can be obtained from the equation on $\pa_x u$, and yields for instance (the computation will be detailed later)
$$ \pa_y \pa_x  \omega\vert_{y=0} =  \pa_x^2 p = - 2 \pa_x \int_0^1  u \,  \pa_x u \, dy \: + \: \pa_x \omega\vert_{y=1}  - \pa_x \omega\vert_{y=0}. 
$$ 
The issue comes from the first term at the right hand-side, which is again a first order term in $\pa_x u$ without any skew-symmetric structure. In other words, {\em there is an additional loss of derivative compared to the Prandtl equation},  so that obtaining well-posedness below analytic regularity is  challenging. This is our goal in what follows, and we prove in Theorem~\ref{thm:main} below the  local well-posedness under Gevrey regularity of class $9/8$ in the $x$ variable, under an extra convexity assumption  in $y$. 

\section{Main result and strategy}
For notational simplicity, from now one we will set $\eta = 1$ in \eqref{eq:HNS}.
 Let $\Omega = \T \times (0,1)$. For $\tau > 0$, $\gamma \ge 1$, we define the Gevrey norm
$$ \| f \|_{\gamma,\tau}^2 =  \sum_{j=0}^\infty \tau^{2j} (j!)^{-2\gamma} \| \pa_x^j f \|_{L^2(\Omega)}^2  . $$
 Functions $f$ satisfying  $\| f \|_{\gamma,\tau} < +\infty$ are in Gevrey class $\gamma$ with respect to $x$,  measured in $L^2$ in variable $y$. 
 Our main result is the following:
\begin{theorem}[\bf Well-posedness for convex Gevrey-class initial datum]
\label{thm:main}
Let $\tau^0 > \tau_1 > 0$, $\gamma \le 9/8$. Let $u_0$ a function satisfying the regularity condition 
\begin{align}
\| \pa_y u_0 \|_{\gamma,\tau^0} +   \| \pa_y^3 u_0 \|_{\gamma, \tau^0}  <  +\infty,
\label{eq:thm:main:1}
\end{align}
the convexity condition 
\begin{align}
\inf_{\Omega}  \pa^2_y u_0 > 0,
\label{eq:thm:main:2}
\end{align}
and the compatibility conditions  $\pa_x \int_0^1 u_0 dy = 0$, $u_0\vert_{y=0,1} = 0$, 
$$\pa^2_y u_0\vert_{y=0,1} =  \int_0^1 (-\pa_x u_0^2 + \pa^2_y u_0 ) dy \: - \: \int_\Omega \pa^2_y u_0.$$   
Then there exists  $T > 0$, and a unique solution $u$ of \eqref{eq:HNS} with initial data $u_0$ that satisfies 
\begin{equation*}
\sup_{t \in [0,T]} \left(\| \pa_y u(t) \|_{\gamma,\tau_1} + \|  \pa^3_y u(t) \|_{\gamma,\tau_1}\right)  < +\infty. 
\label{eq:thm:main:3}
\end{equation*}
and 
\begin{align}
\inf_{t \in [0,T] \times \Omega} \pa^2_y u > 0 .
\label{eq:thm:main:4}
\end{align}
\end{theorem}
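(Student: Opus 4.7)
My plan is to pass to the approximate vorticity $\om := \pa_y u$, which solves
$$
\pa_t \om + u\pa_x \om + v\pa_y \om - \pa_y^2 \om = 0 \quad \text{in } \T \times (0,1),
$$
with Neumann data $\pa_y \om|_{y=0,1} = \pa_x p$ obtained by evaluating \eqref{eq:HNS:evo} at the walls. Integrating \eqref{eq:HNS:evo} in $y$ and using \eqref{eq:u:compatibility} gives the explicit expression
$$
\pa_x p = -\pa_x\!\!\int_0^1 u^2\, dy + \om|_{y=1} - \om|_{y=0} - \dot U(t),
$$
where $\dot U(t)$ is recoverable from the $x$-average of $\om|_{y=0,1}$. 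The convexity assumption \eqref{eq:thm:main:2}, combined with one-dimensional Sobolev control in $y$ (this is where $\|\pa_y^3 u_0\|_{\gamma,\tau^0}$ in \eqref{eq:thm:main:1} enters, since $\pa_y^3 u = \pa_y^2 \om$), propagates $\pa_y \om \ge \alpha/2$ on a short time interval, so that $1/\pa_y \om$ is a legitimate weight. I would then introduce a decreasing radius $\tau(t)$ with $\tau(0)=\tau^0$ and the weighted sum
$$
\mathcal E(t) := \sum_{j\ge 0} \frac{\tau(t)^{2j}}{(j!)^{2\gamma}} \int_\Omega \frac{|\pa_x^j \om|^2 + |\pa_x^j \pa_y^2 \om|^2}{\pa_y \om}\, dx\,dy,
$$
for which \eqref{eq:thm:main:1} yields $\mathcal E(0)<\infty$.

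\textbf{The hydrostatic trick.} Differentiating the vorticity equation $j$ times in $x$ and testing against $\pa_x^j \om / \pa_y \om$, the top-order commutator $(\pa_x^j v)(\pa_y \om)$ is the term that would lose one tangential derivative, since $\pa_x^j v = -\int_0^y \pa_x^{j+1} u\, dy'$. The hydrostatic cancellation of \cite{Grenier,MasmoudiWong12} removes it:
$$
\int_\Omega (\pa_x^j v)(\pa_y\om) \cdot \frac{\pa_x^j \om}{\pa_y \om}\, dx\,dy \;=\; -\!\int_\Omega (\pa_y \pa_x^j v)(\pa_x^j u)\;=\; \int_\Omega (\pa_x^{j+1} u)(\pa_x^j u) \;=\; 0,
$$
after integrating by parts in $y$ (using $v|_{y=0,1}=0$ and $\pa_y v = -\pa_x u$) and then in $x$ on the torus. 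The subprincipal commutators $(\pa_x^k v)(\pa_x^{j-k}\pa_y\om)$ and $(\pa_x^k u)(\pa_x^{j+1-k}\om)$ for $1\le k<j$ are handled by standard tame Gevrey product inequalities, and an analogous computation is performed for $\pa_y^2 \om$ to close the normal-derivative piece of $\mathcal E$.

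\textbf{Viscous boundary terms and the $9/8$ threshold.} Integration by parts of $-\int \pa_y^2(\pa_x^j \om)\cdot \pa_x^j\om/\pa_y\om$ yields an interior dissipation $\int|\pa_y\pa_x^j\om|^2/\pa_y\om$ plus boundary contributions
$$
I_j^\flat := \int_\T \pa_y \pa_x^j \om|_{y=0}\cdot \frac{\pa_x^j \om|_{y=0}}{\pa_y \om|_{y=0}}\, dx, \qquad I_j^\sharp \ \text{analogous at } y=1.
$$
The Neumann expression above gives $\pa_y \pa_x^j \om|_{y=0} = -\pa_x^{j+1}\!\int_0^1 u^2\, dy + \pa_x^j \om|_{y=1} - \pa_x^j \om|_{y=0}$; the first piece costs one extra $x$-derivative and is the genuinely dangerous one, as emphasized in the introduction. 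Using the trace inequality $\|(\pa_x^j \om)|_{y=0}\|_{L^2_x}^2 \lesssim \|\pa_x^j \om\|_{L^2(\Omega)}\|\pa_y\pa_x^j\om\|_{L^2(\Omega)}$ and Cauchy--Schwarz on $\pa_x^{j+1}\!\int u^2\, dy$, the contribution of $I_j^\flat, I_j^\sharp$ to $\dot{\mathcal E}$ should be absorbable by a combination of the parabolic gain from $\|\pa_y\pa_x^j\om\|^2/(j!)^{2\gamma}$ and the analyticity-radius gain $(-\dot\tau)\sum j\,\tau^{2j-1}/(j!)^{2\gamma}$ from the shrinking Gevrey ball. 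Balancing the half-derivative trace loss against the one-derivative loss in $x$ from the pressure, set against the Gevrey combinatorial factor $((j+1)!/j!)^{2\gamma}$, is precisely what pins the admissible index at $\gamma \le 9/8$.

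\textbf{Existence, uniqueness, and main obstacle.} Once the closed differential inequality $\dot{\mathcal E} + \alpha(-\dot\tau)\mathcal D \le C(1+\mathcal E)^p\,\mathcal E$ is established for a suitable dissipation norm $\mathcal D$, choosing $\tau(t)=\tau^0-Kt$ with $K$ large and $T$ small enough that $\tau(T)\ge \tau_1$ yields the a priori bound in \eqref{eq:thm:main:3}, while \eqref{eq:thm:main:4} is preserved on $[0,T]$ by the Sobolev control on $\pa_y^2 u$. Existence is obtained by running the same energy identity on a regularized system (e.g.\ a Galerkin truncation in $x$ together with a small artificial tangential viscosity $\eps\pa_x^2 u$, which is classically parabolic and admits $\eps$-independent Gevrey bounds), then passing to the limit; uniqueness follows by the same hydrostatic energy estimate applied to the difference of two solutions. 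The technical heart is the viscous boundary integral, which has no counterpart either in the hydrostatic Euler problem \cite{KukavicaTemamVicolZiane11} (no viscosity) or in Prandtl \cite{GerardVaretMasmoudi13} (pressure prescribed from above the layer rather than a Lagrange multiplier enforcing $v|_{y=1}=0$); managing the tangential derivative loss from the nonlocal pressure recovery together with the trace loss on the viscous term is what forces the $9/8$ ceiling, and this is the step where I expect all the delicacy of the proof to concentrate.
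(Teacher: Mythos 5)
There is a genuine gap, and it sits exactly at the step you yourself identify as the heart of the proof: the viscous boundary integrals $I_j^\flat, I_j^\sharp$. Your proposal is to write $\pa_y\pa_x^j\omega\vert_{y=0}= -\pa_x^{j+1}\int_0^1 u^2\,dy + \pa_x^j\omega\vert_{y=1}-\pa_x^j\omega\vert_{y=0}$, estimate the trace of $\pa_x^j\omega$ by $\norm{\pa_x^j\omega}^{1/2}\norm{\pa_y\pa_x^j\omega}^{1/2}$, and absorb the result into the dissipation plus the radius-shrinking term $\beta(j+1)\norm{\omega_j}^2$. A count of exponents shows this cannot close for the claimed range of $\gamma$: in the weighted bookkeeping the dangerous piece behaves like $(j+1)^\gamma\norm{\omega_{j+1}}\,\norm{\omega_j}^{1/2}\norm{\pa_y\omega_j}^{1/2}$, while the available gains are $\beta(j+1)\norm{\omega_{j+1}}^2$, $\beta(j+1)\norm{\omega_j}^2$ and $\norm{\pa_y\omega_j}^2$, the last with no growth in $j$. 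Distributing optimally, absorption requires $(j+1)^\gamma\lesssim \beta^{3/4}(j+1)^{3/4}$, i.e.\ $\gamma\le 3/4$; even discarding the trace loss entirely one only reaches $\gamma\le 1$. So the "parabolic gain plus analyticity-radius gain" you invoke is insufficient for any $\gamma>1$, and the assertion that this balance "pins the admissible index at $9/8$" is not correct; in the paper the $9/8$ ceiling actually comes from a completely different place (the estimate of the interior--boundary-layer interaction term $T_{5j}$, where the constraint $2\gamma-\tfrac74\le\tfrac12$ appears).

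What your argument is missing is the paper's central new device: a boundary-layer corrector. One solves explicitly (by Fourier transform in $t$) a heat problem $(\pa_t-\pa_y^2)\omega^\flat=0$ with the bad Neumann datum $\pa_x h$, $h\sim -\int_0^1 u^2\,dy$, sets $\omega=\omega^{in}+\omega^{bl}$, and applies the hydrostatic weighted estimate only to $\omega^{in}$, whose boundary condition no longer contains the derivative-losing term. The reason this closes below analyticity is quantitative: because the datum enters at frequency-dependent rate $\beta(j+1)$, the corrector is localized at scale $(\beta(j+1))^{-1/2}$ near the walls, so multiplication by $y$ or integration in $y$ gains inverse powers of $\beta(j+1)$ (Lemma~\ref{lem:BL:1}); combined with the self-consistent bound of Lemma~\ref{lem:h:omega:in}, which shows $\abs{h}$ is controlled by $\norm{\omega^{in}}$ with a prefactor that can be made small for $\gamma\le 5/4$, all boundary-layer contributions in the $\omega^{in}$ estimate become perturbative. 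Without this decomposition (or an equivalent mechanism converting Gevrey regularity in $x$ into smallness in $y$), your energy inequality does not close for $\gamma\in(1,9/8]$. Secondary, but also needed: propagating \eqref{eq:thm:main:4} and the bound on $\pa_y^2\omega$ is not a soft consequence of "Sobolev control"; the paper requires separate Gevrey estimates for $\pa_t\omega$ (Proposition~\ref{prop:omega:dot}) feeding a parabolic minimum/maximum principle for $\pa_y\omega$, and the third compatibility condition of the theorem is exactly what makes these time-derivative estimates available at $t=0$ — an ingredient your outline does not use. Your existence scheme via the $\eps\pa_x^2$ regularization and the hydrostatic trick itself do match the paper.
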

\noindent
A few remarks are in order: 
\begin{itemize}
\item The main point in our result is that we prove local well-posedness without analyticity, reaching exponents $\gamma > 1$. The value 
$\gamma = 9/8$ is due to technical limitations, and could certainly be improved. The optimal value that can be expected for $\gamma$, or even the possibility of well-posedness in the Sobolev setting are interesting open questions. Our conjecture - based on a formal  parallel with Tollmien-Schlichting instabilities for Navier-Stokes  \cite{GrGuNg} - is that the best exponent  possible should be $\gamma = 3/2$, but such result is for the time being out of reach.  If confirmed, it would emphasize the destabilizing role of viscosity. 

\item We  loose  on  the radius $\tau$ of Gevrey regularity, going from $\tau^0$ to $\tau_1$ in positive time. This loss is very standard \cite{SammartinoCaflisch98a,KukavicaTemamVicolZiane11,KukavicaVicol13,GerardVaretMasmoudi13}.  
\item Besides the Gevrey regularity assumption~\eqref{eq:thm:main:1}, the key assumption is $\inf_\Omega  \pa^2_y u_0 > 0$, which corresponds to a strictly convex initial data. The strict concavity condition
$\sup_\Omega  \pa^2_y u_0 < 0$ would work as well. On the opposite, as discussed before, we do not expect such well-posedness to hold for data with inflexion points~\cite{Renardy09}.
\item The first compatibility condition $\pa_x \int_0^1 u_0 = 0$ is here to ensure that  \eqref{eq:u:compatibility} holds for all time. Note that we can use \eqref{eq:u:compatibility} to determine $\pa_x p$: applying $\pa_x$ to \eqref{eq:HNS:evo}, taking the mean over $y \in (0,1)$, integrating by parts in the term $\int_0^1 v\pa_y u\, dy$, and using the periodic lateral boundary conditions, we find:
\begin{equation} \label{expp}
\pa_x p 
= \tilde \omega\vert_{y=1} - \tilde \omega\vert_{y=0} - \pa_x  \int_0^1 u^2 dy, \qquad x \in \T,
\end{equation}
where $\omega = \pa_y u$ is the vorticity, and we have denoted by
\begin{align}
 \tilde \omega(x,y,t) = \omega(x,y,t) - \int_{\T} \omega(x,y,t) dx, \qquad y \in \{0,1\},
 \label{eq:tilde:omega}
\end{align}
the zero mean (in $x$) boundary vorticity. We will use the notation \eqref{eq:tilde:omega} throughout the paper. Note that for $y \in \{0,1\}$, the functions $\omega$ and $\tilde \omega$ only differ by a function of time.
\item The second and third compatibility conditions  can be explained as follows. Most of our analysis relies on the control of the vorticity $\omega = \pa_y u$. We notably need some  bound on  $\sup_{t \in [0,T]} \| \omega\|_{\gamma,\tau}$ for $\tau \in [\tau_1,\tau^0)$ . If we leave aside the Gevrey regularity in $x$,  this corresponds to an  $L^\infty_t H^1_y$ bound on $u$. As $u$ satisfies a heat type equation with Dirichlet condition,  it is well-known that such an $L^\infty_t H^1_y$ bound requires the compatibility condition $u\vert_{t=0}\vert_{y=0,1} =  u\vert_{y=0,1} \vert_{t=0}$. In view of \eqref{eq:HNS:incompressible}, this amounts to the second compatibility condition of the theorem: $u_0\vert_{y=0,1} = 0$. 
 
 \mspace
Similarly,  the last compatibility condition is  related to the fact that we need a bound for   $\sup_{t \in [0,T]} \| \pa_t \omega\|_{\gamma, \tau}$ for $\tau \in [\tau_1,\tau^0)$.  More precisely, this condition can be derived from the system obeyed by $\omega = \pa_y u$, which is: 
 \begin{subequations}
\label{eq:HNS:vort}
\begin{align}
\pa_t \omega + u \pa_x \omega + v \pa_y \omega  -   \pa^2_y \omega  = 0, \quad (x,y) \in \T \times (0,1), &
\label{eq:HNS:vort:evo}\\
\pa_y \omega\vert_{y=0,1}   = \tilde \omega\vert_{y=1} - \tilde \omega\vert_{y=0} - \pa_x  \int_0^1 u^2 dy. &  \label{eq:HNS:vort:BC}
\end{align}
\end{subequations}
Indeed, \eqref{eq:HNS:vort:evo} follows from differentiating \eqref{eq:HNS:evo} in $y$, while the boundary condition \eqref{eq:HNS:vort:BC}  is obtained by evaluating \eqref{eq:HNS:evo} at $y=0,1$,  using the Dirichlet boundary conditions for $u$ and $v$ in \eqref{eq:HNS:BC}, and the formula for the pressure gradient \eqref{expp}. Now, from \eqref{eq:HNS:vort:evo}, it appears that an  $L^\infty_t L^2_y$ control of   $\pa_t \omega$ is similar to an $L^\infty_t L^2_y$ control of $\pa^2_y \omega$, meaning a $L^\infty_t H^1_y$ control of  $\pa_y \omega$. By differentiating \eqref{eq:HNS:vort:evo}, one sees that $\pa_y \omega$ satisfies a heat like equation, and by \eqref{eq:HNS:vort:evo}, it also satisfies  a Dirichlet  type condition. Again, an $L^\infty_t H^1_y$ control requires  $\pa_y \omega\vert_{t=0}\vert_{y=0,1} = \pa_y\omega\vert_{y=0,1}\vert_{t=0}$, which by \eqref{eq:HNS:vort:BC} amounts to  the third compatiblity condition.  
\end{itemize}

\mspace
{\bf General strategy of the proof.} Our analysis is based on the vorticity evolution \eqref{eq:HNS:vort}. We want to benefit from the so-called hydrostatic trick, which consists in establishing $L^2$ estimates for the weighted derivatives $\pa_x^j \omega/\sqrt{\pa_y \omega}$. The difficulty is that these estimates are not compatible with the diffusion  $-\pa^2_y \omega$, which creates boundary terms involving $\pa^j_x \pa_y \omega\vert_{y=0}$. Because of the extra $x$-derivative at the right-hand side of  \eqref{eq:HNS:vort:BC}, one can not close an estimate at the Sobolev level. 

\mspace
To overcome this difficulty, our first idea is to write  $\omega = \omega^{in} + \omega^{bl}$, where $\omega^{bl}$ is a boundary corrector which  solves (approximately): 
\begin{equation*} 
\pa_t \omega^{bl} - \pa^2_y \omega^{bl} = 0, \quad \pa_y \omega^{bl}\vert_{y=0,1} =  - \pa_x  \int_0^1 u^2 dy, 
\end{equation*}
where the right side of the Neumann boundary condition is seen as a given data. With this splitting, the bad term is removed from the Neumann condition on  $\omega^{in}$, so that we may apply the hydrostatic trick to this quantity.  Still, this approach is obviously not enough: the equation for $\omega^{in}$ still  involves $\omega$, either directly or through $\omega^{bl}$, so that no closed estimate is available on  $\omega^{in}$. 

\mspace
This is where we shall take advantage of Gevrey regularity. To explain this point, it is simpler to consider the linearization of \eqref{eq:HNS:vort} around a  shear flow $u = (u_s(y), 0)$: 
\begin{align*}
 \pa_t \omega + u_s \pa_x \omega + u_s'' v - \pa^2_y \omega  = 0, \quad \pa_x u + \pa_y v = 0, \quad 
 \pa_y \omega\vert_{y=0,1}  = \tilde \omega\vert_{y=1}  - \tilde \omega\vert_{y=0} - 2 \pa_x  \int_0^1 u_s u dy. 
\end{align*}
As this system has $x$-independent coefficients, one can Fourier transform in $x$. More precisely, looking for local well-posedness in Gevrey class 
$\gamma$, it is natural to look for solutions in the form $ \omega = e^{k^{1/\gamma} t} e^{i k x} \hat{\omega}_k(t,y)$. We end up with the following system for the boundary layer corrector: 
\begin{equation*} 
(k^{1/\gamma} +  \pa_t) \hat{\omega}^{bl} - \pa^2_y \hat{\omega}^{bl} = 0, \quad \pa_y \hat{\omega}_k^{bl}\vert_{y=0,1} =  - 2 i k   \int_0^1 u_s \hat{u}_k dy.
\end{equation*}
Explicit calculations on this system reveal  that Gevrey regularity in $x$  is converted into spatial localization in $y$: for $k \gg 1$, $\hat{\omega}^{bl}_k$ has a boundary layer behaviour, with concentration near $y=0,1$ at scale $k^{-\frac{1}{2\gamma}}$. Roughly, neglecting the upper boundary, one can think of 
\begin{align*}
 \hat{\omega}^{bl}_k &\approx k^{1-\frac{1}{2\gamma}} W(t,k^{\frac{1}{2\gamma}} y) \int_0^1 u_s \hat{u}_k dy, \\
 \hat{u}^{bl}_k &\approx k^{1-\frac{1}{\gamma}} U(t,k^{\frac{1}{2\gamma}} y) \int_0^1 u_s \hat{u}_k dy. 
 \end{align*}
Now, the idea is to write 
$$ \int_0^1 u_s \hat{u}_k dy = \int_0^1 u_s  \hat{u}^{bl}_k  \: +  \:    \int_0^1 u_s  \hat{u}^{in}_k  =  \left( k^{1-\frac{1}{\gamma}}  \int_0^1 u_s(y) U(t,k^{\frac{1}{2\gamma}} y) dy \right)  \int_0^1 u_s \hat{u}_k dy \: + \:   \int_0^1 u_s  \hat{u}^{in}_k.  $$
In short, one can check that for $\gamma \le 2$,  we have $ k^{1-\frac{1}{\gamma}}  \int_0^1 u_s(y) U(t,k^{\frac{1}{2\gamma}} y) dy  = o(1)$ in the limit of large $k$, so that the first term at the right-hand side can be absorbed in the left-hand side. This leads to a control of $\int_0^1 u_s u $, and thus  of $\omega^{bl}$, in terms of $\omega^{in}$. From there,  one can get closed estimates on $\omega^{in}$. 

\mspace
Of course, this strategy is made more difficult when dealing with the $x$-dependent and  nonlinear system \eqref{eq:HNS:vort}. In particular, the Fourier approach is no longer convenient, and we must use the characterization of Gevrey regularity  in the physical space, through the family $\{ \pa^j_x \omega\}_{j\in \N}$. In order to take advantage of the boundary layer phenomenon,  we shall introduce Gevrey norms with extra-weight $(j+1)^r$, see \eqref{eq:Mj:def}. The boundary layer phenomenon will be reflected by the fact that multiplication by  $y$ or integration in $y$ will generate a gain in the exponent $r$, see Lemma~\ref{lem:BL:1}. Such gain will make possible the control of boundary layer quantities by $\omega^{in}$, cf.~Lemma~\ref{lem:h:omega:in}. 

\mspace
From there, the analysis will focus on weighted estimates for $\omega^{in}$, using the hydrostatick trick. As usual in nonlinear problems, these estimates will be obtained conditionally to certain bounds (notably a lower bound on $\pa_y \omega$, to benefit from convexity). We will show that such bounds are preserved in small time, which will require estimates on the time derivative $\pa_t \omega$, as well as maximum principle arguments for $\pa_y \omega$.

\section{Preliminaries} \label{sec:preliminaries}

 As usual in this kind of analysis, we will focus on {\it a priori} estimates. This means that from Section \ref{sec:preliminaries} to Section \ref{sec:max}, we will assume implicitly that we  already have a solution of \eqref{eq:HNS} on $[0,T]$ with all necessary smoothness, and we will collect properties and estimates about this solution. Only in Section \ref{sec:proof:main} will we describe the way of constructing solutions.

\subsection{Norms and notation}
Let $\gamma \ge 1$, $r \in \R$, $\tau > 0$.  We introduce a refined two-dimensional Gevrey norm 
\begin{align}
\norm{f}_{\gamma,r,\tau}^2 = \sum_{j\geq 0} M_j^2   \norm{\pa_x^j f}_{L^2_{x,y}(\T \times [0,1])}^2 ,\qquad \mbox{where} \qquad M_j = \frac{(j+1)^r \tau^{j+1}}{(j!)^\gamma}.
\label{eq:Mj:def}
\end{align}
Note that the $L^2$ norm in space is only used on $\Omega=  \T \times[0,1]$, although the functions may be defined on the half-space $\T \times [0,\infty)$. We note that if $r' \geq r$ then $\norm{\cdot}_{\gamma,r',\tau} \geq \norm{\cdot}_{\gamma,r,\tau}$. 

\mspace
For functions which are independent of the $y$ variable, we use the one-dimensional counterpart
\begin{align*}
\abs{f}_{\gamma,r,\tau}^2 = \sum_{j\geq 0} M_j^2 \norm{\pa_x^j f}_{L^2_{x}(\T)}^2 ,
\end{align*}
where $M_j$ is defined as before. Similarly, if $r' \geq r$ then $\abs{\cdot}_{\gamma,r',\tau} \geq \abs{\cdot}_{\gamma,r,\tau}$. 

\mspace
Let $\tau^0$, $\tau_1$ as in the theorem, and let  $\tau_0$ such that $\tau^0 > \tau_0 > \tau_1$. Throughout the paper, the Gevrey-class radius $\tau$ will be defined by
\begin{align}
\tau(t) = \tau_0 \exp(-\beta t), \quad 
\label{eq:tau:def}
\end{align}
where $\beta\geq 1$, $t \in [0,T]$, and $T$  always small enough so that $\tau(t) \ge \tau_1$. In particular $\dot \tau(t) = - \beta \tau(t)$. 

\mspace
We will use $a \les b$ to denote the existence of a constant $C>0$, which may depend only on $\gamma,\tau_0,\tau_1$, and $r$, such that $a \leq C b$. Similarly, will use $a \ll b$ to denote the existence of a sufficiently large constant $C>0$, which may depend only on $\gamma,\tau_0,\tau_1$, and $r$, such that $C a \leq b$.

\mspace
For any function $f$ we use the notation
\begin{align}
 f_j &= M_j \pa_x^j f
 \label{eq:subindex:j}
\end{align}
where $M_j$ is defined in~\eqref{eq:Mj:def} and depends on $r,\gamma$, and $\tau$. With this notation we have
\begin{align*}
\norm{f}_{\gamma,r,\tau}^2 = \sum_{j\geq 0} \norm{f_j}_{L^2_{x,y}}^2 \qquad \mbox{and} \qquad 
\abs{f}_{\gamma,r,\tau}^2 = \sum_{j\geq 0} \norm{f_j}_{L^2_{x}}^2.
\end{align*}

\subsection{A boundary layer lift}
The boundary condition \eqref{eq:HNS:vort:BC} in the vorticity evolution \eqref{eq:HNS:vort} motivates the introduction of a boundary layer lift for the the vorticity, which we describe next. Throughout the paper we appeal to Gevrey estimates for the system
\begin{subequations}
\label{eq:flat}
\begin{align}
(\pa_t - \pa_y^2) \omega^{\flat} &= 0 \\
(\pa_y\omega^{\flat} + 2 \omega^{\flat})\vert_{y=0} &= \pa_x h\vert_{y=0}\\
\omega^{\flat}\vert_{t=0} &=0 \label{eq:flat:c}
\end{align}
\end{subequations}
posed for $t \in [0,T]$, $x\in \T$, and $y \in \R_+$. Here $h$ is a placeholder for $- \left( \int_0^1 u^2\, dy - \int_\T\int_0^1 u^2\, dy dx\right)$. Since the boundary datum for $\omega^\flat$ is a pure $x$ derivative (and this is the only nontrivial datum), we note that \eqref{eq:flat} immediately implies that $\int_\T \omega^\flat (x,y,t) dx = 0$, for any $y\geq 0$. We also define 
\begin{align}
 u^{\flat}(x,y) &= \int_{+\infty}^y \omega^\flat(x,z) dz \\
 v^{\flat}(x,y) &= \int^{+\infty}_y \pa_x u^\flat(x,z) dz. \label{def:vflat}
 \end{align}

\begin{lemma}
\label{lem:BL:1}
Let  $r \in \R$, $\beta \ge 1$ and   $T>0$ such that $\tau(t) \geq \tau_1$ for $t \in [0,T]$. The boundary layer vorticity $\omega^\flat$ obeys
\begin{subequations}
\begin{align}
 \int_0^t \norm{\omega^\flat(s)}_{\gamma,r,\tau(s)}^2 ds  &\les \frac{1}{\beta^{3/2}} \int_0^t \abs{h(s)}_{\gamma,r+\gamma-\frac{3}{4},\tau(s)}^2 ds \label{eq:omega:flat:Gevrey} \\
  \int_0^t \norm{y \, \omega^\flat(s)}_{\gamma,r,\tau(s)}^2 ds  &\les \frac{1}{\beta^{5/2}}  \int_0^t \abs{h(s)}_{\gamma,r+\gamma-\frac{5}{4},\tau(s)}^2 ds \label{eq:y:omega:flat:Gevrey}\\
\int_0^t \norm{\pa_y  \omega^\flat(s)}_{\gamma,r,\tau(s)}^2 ds  &\les \frac{1}{\beta^{1/2}}  \int_0^t \abs{h(s)}_{\gamma,r+\gamma-\frac{1}{4},\tau(s)}^2 ds \label{eq:dy:omega:flat:Gevrey}\\
    \int_0^t \norm{y \pa_y \omega^\flat(s)}_{\gamma,r,\tau(s)}^2 ds  &\les \frac{1}{\beta^{3/2}}  \int_0^t \abs{h(s)}_{\gamma,r+\gamma-\frac{3}{4},\tau(s)}^2 ds 
    \label{eq:y:dy:omega:flat:Gevrey}\\
       \int_0^t \abs{\omega^\flat(s)\vert_{y=1}}_{\gamma,r,\tau(s)}^2 ds  &\les \frac{1}{\beta^{20}} \int_0^t \abs{h(s)}_{\gamma,r+\gamma-10,\tau(s)}^2 ds 
   \label{eq:omega:flat:y=1}\\
  \int_0^t \abs{\pa_y \omega^\flat(s)\vert_{y=1}}_{\gamma,r,\tau(s)}^2 ds  &\les \frac{1}{\beta^{20}}  \int_0^t \abs{h(s)}_{\gamma,r+\gamma-10,\tau(s)}^2 ds, 
  \label{eq:dy:omega:flat:y=1}
  \end{align}
  \end{subequations}
the boundary layer velocity $u^\flat$ obeys
  \begin{subequations}
  \begin{align}
 \int_0^t \norm{u^\flat(s)}_{\gamma,r,\tau(s)}^2 ds  &\les \frac{1}{\beta^{5/2}}  \int_0^t \abs{h(s)}_{\gamma,r+\gamma-\frac{5}{4},\tau(s)}^2 ds 
 \label{eq:u:flat:Gevrey} \\
  \int_0^t \norm{y u^\flat(s)}_{\gamma,r,\tau(s)}^2 ds  &\les \frac{1}{\beta^{7/2}}  \int_0^t \abs{h(s)}_{\gamma,r+\gamma-\frac{7}{4},\tau(s)}^2 ds ,
  \label{eq:y:u:flat:Gevrey}\\
  \int_0^t \abs{ u^\flat(s)\vert_{y=\frac{1}{2}}}_{\gamma,r,\tau(s)}^2 ds  &\les \frac{1}{\beta^{20}}  \int_0^t \abs{h(s)}_{\gamma,r+\gamma-10,\tau(s)}^2 ds, 
  \label{eq:u:flat:y=12}
  \end{align}
  \end{subequations}
  and the boundary layer velocity $v^\flat$ satisfies
  \begin{subequations}
  \begin{align}
 \int_0^t \norm{v^\flat(s)}_{\gamma,r,\tau(s)}^2 ds  &\les \frac{1}{\beta^{7/2}}  \int_0^t \abs{h(s)}_{\gamma,r+2\gamma-\frac{7}{4},\tau(s)}^2 ds 
 \label{eq:v:flat:Gevrey}\\
 \int_0^t \abs{v^\flat\vert_{y=0}(s)}_{\gamma,r,\tau(s)}^2 ds  &\les \frac{1}{\beta^{3}}  \int_0^t \abs{h(s)}_{\gamma,r+2\gamma-\frac{3}{2},\tau(s)}^2 ds    \label{eq:v:flat:y=0} \\
  \int_0^t \abs{v^\flat\vert_{y=1}(s)}_{\gamma,r,\tau(s)}^2 ds  &\les \frac{1}{\beta^{20}}  \int_0^t \abs{h(s)}_{\gamma,r+\gamma-10,\tau(s)}^2 ds 
   \label{eq:v:flat:y=1}
\end{align}
\end{subequations}
for all $t\in [0,T]$.
\end{lemma}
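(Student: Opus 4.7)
The plan is to reduce all the estimates to a single damped-heat bound on the rescaled quantities $\omega^\flat_j := M_j(t) \pa_x^j \omega^\flat$. Applying $M_j \pa_x^j$ to \eqref{eq:flat}, and using that $M_j$ depends on $t$ only through $\tau$ so that $\dot M_j/M_j = (j+1) \dot\tau/\tau = -\beta (j+1)$, one sees that $\omega^\flat_j$ solves
\begin{equation*}
(\pa_t - \pa_y^2 + \beta(j+1)) \omega^\flat_j = 0, \qquad (\pa_y + 2)\omega^\flat_j\vert_{y=0} = g_j, \qquad \omega^\flat_j\vert_{t=0} = 0
\end{equation*}
on $\T \times \R_+$, with forcing $g_j(t,x) := M_j(t) \pa_x^{j+1} h(t,x)$. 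The friction coefficient $\beta(j+1)$ is the key: it imposes a boundary layer of scale $(\beta(j+1))^{-1/2}$ on the profile of $\omega^\flat_j$ in $y$, and this scale is the single mechanism behind every gain appearing in the lemma.

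The core step is the base bound
\begin{equation*}
\int_0^t \norm{\omega^\flat_j(s)}_{L^2_{x,y}}^2 ds \les (\beta(j+1))^{-3/2} \int_0^t \norm{g_j(s)}_{L^2_x}^2 ds,
\end{equation*}
which I would derive by a Fourier transform in $x$ and a Laplace transform in $t$: the explicit Fourier--Laplace solution is $\tilde\omega^\flat_j(\lambda, y) = -\tilde g_j(\lambda) e^{-\mu y}/(\mu - 2)$ with $\mu = \sqrt{\lambda + \beta(j+1)}$ and $\Re\mu > 0$, so Plancherel together with $|\mu - 2|^2 \, (2\Re\mu) \gtrsim |\mu|^3 \gtrsim (\beta(j+1))^{3/2}$ (valid for $\beta$ large enough, or trivially verified for small $j$) immediately produces the claim. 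An $L^2$ energy estimate works equally well: test against $\omega^\flat_j$, integrate by parts, and absorb the boundary term via $|\phi(0)|^2 \leq 2 \norm{\phi}_{L^2_y} \norm{\pa_y \phi}_{L^2_y}$ into the dissipation and the friction $\beta(j+1)\norm{\omega^\flat_j}^2$. Writing $g_j = (M_j/M'_{j+1}) h_{j+1}$, where $M'_j = (j+1)^{r'}\tau^{j+1}/j!^\gamma$ is the weight of the norm $|\cdot|_{\gamma,r',\tau}$, one checks $M_j/M'_{j+1} \sim (j+1)^{r + \gamma - r'}/\tau$; combined with the $(\beta(j+1))^{-3/2}$ gain from friction, summation in $j$ produces \eqref{eq:omega:flat:Gevrey} for the choice $r' = r + \gamma - 3/4$.

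Every remaining estimate follows from the same Fourier--Laplace template by reading off how each operator acts on the profile $e^{-\mu y}$. Multiplication by $y$ gains $(\beta(j+1))^{-1/2}$ in the $L^2_y$ norm, giving \eqref{eq:y:omega:flat:Gevrey}; applying $\pa_y$ costs one factor $(\beta(j+1))^{1/2}$, giving \eqref{eq:dy:omega:flat:Gevrey}; combining the two yields \eqref{eq:y:dy:omega:flat:Gevrey}. The traces at $y=1$ in \eqref{eq:omega:flat:y=1}--\eqref{eq:dy:omega:flat:y=1} are controlled by the exponential factor $|e^{-\mu}| \leq e^{-c\sqrt{\beta(j+1)}}$, which beats any polynomial loss in $r$, so that the stated $\beta^{-20}$ and shift $\gamma - 10$ are merely a convenient extraction from that exponential. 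The velocity bounds \eqref{eq:u:flat:Gevrey}--\eqref{eq:u:flat:y=12} come from the observation that each $y$-integration divides the Fourier--Laplace solution by one more power of $\mu$, which in the $L^2_y$ norm is a further $(\beta(j+1))^{-1/2}$ on top of the bound for $\omega^\flat$.

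For $v^\flat$ the only genuinely new feature is the additional $\pa_x$ inside the $y$-integral: it shifts the Fourier index $j \mapsto j+1$, introducing the extra factor $M_j/M_{j+1} \sim (j+1)^\gamma/\tau$ in the weight ratio and thereby replacing the $\gamma$ in the exponent shift by $2\gamma$, which produces \eqref{eq:v:flat:Gevrey}. For \eqref{eq:v:flat:y=0} I compute $v^\flat\vert_{y=0} = \int_0^\infty \pa_x u^\flat\, dy$ explicitly, whose Fourier--Laplace value is $\sim ik \tilde g/(\mu^2(\mu - 2))$; integrating in $\lambda$ and summing in $k$ gives the $\beta^{-3}$ with shift $r + 2\gamma - 3/2$, while \eqref{eq:v:flat:y=1} again benefits from the exponential smallness of the trace at a positive distance from $y=0$. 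The whole proof is then a bookkeeping exercise matching half-integer powers of $\beta(j+1)$ against the shifts in $r$ on $h$; the only real obstacle is to keep that bookkeeping straight, and no new PDE idea is required beyond the single damped-heat Laplace estimate above.
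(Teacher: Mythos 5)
Your proposal is correct and takes essentially the same route as the paper: there too one passes to the damped heat problem \eqref{eq:omflatj1}--\eqref{eq:omflatj3} for $\omega^\flat_j$, solves it explicitly in Fourier variables (the paper extends the boundary datum by zero in time and transforms in $t$, which on the imaginary axis is the same as your Laplace transform), and reads off every gain and every shift in $r$ from the profile $e^{-y\sqrt{\beta(j+1)+i\zeta}}\big/\bigl(2-\sqrt{\beta(j+1)+i\zeta}\bigr)$ by exactly the bookkeeping you describe, including the extra $\pa_x$ for $v^\flat$ and the exponential smallness of the traces at $y=1$. The only points your sketch glosses over, which the paper handles explicitly, are the identification of the transform-defined solution with the actual $\omega^\flat_j$ on $[0,T]$ (Lemma~\ref{lemma_Fourier}, proved in Appendix~\ref{appendA}) and the truncation-of-data/causality argument needed so that the right-hand sides involve $\int_0^t$ rather than $\int_0^T$ for every $t\in[0,T]$ (your energy-estimate variant handles this automatically for the base bound, but not for the weighted and trace estimates); also note that, like you, the paper implicitly requires $\beta$ large (namely $\beta>4$, cf.\ \eqref{lowerbound_Fourier}) to keep the denominator away from zero.
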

\begin{proof}[Proof of Lemma~\ref{lem:BL:1}]
In view of \eqref{eq:tau:def}, \eqref{eq:subindex:j}, and \eqref{eq:flat}, the function $\omega^\flat_j = M_j \pa_x^j \omega^\flat$ obeys equations
\begin{subequations} 
\begin{align}
\label{eq:omflatj1} (\pa_t + \beta (j+1) - \pa_y^2) \omega^{\flat}_j &= 0 \\ 
\label{eq:omflatj2} (\pa_y\omega^{\flat}_j + 2 \omega^{\flat}_j )\vert_{y=0} &= \pa_x h_j\vert_{y=0} = \frac{M_j}{M_{j+1}} h_{j+1}\\
\label{eq:omflatj3} \omega^{\flat}_j\vert_{t=0} &=0. 
\end{align}
\end{subequations}
For fixed $x \in \T$ we define $f_{j}(x,t) = \frac{M_j}{M_{j+1}} h_{j+1}(x,t)$ for $t\in[0,T]$, and $f_{j}(x,t) = 0$ for $t \in \R \setminus [0,T]$. Pointwise in $x$ and $y$ we take a Fourier transform in time and solve in $L^2(\R_t \times \T_x \times \R_{y}^+)$ the equation 
\begin{subequations}
\begin{align*}
(\pa_t + \beta (j+1) - \pa_y^2) \bar \omega^{\flat}_j &= 0 \\
(\pa_y \bar\omega^{\flat}_j + 2 \bar \omega^{\flat}_j )\vert_{y=0} &= f_j.
\end{align*}
\end{subequations}
The solution is obtained by taking the inverse Fourier transform in time (we let $\zeta$ denote the dual Fourier variable to $t$) of the function 
\begin{equation} \label{Fourier_omega} 
\hat{\bar\omega}^\flat_j(\zeta,x,y) = \frac{\hat f_j(\zeta,x)}{2 - \sqrt{\beta (j+1) + i \zeta}} e^{- y \sqrt{\beta (j+1)+i\zeta}}.
\end{equation}
We implicitly assume here that $\beta > 4$ so that for all $j \in \N$, for all $\zeta$ with $\Im \zeta \le 0$, 
\begin{equation} \label{lowerbound_Fourier}
  | 2 - \sqrt{\beta (j+1) + i \zeta} |  \ge |\sqrt{\beta (j+1) + i \zeta} | - 2 \ge     \sqrt{\beta(j+1) - \Im \zeta} -2 \ge \sqrt{\beta} - 2 > 0. 
\end{equation}
We will make a crucial use of 
\begin{lemma} \label{lemma_Fourier} 
 The following two properties hold 
\begin{itemize}
\item  $\bar \omega_j^\flat \equiv 0$ for $t <0$. 
\item  $\bar \omega_j^\flat \equiv \omega_j^\flat$ for $t\in [0,T]$. 
\end{itemize}
\end{lemma}
\noindent
The proof is postponed to Appendix \ref{appendA}. This lemma will allow us to use the explicit formula \eqref{Fourier_omega} to obtain estimates on $\omega_j^\flat$, starting with  \eqref{eq:omega:flat:Gevrey}-\eqref{eq:dy:omega:flat:y=1}. 

\mspace
Let us detail the derivation of \eqref{eq:omega:flat:Gevrey}.  A simple calculation based on \eqref{Fourier_omega} yields 
$$ \|  \hat{\bar\omega}^\flat_j \|^2_{L^2_{\zeta,x,y}} \le \frac{C}{(\beta (j+1))^{3/2}} \|  \hat{f}_j \|^2_{L^2_{\zeta,x}}    \, $$
for a constant $C$ independent of $j$ (and obviously from $T$, which is only involved in the definition of $f_j$). By Plancherel formula in time: 
\begin{equation} \label{estim_overomj}
     \|  \bar{\omega}^\flat_j \|^2_{L^2_{t,x,y}} \le \frac{C}{(\beta (j+1))^{3/2}} \|  f_j \|^2_{L^2_{t,x}} =   \frac{C}{\beta (j+1)^{3/2}}  \left(\frac{M_j}{M_{j+1}}\right)^2 \int_0^T  \|  h_{j+1}(s) \|_{L^2_x}^2  \, ds   
\end{equation}
This implies (by the second item of Lemma \ref{lemma_Fourier})
$$  \int_0^T \|  \omega^\flat_j(s) \|^2_{L^2_{x,y}}  ds \le  \frac{C'}{\beta^{3/2}} (j+1)^{2\gamma - \frac{3}{2}}   \int_0^T  \|  h_{j+1}(s) \|_{L^2_x}^2  \, ds  $$
Multiplying by $(j+1)^{2r}$ and summing over $j$, we obtain the inequality \eqref{eq:omega:flat:Gevrey} {\em in the special case $t=T$}.  For the general case $t \in (0,T)$, the idea is to slightly modify $\overline{\omega}_j^\flat$.  Namely, instead of extending $\frac{M_j}{M_{j+1}} h_{j+1}$ by zero outside $(0,T)$, and then solving the heat equation with the extension $f_j$ as a boundary data, we extend $\frac{M_j}{M_{j+1}} h_{j+1}\vert_{(0,t)}$ by zero outside $(0,t)$. We then solve the heat equation  with this modified boundary data $f_j^t$, which is zero outside $(0,t)$, resulting in  a new $\overline{\omega}_j^{\flat,t}$. Obviously,  Lemma \ref{lemma_Fourier} and the previous calculation remain true with $T$ replaced by $t$, $\overline{\omega}_j^\flat$ replaced by $\overline{\omega}_j^{\flat,t}$. 
This yields \eqref{eq:omega:flat:Gevrey}. Inequalities  \eqref{eq:y:omega:flat:Gevrey} to   \eqref{eq:y:u:flat:Gevrey} follow very similar arguments, that we skip for brevity. 

\mspace
In the case of \eqref{eq:v:flat:Gevrey}, we need to take into account one more $x$-derivative. A simple calculation yields (with obvious notations):
\begin{equation*} 
\|  \hat{\bar{v}}^\flat_j \|^2_{L^2_{\zeta,x,y}} \le \frac{C}{(\beta (j+1))^{7/2}} \|  \pa_x \hat{f}_j \|^2_{L^2_{\zeta,x}}    \, 
\end{equation*}
The extra factor of $(\beta (j+1))^2$ at the denominator compared to \eqref{estim_overomj} comes from taking two antiderivatives in $y$, while $\hat{f_j}$ is replaced by  $\pa_x \hat{f}_j$ due to the extra $x$-derivative in \eqref{def:vflat}. It follows that 
$$  \int_0^T \|  v^\flat_j(s) \|^2_{L^2_{x,y}}  ds \le  \frac{C}{\beta^{7/2}} (j+1)^{2\gamma - \frac{7}{2}}   \int_0^T  \|  \pa_x h_{j+1}(s) \|_{L^2_x}^2  \, ds  $$
and using that $|\pa_x h_{j+1}| \lesssim   \frac{M_{j+1}}{M_{j+2}} |h_{j+2}| \lesssim (j+2)^\gamma |h_{j+2}|$, we get 
$$   \int_0^T \|  v^\flat_j(s) \|^2_{L^2_{x,y}}  ds \le  \frac{C}{\beta^{7/2}} (j+1)^{4\gamma - \frac{7}{2}}   \int_0^T  \|  h_{j+2}(s) \|_{L^2_x}^2  \, ds.  $$
Multiplying by $(j+1)^{2r}$ and summing over $j$ yields \eqref{eq:v:flat:Gevrey} for $t=T$, while the case of an arbitrary time $t$ is treated with  the modification explained above. 
The pointwise estimate \eqref{eq:v:flat:y=0}, taken at $y=0$, follows from the inequality 
$$ \|  \hat{\bar{v}}^\flat_j\vert_{y=0} \|^2_{L^2_{\zeta,x}} \le  \frac{C}{(\beta (j+1))^{3}}  \|  \pa_x \hat{f}_j \|^2_{L^2_{\zeta,x}}. $$
The pointwise estimates \eqref{eq:dy:omega:flat:y=1}, \eqref{eq:u:flat:y=12}, and \eqref{eq:v:flat:y=1}, taken at $y=1$ or $y=1/2$ are much better: all boundary layer terms taken at $y=1$ contain an exponential factor $e^{-\sqrt{\beta(j+1) + i \xi}}$ which allows to gain an arbitrary number of powers of $\beta j$ (which explains the arbitrary factor $\frac{1}{\beta^{20}}$ and the index $r-\gamma-10$). 
\end{proof}

\begin{lemma}
\label{lem:BL:2}
Let  $r \in \R$, $\beta \ge 1$ and   $T>0$ such that $\tau(t) \geq \tau_1$ for $t \in [0,T]$. We have
\begin{subequations}
\begin{align}
\sup_{[0,t]} \norm{\omega^\flat(s)}_{\gamma,r,\tau(s)}^2  &\les \frac{1}{\beta^{1/2}}  \int_0^t \abs{h(s)}_{\gamma,r+\gamma-\frac{1}{4},\tau(s)}^2 ds 
\label{eq:lem:BL:2:a}
\end{align}
for all $t \in [0,T]$. 
\end{subequations}
\end{lemma}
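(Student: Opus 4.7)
The plan is to imitate the Fourier-in-time argument from the proof of Lemma~\ref{lem:BL:1}, but to use Cauchy--Schwarz in the dual frequency variable $\zeta$ in lieu of Plancherel. This exchanges $L^2_t$ control for $L^\infty_t$ control at the expected price of half a power of $\beta(j+1)$.

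Fix $t \in [0,T]$ and proceed as in Lemma~\ref{lem:BL:1}: replace $f_j$ by its truncation $f_j^t = f_j\,\mathbf{1}_{[0,t]}$ and let $\bar\omega_j^{\flat,t}$ be the function defined by the explicit Fourier formula \eqref{Fourier_omega} with $f_j$ replaced by $f_j^t$. Then Lemma~\ref{lemma_Fourier} (applied on $[0,t]$) gives $\bar\omega_j^{\flat,t}(t,\cdot,\cdot) = \omega_j^\flat(t,\cdot,\cdot)$. Fourier inversion in time and the triangle inequality then yield
$$ \|\omega_j^\flat(t)\|_{L^2_{x,y}} \leq \frac{1}{2\pi} \int_\R \|\hat{\bar\omega}_j^{\flat,t}(\zeta,\cdot,\cdot)\|_{L^2_{x,y}}\, d\zeta. $$

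The explicit formula \eqref{Fourier_omega}, together with the elementary lower bounds
$$ |2 - \sqrt{\beta(j+1)+i\zeta}|^2 \gtrsim \beta(j+1)+|\zeta|, \qquad 2\,\Re\sqrt{\beta(j+1)+i\zeta} \gtrsim (\beta(j+1)+|\zeta|)^{1/2}, $$
valid for $\beta \geq 16$ and any $\zeta \in \R$ (a slight refinement of \eqref{lowerbound_Fourier}), and the computation $\int_0^\infty e^{-2y\Re\sqrt{\cdot}}\, dy = (2\Re\sqrt{\cdot})^{-1}$, lead to $\|\hat{\bar\omega}_j^{\flat,t}(\zeta)\|_{L^2_{x,y}}^2 \les \|\hat f_j^t(\zeta)\|_{L^2_x}^2 / (\beta(j+1) + |\zeta|)^{3/2}$. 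Applying Cauchy--Schwarz in $\zeta$, the elementary identity $\int_\R (a+|\zeta|)^{-3/2}\, d\zeta = 4 a^{-1/2}$, and Plancherel in $t$ then give
$$ \|\omega_j^\flat(t)\|_{L^2_{x,y}}^2 \les \frac{1}{(\beta(j+1))^{1/2}} \|f_j^t\|_{L^2_{t,x}}^2. $$

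The remainder is the bookkeeping already present in the proof of Lemma~\ref{lem:BL:1}: bound $\|f_j^t\|_{L^2_{t,x}}^2 \les (j+1)^{2\gamma} \int_0^t \|h_{j+1}(s)\|_{L^2_x}^2\, ds$ using $M_j(s)/M_{j+1}(s) \les (j+1)^\gamma$ (consequence of $\tau(s) \geq \tau_1$), identify $(j+2)^{2\gamma - 1/2}\|h_{j+1}\|_{L^2_x}^2$ with the squared $(j+1)$-th coefficient $\|\tilde h_{j+1}\|_{L^2_x}^2$ of the Gevrey norm $|h|_{\gamma, r+\gamma-1/4, \tau}$, and sum over $j \geq 0$; the $(j+1)^{2\gamma - 1/2}$ factor is then exactly absorbed, and taking the supremum over $t' \in [0,t]$ yields \eqref{eq:lem:BL:2:a}. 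I do not foresee any serious obstacle: the shift $\gamma - 1/4$ in the Gevrey index on the right-hand side is forced by balancing the $(j+1)^{2\gamma}$ loss coming from the $\pa_x$ in the boundary datum against the $(j+1)^{1/2}$ loss arising from Cauchy--Schwarz in $\zeta$.
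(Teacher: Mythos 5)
Your argument is correct and is essentially the paper's own proof of Lemma~\ref{lem:BL:2}: both rest on the explicit formula \eqref{Fourier_omega}, an $L^1_\zeta(L^2_{x,y})$ bound on $\hat{\bar\omega}^\flat_j$ obtained by Cauchy--Schwarz in $\zeta$ (trading Plancherel's $L^2_t$ control for an $L^\infty_t$ bound at the cost of $(\beta(j+1))^{-1/4}$ instead of $(\beta(j+1))^{-3/4}$), the factor $M_j/M_{j+1}\les (j+1)^\gamma$ to shift the Gevrey index to $r+\gamma-\tfrac14$, and the same truncation-of-$h$ device to handle arbitrary $t\in[0,T]$. The only differences are cosmetic (you state the kernel bound as $(\beta(j+1)+|\zeta|)^{-3/2}$, which in fact cleans up some exponent typos in the paper's displayed intermediate steps, and you require $\beta\geq 16$ where the paper implicitly assumes $\beta>4$), so there is nothing further to add.
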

\begin{proof}[Proof of Lemma~\ref{lem:BL:2}]
In order to establish the estimate \eqref{eq:lem:BL:2:a},  we rely on the explicit formula \eqref{Fourier_omega}, which gives an $L^1$ control of the Fourier transform: 
\begin{align*}
 \| \hat{\bar{\omega}}_j^\flat \|_{L^1_\zeta(L^2_x,y)} & \lesssim \int_\R \frac{1}{|\sqrt{\beta(j+1) + i \zeta} - 2|} \left( \int_{\R_+} \int_\T  \left| e^{-2y\sqrt{\beta(j+1) + i \zeta}}\right| \, |\hat{f}_j(\zeta,x)|^2 dx dy \right)^{1/2} d\zeta \\
 & \lesssim  \int_\R \frac{1}{|\sqrt{\beta(j+1) + i \zeta}|^{3/4}}  \left(  \int_\T  |\hat{f}_j(\zeta,x)|^2 dx\right)^{1/2}  d\zeta \\
 & \lesssim  \left( \int_\R \frac{1}{|\sqrt{\beta(j+1) + i \zeta}|^{3/2}} d\zeta \right)^{1/2}   \left( \int_\R \int_\T    |\hat{f}_j(\zeta,x)|^2 dx d\zeta \right)^{1/2} \\
 & \lesssim \frac{1}{(\beta(j+1))^{1/4}} \, \left( \int_\R \int_\T    |\hat{f}_j(\zeta,x)|^2 dx d\zeta \right)^{1/2}.
  \end{align*}
This implies that 
$$ \sup_{t \in \R} \| \overline{\omega}_j^\flat(t) \|_{L^2_{x,y}} \lesssim  \frac{1}{(\beta(j+1))^{1/4}} \,  \left( \int_\R \int_\T |f_{j+1}(t,x)|^2 dt \right)^{1/2}  $$
Restricting the left-hand side to the supremum over $(0,T)$, we get 
$$ \sup_{t \in (0,T)} \| \omega_j^\flat(t) \|^2_{L^2_{x,y}} \lesssim  \frac{1}{(\beta(j+1))^{-2\gamma+1/2}} \,  \int_0^T \int_\T \|h_{j+1}(t,x)|^2 dt.  $$
Multiplying by $(j+1)^{2r}$ and summing over $j$, we get \eqref{eq:lem:BL:2:a} for $t=T$. The general case of $t \in (0,T)$ is treated as in the proof of Lemma \ref{lem:BL:1}. 
\end{proof}

\subsection{The interior vorticity controls the boundary layer lift}
So far, we have only focused on the lower boundary layer lift, which is very small near $y=0$.  We introduce the notation
\begin{subequations}
\label{eq:bl:def}
\begin{align}
\ombl(x,y,t) &=   {\omega^\flat(x,y,t)  - \omega^\flat(x,1-y,t)}\\
\ubl(x,y,t) &=   {u^\flat(x,y,t) + u^\flat(x,1-y,t) }\\
\vbl(x,y,t) &= - \int_0^y \pa_x \ubl(x,z,t) dz
\end{align}
\end{subequations}
to denote the cumulative boundary layer profile, and
\begin{subequations}
\label{eq:in:def}
\begin{align}
\omi(x,y,t) &= \omega(x,y,t) - \ombl(x,y,t) \\
\uin(x,y,t) &= u(x,y,t) - \ubl(x,y,t) \\
\vin(x,y,t) &= v(x,y,t) - \vbl(x,y,t)
\end{align}
\end{subequations}
to denote the interior vorticity, horizontal velocity component, and vertical velocity component. In view of \eqref{eq:subindex:j}, \eqref{eq:bl:def} and \eqref{eq:in:def} also define the objects $\ombl_j, \ubl_j, \vbl_j$ in terms of the function $h$, and $\omi_j,\uin_j,\vin_j$ in terms of $h$ and $\omega$.

\begin{lemma}
\label{lem:h:omega:in}
Let  $\gamma \in [1, 5/4]$, ${r > 2\gamma+2}$, $M > 0$.  
Assume $\omega = \pa_y u$ is such that 
\begin{align}
\sup_{[0,T]}\norm{\omega(t)}_{\gamma,{\frac{r}{4}},\tau(t)} \leq M
\label{eq:h:omega:in:assume}
\end{align}
and define
\begin{align*}
 h(x,t) =  -  \int_0^1 (u(x,y,t))^2 \, dy + \int_\T \int_0^1 (u(x,y,t))^2 \, dy dx.
\end{align*}
With $h$ as above, let $\omega^\flat$   be defined via \eqref{eq:flat}, and let $\omi$ be as defined in \eqref{eq:in:def}.
Then there exists  $\beta_* = \beta_*(\tau_0,\tau_1,\gamma,r,M)$  such that: if $\beta \ge \beta_*$, if $T$ is such that $\tau(t) \ge \tau_1$ for $t \in [0,T]$, then 
\begin{align*}
 \int_0^t \abs{h(s)}_{\gamma,r,\tau(s)}^2 ds  &\les M^2 \int_0^t \norm{\omi(s)}_{\gamma,r,\tau(s)}^2 ds  
\end{align*}
for any $t \in [0,T]$.
\end{lemma}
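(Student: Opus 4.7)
The plan is to expand the nonlinearity $h = -\int_0^1 u^2\, dy + \langle \cdot \rangle_x$ using the decomposition $u = \uin + \ubl$ of \eqref{eq:in:def}, control the $(\uin)^2$ contribution by a Gevrey product estimate driven by the hypothesis $\norm{\omega}_{\gamma,r/4,\tau}\leq M$, and control the $\ubl$-dependent contributions by Lemma~\ref{lem:BL:1}, which converts $\ubl$-Gevrey norms back into $h$-type norms at the cost of a $\beta^{-5/2}$ gain. The conversion $\uin \to \omi$ is elementary: since $\pa_y \ubl = \ombl$, we have $\pa_y \uin = \omi$, while $u\vert_{y=0}=0$ gives the boundary identity $\uin\vert_{y=0}=-\ubl\vert_{y=0}$, so
\[
\uin(x,y,t) = -\ubl(x,0,t) + \int_0^y \omi(x,z,t)\, dz.
\]
A final absorption for $\beta$ large enough yields the claim.

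For the product estimate, since $h$ has zero $x$-mean, Poincaré in $x$ reduces the $j=0$ mode to $j=1$, and Jensen in $y$ produces $\abs{h}_{\gamma,r,\tau}^2 \les \norm{u^2}_{\gamma,r,\tau}^2$. A standard dyadic Gevrey Leibniz bound using the multiplier ratio
\[
\frac{M_j \binom{j}{k}}{M_k M_{j-k}} \les \frac{1}{\tau \min(k+1,j-k+1)^r},
\]
valid since $\gamma\geq 1$, together with the summability of $(k+1)^{-r}$ in the low-frequency half provided by $r > 2\gamma+2$, then gives $\norm{u^2}_{\gamma,r,\tau} \les \norm{u}_{L^\infty_{x,y}} \norm{u}_{\gamma,r,\tau}$. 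The $L^\infty_{x,y}$ factor is controlled by $M$ via $u(x,y) = \int_0^y \omega(x,z)\, dz$ combined with a Sobolev embedding in $x$: $\norm{u}_{L^\infty_{x,y}}\les \norm{\omega}_{L^2_y H^1_x} \les \norm{\omega}_{\gamma,r/4,\tau} \leq M$. Splitting $u=\uin+\ubl$, substituting the $\uin$-identity, and invoking Lemma~\ref{lem:BL:1}---specifically \eqref{eq:u:flat:Gevrey} for the bulk term $\norm{\ubl}_{\gamma,r,\tau}$, and \eqref{eq:u:flat:y=12} together with a formula-based trace estimate at $y=0$ obtained exactly as in Lemma~\ref{lem:BL:2} for $\abs{\ubl\vert_{y=0}}_{\gamma,r,\tau}$---one arrives, after squaring and integrating in time, at
\[
\int_0^t \abs{h(s)}_{\gamma,r,\tau(s)}^2\, ds \les M^2 \int_0^t \norm{\omi(s)}_{\gamma,r,\tau(s)}^2\, ds + \frac{C M^2}{\beta^{5/2}} \int_0^t \abs{h(s)}_{\gamma,r+\gamma-\frac{5}{4},\tau(s)}^2\, ds.
\]

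The crucial observation is that the hypothesis $\gamma\leq 5/4$ ensures $r+\gamma-\frac{5}{4}\leq r$, so $\abs{h}_{\gamma,r+\gamma-5/4,\tau}\leq \abs{h}_{\gamma,r,\tau}$; choosing $\beta\geq \beta_*(M)$ so that $CM^2\beta_*^{-5/2}\leq 1/2$ absorbs the last term into the left-hand side and gives the desired bound. The expected main obstacle is the careful Gevrey bookkeeping in the product step: one must ensure that the Sobolev extraction of the $L^\infty$ factor demands only weights at level $r/4$ (not $r$), while the ``high-derivative'' factor in the Leibniz split retains the full $M_j$ weight at level $r$. This is precisely the role of the hypothesis $r > 2\gamma+2$. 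Once this balance is achieved, the $\beta^{-5/2}$ gain from Lemma~\ref{lem:BL:1} matches the $\gamma-\frac{5}{4}$ weight loss exactly when $\gamma\leq 5/4$, which is the same Gevrey threshold as in Theorem~\ref{thm:main}.
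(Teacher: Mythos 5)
The genuine gap is in your conversion of $\uin$ into $\omi$ through the identity $\uin = -\ubl\vert_{y=0} + \int_0^y \omi\,dz$, i.e.\ in the trace term $\ubl\vert_{y=0}$. This trace is dominated by $u^\flat\vert_{y=0}$, which is evaluated exactly where the layer has full amplitude; from the explicit formula \eqref{Fourier_omega} one has $\hat{u}^\flat_j\vert_{y=0} = -\hat f_j\,\bigl(2-\sqrt{\beta(j+1)+i\zeta}\bigr)^{-1}\bigl(\beta(j+1)+i\zeta\bigr)^{-1/2}$, so the method only yields $\int_0^t \abs{u^\flat\vert_{y=0}}_{\gamma,r,\tau}^2\,ds \les \beta^{-2}\int_0^t \abs{h}_{\gamma,r+\gamma-1,\tau}^2\,ds$. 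The Gevrey-index loss is $\gamma-1$, not $\gamma-\tfrac54$: for $\gamma>1$ the norm on the right is strictly stronger than $\abs{h}_{\gamma,r,\tau}$ (the weight ratio $(j+1)^{\gamma-1}$ is unbounded in $j$), so no choice of $\beta_*$ allows absorption, and your displayed intermediate inequality, which records only an $r+\gamma-\tfrac54$ loss, does not follow from your decomposition. You have implicitly treated $y=0$ like the evaluations at $y=\tfrac12$ or $y=1$ in Lemma~\ref{lem:BL:1}, but those gain arbitrary powers of $\beta$ precisely because they sit outside the layer of width $(\beta(j+1))^{-1/2}$; at $y=0$ no such gain exists. (Your route would close at $\gamma=1$, but the whole point of the lemma is $\gamma>1$.)

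The paper avoids this by never estimating an unweighted $\norm{\ubl}_{\gamma,r,\tau}$ nor any trace at $y=0$. In the Leibniz sum for $h_j$ it splits only the high-derivative factor $u_{j-\ell}=\uin_{j-\ell}+\ubl_{j-\ell}$ and then: (i) for the interior piece it uses the compatibility condition \eqref{eq:u:compatibility}, $\int_0^1 u_\ell\,dy=0$ for $\ell\geq1$, to subtract the $y$-mean of $\uin_{j-\ell}$ and apply Poincar\'e in $y$, producing $\omi_{j-\ell}$ with no boundary term; (ii) for the boundary-layer piece it pairs against $u_\ell/\bigl(y(1-y)\bigr)$ via Hardy (legitimate since $u_\ell$ vanishes at $y=0,1$), so that only the weighted quantities $y(1-y)u^{bl}_{j-\ell}$, $y\,\omega^\flat_j$, $y\,u^\flat_j$ and the harmless $u^\flat_j\vert_{y=1/2}$ appear, all with index at most $r+\gamma-\tfrac54$ and enough $\beta$-gain to absorb when $\gamma\leq\tfrac54$; the $\ell=0$ term is handled separately using $\int_0^1 \uin_j\,dy = -\int_0^1 \ubl_j\,dy$ for $j\geq1$, again a consequence of \eqref{eq:u:compatibility}, which your proposal never invokes. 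A secondary point: the product estimate $\norm{u^2}_{\gamma,r,\tau}\les\norm{u}_{L^\infty_{x,y}}\norm{u}_{\gamma,r,\tau}$ is not correct as stated, since the low-derivative Leibniz factors require weighted $L^\infty_x L^2_y$ control of $u_\ell$ (hence of $\omega_\ell,\omega_{\ell+1}$); this is where the hypothesis at level $\tfrac r4$ and the largeness of $r$ are actually consumed, and it is repairable along the lines you sketch — but the $y=0$ trace is not.
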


\noindent
Note that with $h$ defined as above we have $\pa_x h = - \pa_x \int_0^1 u^2 \, dy$, so that the additional kinetic energy term in $h$ is not seen by $\omega^{bl}$. 
Combining Lemmas~\ref{lem:BL:1} and~\ref{lem:BL:2} and \ref{lem:h:omega:in}, we see that condition \eqref{eq:h:omega:in:assume} implies a sharp control of the Gevrey norm of the boundary layer profiles $\omega^{bl}$, $u^{bl}$, and $v^{bl}$, solely in terms of the Gevrey norm of the interior vorticity $\omega^{in}$ and of the constants $M$ and $\beta$. 
\begin{proof}[Proof of Lemma~\ref{lem:h:omega:in}]
For $j=0$ we have $h_0 = M_0 h = \tau h$, and since $\int_{\T} h(x,t) \, dx = 0$, we may apply the Poincar\'e inequality in the $x$ variable:
\begin{equation} \label{eq:h0:estimate} 
\norm{h_0}_{L^2_x} \les  \norm{\pa_x h_0}_{L^2_x} \les  \| h_1 \|_{L^2_x}. 
\end{equation}
Hence, it is enough to estimate $h_j$ for $j \ge 1$. By the Leibniz rule we have
\begin{align}
-  h_j(x,t) = \sum_{\ell=0}^j {j\choose \ell}  \frac{M_j}{M_{j-\ell} M_{\ell}} \int_0^1 u_\ell(x,y,t) u_{j-\ell}(x,y,t) dy.
\label{eq:hj:Leibniz}
\end{align}
We can without loss of generality estimate only the half-sum  $\sum_{0 \leq \ell \leq j/2}$, as the other half-sum can be put in the same form through the change of index $\ell' = j - \ell$. 

\mspace
First let us treat the case $\ell \geq 1$.
The compatibility condition \eqref{eq:u:compatibility} yields $\int_0^1 u_\ell(x,y) dy = 0$, 
which directly implies that 
\begin{align*}
\int_0^1 u_\ell(x,y) \uin_{j-\ell}(x,y) dy = \int_0^1 u_\ell(x,y) \left( \uin_{j-\ell}(x,y) - \int_0^1 \uin_{j-\ell}(x,z) dz \right) dy.
\end{align*}
Using the $1D$ Gagliardo-Nirenberg inequality, the $1D$ Hardy inequality, the $1D$ Poincar\'e inequality, and the fact that $u_\ell\vert_{y=0} = u_\ell\vert_{y=1} = 0$, we have that for $\ell\geq 1$:
\begin{align*}
&\norm{\int_0^1 u_\ell(x,y)  u_{j-\ell} (x,y) dy}_{L^2_x} \notag\\
&\quad \leq \norm{\int_0^1 u_\ell(x,y)  u_{j-\ell}^{in} (x,y) dy}_{L^2_x} + \norm{\int_0^1 u_\ell(x,y)  u_{j-\ell}^{bl} (x,y) dy}_{L^2_x} \notag\\
&\quad\leq \norm{u_\ell}_{L^\infty_x L^2_y} \norm{u_{j-\ell}^{in} - \int_0^1 \uin_{j-\ell} dz}_{L^2_{x,y}} + \norm{\frac{u_\ell}{y (1-y)}}_{L^\infty_x L^2_y} \norm{y(1-y) u_{j-\ell}^{bl}}_{L^2_{x,y}} \notag\\
&\quad\les \norm{u_\ell}_{L^2_{x,y}}^{1/2}  \norm{\pa_x u_\ell}_{L^2_{x,y}}^{1/2}   \norm{\omi_{j-\ell}}_{L^2_{x,y}}  + \norm{\omega_\ell}_{L^2_{x,y}}^{1/2}   \norm{\pa_x \omega_\ell}_{L^2_{x,y}}^{1/2}  \norm{y(1-y) u_{j-\ell}^{bl}}_{L^2_{x,y}}\notag\\
&\quad\les \frac{M_{\ell}^{1/2}}{M_{\ell+1}^{1/2}} \norm{\omega_\ell}_{L^2_{x,y}}^{1/2}   \norm{\omega_{\ell+1}}_{L^2_{x,y}}^{1/2} \left( \norm{\omega_{j-\ell}^{in}}_{L^2_{x,y}}  + \norm{y(1-y) u_{j-\ell}^{bl}}_{L^2_{x,y}} \right).
\end{align*}
For $\ell=0$, we estimate the $L^2_x$ norm of $\int_0^1 u_0 \, u_j^{bl} dy$ precisely as in the case $\ell\geq 1$. For the interior piece, since $j\geq 1$ we may use \eqref{eq:u:compatibility} and the Poincar\'e inequality in $y$ to estimate  
\begin{align*}
\norm{\int_0^1 u_0(x,y) u_j^{in}(x,y) dy }_{L^2_x} 
&\les \norm{u_0}_{L^\infty_x L^2_y} \left( \norm{u_j^{in}(x,y) - \int_0^1 u_j^{in} (x,z) dz}_{L^2_{x,y}} + \norm{\int_0^1 u_j^{bl}(x,z) dz}_{L^2_{x,y}} \right)
\\
&\les M \left( \norm{\omega_j^{in}}_{L^2_{x,y}} +\norm{\int_0^1 u_j^{bl}(x,z) dz}_{L^2_{x}}  \right)
\end{align*}
since $\norm{u_0}_{L^\infty_x L^2_y} \les \norm{\omega_0}_{L^\infty_x L^2_y} \les \norm{\omega_0}_{L^2_{x,y}} + \norm{\omega_1}_{L^2_{x,y}} \les M$. At this point we note that 
\begin{align*}
\int_0^1 u_j^{bl}(x,y) dy = - \int_0^{1/2} y \omega_j^{bl} (x,y) dy + u_j^{bl}(x,1/2) + \int_{1/2}^1 (1-y) \omega_j^{bl}(x,y) dz
\end{align*}
so that 
\begin{align*}
\norm{\int_0^1 u_j^{bl}(x,y) dy}_{L^2_{x}} \les \norm{y \omega_j^{\flat}}_{L^2_{x,y}} + \norm{u_j^{\flat}(x,1/2)}_{L^2_x}.
\end{align*}

\mspace
Returning to \eqref{eq:hj:Leibniz}, 
and using that in this range of $\ell$, namely less than $j/2$, we have
\begin{align*}
{j\choose \ell}  \frac{M_j}{M_{j-\ell} M_{\ell}^{1/2} M_{\ell+1}^{1/2}} \les \frac{1}{\tau^{1/2}} {j\choose \ell}^{1-\gamma} \frac{1}{(\ell+1)^{r-\gamma/2}} \les \frac{1}{(\ell+1)^{r-\gamma/2}},
\end{align*}
for $j\geq 1$ we obtain
\begin{align} \label{estim_hj}
\norm{h_j}_{L^2_x}  
&\les \sum_{\ell=1}^{\lceil j/2 \rceil} {j\choose \ell}  \frac{M_j}{M_{j-\ell} M_{\ell}^{1/2} M_{\ell+1}^{1/2}}  \norm{\omega_\ell}_{L^2_{x,y}}^{1/2}   \norm{\omega_{\ell+1}}_{L^2_{x,y}}^{1/2}  \left( \norm{\omega_{j-\ell}^{in}}_{L^2_{x,y}}  + \norm{y(1-y) u_{j-\ell}^{bl}}_{L^2_{x,y}} \right)  \notag\\
&\qquad + M \left(\norm{\omega_j^{in}}_{L^{2}_{x,y}} + \norm{y u_j^\flat}_{L^2_{x,y}} + \norm{y \omega_j^\flat}_{L^2_{x,y}}  +\norm{u_j^{\flat}(x,1/2)}_{L^2_x} \right) 
\notag\\
&\les  \sum_{\ell=1}^{\lceil j/2 \rceil} \frac{{(l+1)^{-\frac{3r}{4}}}\norm{\omega_\ell}_{L^2_{x,y}}^{1/2}  \norm{\omega_{\ell+1}}_{L^2_{x,y}}^{1/2}  }{(\ell+1)^{{\frac{r}{4}-\frac{\gamma}{2}}}} \left( \norm{\omega_{j-\ell}^{in}}_{L^2_{x,y}}  + \norm{y u_{j-\ell}^{\flat}}_{L^2_{x,y}} \right)  \notag\\
&\qquad + M \left(\norm{\omega_j^{in}}_{L^{2}_{x,y}} + \norm{y u_j^\flat}_{L^2_{x,y}} + \norm{y \omega_j^\flat}_{L^2_{x,y}}  +\norm{u_j^{\flat}(x,1/2)}_{L^2_x} \right).
\end{align}
From \eqref{eq:h0:estimate} and \eqref{estim_hj}, using the discrete H\"older and Young inequalities, inequalities \eqref{eq:y:u:flat:Gevrey}, \eqref{eq:u:flat:y=12}, \eqref{eq:y:omega:flat:Gevrey} and assumption \eqref{eq:h:omega:in:assume} we obtain from the above that
\begin{align*}
& \int_0^t \abs{h(s)}_{\gamma,r,\tau(s)}^2 ds
= \int_0^t  \sum_{j\geq 0} \norm{h_j(s)}_{L^2_{x}}^2 ds 
\notag\\
&\les 
\sup_{[0,t]} \left(\sum_{j\geq 0} \frac{{(j+1)^{-\frac{3r}{4}}} (\norm{\omega_{j}}_{L^2_{x,y}} +  \norm{\omega_{j+1}}_{L^2_{x,y}})}{ (j+1)^{ {\frac r4-\frac{\gamma}{2}}}}  \right)^2
 \int_0^t \Bigl(\sum_{j\geq 0}  \norm{\omega_{j}^{in}}_{L^2_{x,y}}^2  + \sum_{j\geq 0}   
 \norm{y u_{j}^{\flat}}_{L^2_{x,y}}^2 \Bigr) ds \notag\\
  &\qquad + M^2 \int_0^t \left( \norm{\omega^{in}(s)}_{\gamma,r,\tau(s)}^2 + \norm{y u^\flat(s)}_{\gamma,r,\tau(s)}^2 +   \norm{y \omega^\flat(s)}_{\gamma,r,\tau(s)}^2  +\abs{u^{\flat}(s)\vert_{y=1/2}}_{\gamma,r,\tau(s)}^2 \right) ds
 \notag\\
 &\les M^2 \left(\int_0^t \norm{\omi(s)}_{\gamma,r,\tau(s)}^2 ds   + \int_0^t \norm{y u^{\flat}(s)}_{\gamma,r,\tau(s)}^2 +   \norm{y \omega^\flat}_{\gamma,r,\tau(s)}^2  +\abs{u^{\flat}\vert_{y=1/2}}_{\gamma,r,\tau(s)}^2 ds \right) \notag\\
  &\les M^2 \left(\int_0^t \norm{\omi(s)}_{\gamma,r,\tau(s)}^2 ds   + \frac{1}{\beta^{5/2}} \int_0^t \abs{h(s)}_{\gamma,r+\gamma-\frac{5}{4},\tau(s)}^2 ds \right).
\end{align*}
Here we have used that $ {r/4-\gamma/2} > 1/2$. The proof is completed  using that $M^2 \beta^{-5/2} \ll 1$, which follows once $\beta_*$ is taken sufficiently large, and the fact that $\gamma \leq 5/4$, which allows us to absorb the second term in the right side of the above into the left side.
\end{proof}

\section{Estimates involving $\omi$}
From the vorticity evolution \eqref{eq:HNS:vort}, and the definition of $\ombl$ \eqref{eq:bl:def} (which in particular obeys $\int_\T \ombl (x,y,t) dx = 0$ for any $y\geq 0$), we obtain that the equation obeyed by the interior vorticity is
\begin{subequations}
\label{eq:omega:in}
\begin{align}
 \pa_t \omi - \pa_y^2 \omi + u \pa_x \omi + v \pa_y \omi
 &= - u\pa_x \ombl - v \pa_y \ombl
 \\
 \pa_y \omi\vert_{y=0,1} &=  \tilde \omega^{in}\vert_{y=1} - \tilde \omega^{in}\vert_{y=0}  + 2 \omega^\flat\vert_{y=1} - \pa_y \omega^\flat \vert_{y=1}.
 \\
 \omega^{in}(0) &= \omega_0
  \end{align}
\end{subequations}
The initial condition for $\omi$ is obtained from the fact that $\omega^{bl}(0) = 0$, which holds in view of \eqref{eq:flat:c}.
The main a priori estimate for $\omi$ is provided by the following Proposition. 

\begin{proposition}
\label{prop:omega:in}
Let $M, \delta_0$, $\gamma \in [1,9/8]$ be given, and let $\beta_*$ be as in Lemma~\ref{lem:h:omega:in}. There exists $r_0 = r_0(\gamma)$ such that for all $r \ge r_0$,  one can find   $\beta_0 = \beta_0(M,\delta_0,\tau_0,\tau_1,r,\gamma) > \max(\beta_*,4)$ satisfying: if $\beta \geq \beta_0$ and $T \leq 1$ is small enough so that $\tau(t) \ge \tau_1$ for all $t\in [0,T]$, under the assumptions
\begin{align}
 \sup_{t\in[0,T]} \norm{\omega(t)}_{\gamma, {\frac{3r}{4}},\tau(t)} +  \sup_{t\in[0,T]} \norm{\pa_y \omega(t)}_{\gamma, {\frac{r}{2}},\tau(t)} \leq M \label{eq:omi:ass:1}
\end{align}
and 
\begin{align}
\delta_0 \leq \pa_y  \omega &\leq \frac{1}{\delta_0}, \label{eq:omi:ass:2}\\
 \sup_{t\in [0,T]} \norm{\pa_y^2  \omega(t)}_{L^\infty_x L^2_y} &\leq M, \label{eq:omi:ass:3}
\end{align} 
we have that
\begin{equation} \label{eq:prop:omega:in:1}
 \sup_{s\in[0,t]} \norm{\omi(s)}_{\gamma,r,\tau(s)}^2 +   \int_0^t \norm{\pa_y \omi(s)}_{\gamma,r,\tau(s)}^2 ds  +  \beta \int_0^t  \norm{\omi(s)}_{\gamma,r+\frac 12,\tau(s)}^2   ds   \leq  {\frac{1}{\delta_0^2}}  \norm{\omega(0)}_{\gamma,r,\tau_0}^2 
\end{equation}
holds for all $t\in[0,T]$. Moreover, as a consequence we obtain
\begin{equation}
 \sup_{s\in[0,t]} \norm{\omega(s)}_{\gamma,r-\gamma + \frac 34,\tau(s)}^2 +  \int_0^t \norm{\pa_y \omega(s)}_{\gamma,r-\gamma + \frac 34,\tau(s)}^2 ds  +  \beta \int_0^t \norm{\omega(s)}_{\gamma,r -\gamma +\frac 54 ,\tau(s)}^2 ds  \leq   {\frac{4}{\delta_0^2}}  \norm{\omega(0)}_{\gamma,r,\tau_0}^2
\label{eq:omega:Gevrey:0}
\end{equation}
for all $t \in [0,T]$.
\end{proposition}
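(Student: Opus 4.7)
The plan is to run a weighted $L^2$ energy estimate on the family $\omi_j = M_j\pa_x^j \omi$, with multiplier $\omi_j/\rho$ where $\rho := \pa_y \omega$, then sum over $j \ge 0$. Since $\dot M_j / M_j = -\beta(j+1)$, applying $M_j \pa_x^j$ to \eqref{eq:omega:in} gives
\begin{equation*}
\pa_t \omi_j + \beta(j+1)\omi_j - \pa_y^2 \omi_j + u\pa_x \omi_j + v\pa_y \omi_j = -[M_j\pa_x^j,\, u\pa_x + v\pa_y]\omi - M_j\pa_x^j(u\pa_x\ombl + v\pa_y\ombl),
\end{equation*}
together with a boundary condition for $\pa_y\omi_j\vert_{y=0,1}$ obtained by applying $M_j \pa_x^j$ to the boundary condition in \eqref{eq:omega:in}. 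The convexity bound \eqref{eq:omi:ass:2} makes the multiplier admissible, and the energy $E(t) := \frac12 \sum_j \int_\Omega \omi_j^2/\rho\, dxdy$ is comparable to $\norm{\omi}_{\gamma,r,\tau}^2$.

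\textbf{Linear terms.} Time differentiation yields $\frac{d}{dt}E$ modulo an error from $\pa_t(1/\rho)$, controlled by \eqref{eq:omi:ass:3} and the evolution equation for $\pa_t\rho$. The $\beta$-damping delivers the coercive quantity $\beta \sum (j+1)\int \omi_j^2/\rho \simeq \beta \norm{\omi}_{\gamma,r+\frac12,\tau}^2$, which accounts for the third term of \eqref{eq:prop:omega:in:1}. Integrating the viscous term by parts in $y$ produces the dissipation $\sum \int(\pa_y\omi_j)^2/\rho$, a harmless $\pa_y(1/\rho)$ commutator, and boundary contributions $[\omi_j\pa_y\omi_j/\rho]_{y=0}^{y=1}$. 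Substituting the boundary condition splits these into a trace piece involving $\omi_j\vert_{y=0,1}$, absorbed via the trace inequality $\abs{f\vert_{y=0,1}}_{L^2_x}^2 \les \varepsilon\norm{\pa_y f}_{L^2}^2 + C_\varepsilon \norm{f}_{L^2}^2$ into the dissipation, and an exponentially small piece from $\omega^\flat\vert_{y=1}, \pa_y \omega^\flat\vert_{y=1}$, handled by \eqref{eq:omega:flat:y=1}--\eqref{eq:dy:omega:flat:y=1} combined with Lemma~\ref{lem:h:omega:in}.

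\textbf{Convection and the hydrostatic trick.} The leading transport $\int (u\pa_x + v\pa_y)\omi_j \cdot \omi_j/\rho$ is integrated by parts using $\pa_x u + \pa_y v = 0$ and $v\vert_{y=0,1}=0$, leaving only derivatives on $1/\rho$, which are controlled by \eqref{eq:omi:ass:1} and \eqref{eq:omi:ass:3}. Intermediate Leibniz commutators ($1 \le k \le j-1$) are estimated by a standard low-high / high-low splitting; the loss of $(j+1)^\gamma$ is absorbed by the coercive $\beta(j+1)$. The truly critical piece is the top Leibniz term $M_j(\pa_x^j v)\pa_y\omi$, which loses one full $x$-derivative since $\pa_x v = -\int_0^y \pa_x^2 u$. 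This is exactly where the hydrostatic trick enters: substituting $\pa_y\omi = \rho - \pa_y\ombl$ and $\omi = \omega - \ombl$, the leading contribution reduces to $\int (M_j \pa_x^j v)(M_j\pa_x^j\omega)$, which vanishes by successive integrations by parts in $y$ (using $\omega = \pa_y u$ and $v\vert_{y=0,1}=0$, so that it becomes $\int M_j\pa_x^{j+1}u\cdot M_j\pa_x^j u$) and in $x$ by periodicity. The residual involves only $\ombl$ and $\pa_y\ombl$, controlled by Lemma~\ref{lem:BL:1} with $\beta^{-k/2}$ gains, and reduced to $\norm{\omi}$ via Lemma~\ref{lem:h:omega:in}.

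\textbf{Forcing, closure, and main obstacle.} The inhomogeneity $-M_j\pa_x^j(u\pa_x\ombl + v\pa_y\ombl)$ is bounded by Cauchy--Schwarz and the product rule; each contribution carries at least one boundary-layer factor, so Lemma~\ref{lem:BL:1} (in particular the $\beta^{-5/2}$ gain on $\norm{y\ombl}$) combined with Lemma~\ref{lem:h:omega:in} reduces it to a Gevrey norm of $\omi$ absorbable into the dissipation and $\beta\norm{\omi}_{\gamma,r+\frac12,\tau}^2$, provided $\beta_0$ is large. A Gronwall argument on $E(t)$ then yields \eqref{eq:prop:omega:in:1}. The second bound \eqref{eq:omega:Gevrey:0} follows from the splitting $\omega = \omi + \ombl$, \eqref{eq:prop:omega:in:1} (noting $r-\gamma+3/4 \le r$), and Lemmas~\ref{lem:BL:1}--\ref{lem:h:omega:in}, where the index shifts $r-\gamma+3/4$ and $r-\gamma+5/4$ match precisely the scalings in the BL lemmas. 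The main obstacle is the simultaneous bookkeeping of derivative losses from (a) the top Leibniz commutator with $v$, (b) the trace piece in the boundary condition for $\pa_y\omi$, and (c) the boundary-layer couplings: reconciling the powers of $(j+1)$ and $\beta$ against the index shifts of the BL lemmas is what forces the technical restriction $\gamma \le 9/8$.
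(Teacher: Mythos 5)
Your outline follows essentially the same route as the paper: the weighted energy on $\omi_j$ with multiplier $\omi_j/\pa_y\omega$, the hydrostatic trick killing the top-order $v_j$ term (your version cancels $\int_\Omega v_j\,\omega_j=0$ exactly and pushes the cross terms into boundary-layer residuals, whereas the paper keeps $\int_\Omega \vin_j\,\omi_j$ and reduces it to boundary-layer traces at $y=1$ --- an equivalent bookkeeping), the absorption of the $\omi_j\vert_{y=0,1}$ traces into the dissipation with the $y=1$ boundary-layer traces exponentially small, closure of every boundary-layer contribution through Lemmas~\ref{lem:BL:1} and~\ref{lem:h:omega:in}, and the deduction of \eqref{eq:omega:Gevrey:0} from \eqref{eq:prop:omega:in:1} via $\omega=\omi+\ombl$ and Lemma~\ref{lem:BL:2}. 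The only real imprecision is attributing the absorption of the commutator losses to the coercive $\beta(j+1)$ alone --- in fact it is the interplay between the half-derivative gain $\beta\norm{\omi}_{\gamma,r+\frac12,\tau}^2$, the weight gap between $r$ and $\frac{3r}{4}$ in \eqref{eq:omi:ass:1}, and the index shifts $r+2\gamma-\frac74$, $r+2\gamma-\frac94$ of the boundary-layer lemmas that produces the constraint $\gamma\le 9/8$ (sharply, from the term $\int_\Omega u\,\pa_x\ombl_j\,\omi_j/\pa_y\omega$) --- but this does not change the architecture of the argument.
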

\begin{proof}[Proof of Proposition~\ref{prop:omega:in}]
Using the convention \eqref{eq:subindex:j}, from \eqref{eq:omega:in} we obtain
\begin{subequations}
\label{eq:omega:in:j}
\begin{align}
(\pa_t + \beta (j+1) -\pa_y^2 ) \omi_j  &+ (u \pa_x + v \pa_y) \omi_j + \vin_j \pa_y \omega \notag\\
&= -  (u \pa_x +v \pa_y )\ombl_j  - \vbl_j \pa_y \omega - M_j [\pa_x^j, u\pa_x + v\pa_y] \omega + v_j \pa_y \omega \label{eq:omega:in:j:a}\\
 \pa_y \omi_j\vert_{y=0,1} 
 &= \tilde \omega^{in}_j\vert_{y=1} - \tilde \omega^{in}_j\vert_{y=0}  +  2 \omega^\flat_j\vert_{y=1} - \pa_y \omega^\flat_j\vert_{y=1}.
 \label{eq:omega:in:j:b}
\end{align}
\end{subequations}
Note that as soon as $j\geq 1$, we may replace $\tilde \omega^{in}_j \vert_{y=0,1} = \omega^{in}_j \vert_{y=0,1}$ in \eqref{eq:omega:in:j:b}.
We perform a ``hydrostatic energy estimate'' on \eqref{eq:omega:in:j}, which is permissible in view of  \eqref{eq:omi:ass:2}. That is, we multiply \eqref{eq:omega:in:j:a} with $\omi_j/\pa_y \omega$ and integrate over $\Omega = \T \times [0,1]$.  We  notably  use  the ``hydrostatic trick'', which in this case gives
\begin{align*}
\int_\Omega \vin_j \omi_j dx dy &= - \int_\Omega \left( \int_0^y \pa_x \uin_j\right) \pa_y \uin_j dx dy 
\notag\\
&= \int_\Omega \pa_x \uin_j \uin_j dx dy - \int_\T  \left( \int_0^1 \pa_x \uin_j\right) \uin_j\vert_{y=1} dx 
\notag\\
&=   {-}\int_\T  \left( \int_0^1 \pa_x \ubl_j(x,y) dy \right) \ubl_j(x,1) dx.
\end{align*}
taking into account that $\int_0^1 \pa_x u_j(x,y) dy =0$ and that   $u_j\vert_{y=1} =0$.
Thus, we obtain
\begin{align}
&\frac{1}{2}\frac{d}{dt} \norm{\frac{\omi_j}{\sqrt{\pa_y \omega}}}_{L^2}^2 + \beta (j+1) \norm{\frac{\omi_j}{\sqrt{\pa_y \omega}}}_{L^2}^2 + \norm{\frac{\pa_y \omi_j}{\sqrt{\pa_y \omega}}}_{L^2}^2 
\notag\\
& = \int_\T \left( \frac{\pa_y \omi_j \omi_j}{\pa_y \omega}\Big\vert_{y={1}} -  \frac{\pa_y \omi_j \omi_j}{\pa_y \omega}\Big\vert_{y={0}} \right) dx + \int_\T  \left( \int_0^1 \pa_x \ubl_j(x,y) dy \right) \ubl_j(x,1) dx 
\notag\\
&\quad + \int_\Omega \frac{\pa_y \omi_j \omi_j}{\pa_y \omega} \frac{\pa_y^2 \omega}{\pa_y \omega} dxdy
- \frac 12 \int_\Omega \frac{(\omi_j)^2}{\pa_y \omega} \frac{(u \pa_x + v \pa_y) \pa_y \omega}{\pa_y \omega} dxdy
\notag\\
& \quad - \int_\Omega  u \pa_x  \ombl_j \frac{\omi_j}{\pa_y \omega} dx dy  - \int_\Omega v  \pa_y \ombl_j \frac{\omi_j}{\pa_y \omega} dx dy - \int_\Omega \vbl_j \omi_j dx dy \notag\\
&\quad - \sum_{k=1}^{j} \frac{M_j}{M_k M_{j-k+1}} {j\choose k} \int_\Omega u_k  \omega_{j-k+1} \frac{\omi_j}{\pa_y \omega} dx dy   - \sum_{k=1}^{j-1} \frac{M_j}{M_k M_{j-k}} {j\choose k} \int_\Omega v_k  \pa_y \omega_{j-k} \frac{\omi_j}{\pa_y \omega} dx dy \notag\\
&=: T_{1j} + T_{2j} + T_{3j} - T_{4j} - T_{5j} - T_{6j} - T_{7j} - T_{8j} - T_{9j}.
\label{eq:omega:in:j:1}
\end{align}

\mspace
Summing over $j$, and integrating on $[0,t)$, with $t\leq T$, we obtain that 
\begin{align}
&\norm{\omi(t)}_{\gamma,r,\tau(t)}^2 + 2 \beta \int_0^t  \norm{\omi}_{\gamma,r+1/2,\tau}^2   +  \int_0^t \norm{\pa_y \omi}_{\gamma,r,\tau}^2 \notag\\
&\le   {\frac{1}{\delta_0^2}} \norm{\omi_0}_{\gamma,r,\tau_0}^2 + \frac{1}{\delta_0} \int_0^t \sum_{j\geq 0} \left( |T_{1j}| - \frac{1}{2} \norm{\frac{\pa_y \omi_j}{\sqrt{\pa_y \omega}}}_{L^2}^2 \right) + |T_{2j}| + \left( |T_{3j}| + |T_{4j}| -   \frac{1}{2} \norm{\frac{\pa_y \omi_j}{\sqrt{\pa_y \omega}}}_{L^2}^2 \right) ds
\notag\\
&\qquad + \frac{1}{\delta_0} \int_0^t  \sum_{j\geq 0}   |T_{5j}| + |T_{6j}| + |T_{7j}| +  |T_{8j}| +  |T_{9j}| ds.
\label{eq:omega:in:j:2}
\end{align}
The rest of the proof is dedicated to estimating the nine terms on the right side of \eqref{eq:omega:in:j:2}.

\mspace
{\bf The $T_{1j}$ bound.} From \eqref{eq:HNS:vort:BC} and \eqref{eq:omega:in:j:b} we obtain that 
\begin{align*}
T_{1j} 
&=  \int_\T   \frac{\pa_y \omi_j\vert_{y=0,1} (\omi_j\vert_{y= {1}} - \omi_j\vert_{y= {0}})}{\pa_y \omega\vert_{y=0,1}}   dx 
\notag\\
&=  \int_\T   \frac{(\tilde\omega^{in}_j\vert_{y= {1}} - \tilde\omega^{in}_j\vert_{y={0}}) (\omi_j\vert_{y= {1}} - \omi_j\vert_{y=0})}{\pa_y \omega\vert_{y=0,1}}   dx 
+ \int_\T   \frac{ (2\omega^\flat_j\vert_{y=1} - \pa_y \omega_j^\flat\vert_{y=1})(\omi_j\vert_{y= {1}} - \omi_j\vert_{y= {0}})}{\pa_y \omega\vert_{y=0,1}}   dx
\notag\\
&=T_{11j} + T_{12j}.
\end{align*}
From the Gagliardo-Nirenberg inequality $\norm{f}_{L^\infty(0,1)} \leq \norm{f}_{L^2(0,1)} + 2 \norm{f}_{L^2(0,1)}^{1/2}   \norm{\pa_y f}_{L^2(0,1)}^{1/2}$, we have
\begin{align*}
\left| T_{11j} \right|
&\les \frac{1}{\delta_0} \left( \norm{\omi_j}_{L^2_{x,y}}^2 + \norm{\omi_j}_{L^2_{x,y}} \norm{\pa_y \omi_j}_{L^2_{x,y}} \right).
\end{align*}
Using Cauchy-Schwartz, we similarly obtain
\begin{align*}
|T_{12j}| \les |T_{11j}| + \frac{1}{\delta_0}  \left( \norm{\omega^\flat_j\vert_{y=1}}_{L^2_x}^2 + \norm{\pa_y \omega_j^\flat\vert_{y=1}}_{L^2_x}^2\right).
\end{align*}
Summing up the above two estimates, and summing over $j\geq 0$ we obtain that 
\begin{align*}
\sum_{j\geq 0} \left(|T_{1j}| - \frac{1}{2} \norm{\frac{\pa_y \omi_j}{\sqrt{\pa_y \omega}}}_{L^2}^2\right)
\les \frac{1}{\delta_0^2}\norm{ \omi }_{\gamma,r,\tau}^2 + \frac{1}{\delta_0}  \left( \abs{\omega^\flat_j\vert_{y=1}}_{\gamma,r,\tau}^2 + \abs{\pa_y \omega_j^\flat\vert_{y=1}}_{\gamma,r,\tau}^2\right).
\end{align*}
Using \eqref{eq:omega:flat:y=1}--\eqref{eq:dy:omega:flat:y=1}, and combining the resulting bound with Lemma~\ref{lem:h:omega:in} (which may be used due to assumption \eqref{eq:omi:ass:1}), we arrive at
\begin{align}
\int_0^t \sum_{j\geq 0} \left(|T_{1j}| - \frac{1}{2} \norm{\frac{\pa_y \omi_j}{\sqrt{\pa_y \omega}}}_{L^2}^2\right) 
&\les \frac{1}{\delta_0^2} \int_0^t \norm{ \omi }_{\gamma,r,\tau}^2 + \frac{1}{\delta_0 \beta^{20}} \int_0^t \abs{h}_{\gamma,r+\gamma-10,\tau}^2
\notag\\
&\les \frac{1}{\delta_0^2} \int_0^t \norm{ \omi }_{\gamma,r,\tau}^2 
\label{eq:T1j}
\end{align}
where we have used that $\delta_0 M^2 \leq \beta^{20}$.

\mspace
{\bf The $T_{2j}$ bound.} From \eqref{eq:bl:def} we obtain that
\begin{align*}
T_{2j} 
&= 2 \int_\T  \left( \int_0^1 \pa_x u^\flat_j(x,y) dy \right) \left(u^\flat_j(x,0) + u^\flat_j(x,1) \right) dx
\notag\\
&=  2 \int_\T  \left( v^\flat_j(x,0) - v^\flat_j(x,1) \right) \left(u^\flat_j(x,0) + u^\flat_j(x,1) \right) dx
\end{align*}
and thus, also appealing to Gagliardo-Nirenberg, we obtain
\begin{align*}
|T_{2j}| 
&\leq 2 \left( \norm{v^\flat_j\vert_{y=0}}_{L^2_x} + \norm{v^\flat_j \vert_{y=1}}_{L^2_x} \right) \left(\norm{u^\flat_j\vert_{y=0}}_{L^2_x} + \norm{u^\flat_j\vert_{y=1}}_{L^2_x} \right)
\notag\\
&\les \frac{\norm{v^\flat_j\vert_{y=0}}_{L^2_x} + \norm{v^\flat_j \vert_{y=1}}_{L^2_x}}{(j+1)^{\frac{3}{2}-\gamma}}
\left((j+1)^{\frac{3}{2}-\gamma} \norm{u_j^\flat}_{L^2_{x,y}} + (j+1)^{\frac{7}{8}-\frac{\gamma}{2}} \norm{u_j^\flat}_{L^2_{x,y}}^{1/2} (j+1)^{\frac{5}{8}-\frac{\gamma}{2}} \norm{\omega_j^\flat}_{L^2_{x,y}}^{1/2} \right),
\end{align*}
and summing over $j$ we arrive at
\begin{align*}
\sum_{j\geq 0} |T_{2j}| \les \left( \abs{v^\flat\vert_{y=0}}_{\gamma,r+\gamma-\frac{3}{2},\tau} + \abs{v^\flat_j \vert_{y=1}}_{\gamma,r+\gamma-\frac{3}{2},\tau} \right) 
\left( \norm{u^\flat}_{\gamma,r+\frac{3}{2}-\gamma,\tau} + \norm{u^\flat}_{\gamma,r+\frac{7}{4}-\gamma,\tau}^{1/2} \norm{\omega^\flat}_{\gamma,r+\frac{5}{4}-\gamma,\tau}^{1/2} \right)
\end{align*}
Upon integrating on $[0,t)$, the above terms are bounded using \eqref{eq:omega:flat:Gevrey}, \eqref{eq:u:flat:Gevrey}, \eqref{eq:v:flat:y=0}, and \eqref{eq:v:flat:y=1}, after which Lemma~\ref{lem:h:omega:in} is used  to yield
\begin{align*}
\int_0^t \sum_{j\geq 0} |T_{2j}| 
&\les \frac{1}{\beta^{5/2}}  \left( \int_0^t \abs{h}^2_{\gamma,r+3\gamma-3,\tau} \right)^{1/2} 
\left(  \left(\int_0^t \abs{h}^2_{\gamma,r+\frac 14,\tau} \right)^{1/2} + \left(\int_0^t \abs{h}^2_{\gamma,r+\frac 12,\tau} \right)^{1/2} \right)
\notag\\
&\les \frac{M^2}{\beta^{5/2}} 
\left( \int_0^t \norm{\omi}^2_{\gamma,r+3\gamma-3,\tau} \right)^{1/2} \left(\int_0^t \norm{\omi}^2_{\gamma,r+\frac 12,\tau} \right)^{1/2} 
\end{align*}
For the last inequality, we have applied  Lemma~\ref{lem:h:omega:in} to both factors at the right-hand side, which is legitimate under the assumptions  
$$ 
r + \min\{3\gamma-3,\frac12\} \ge  2 \gamma + 2, \quad \sup_{[0,T]}\norm{\omega(t)}_{\gamma,\frac{1}{4}\left(r+\max\{3\gamma-3,\frac12\}\right),\tau(t)} \leq M. 
$$
Both 	assumptions are satisfied for $r > r(\gamma)$ large enough, the second one being deduced from \eqref{eq:omi:ass:1}. Thus we have proven
\begin{align}
\int_0^t \sum_{j\geq 0} |T_{2j}|  \les \frac{M^2}{\beta^{5/2}} \int_0^t \norm{\omi}_{\gamma, r+\frac 12,\tau}^2.
\label{eq:T2j}
\end{align}

\mspace
{\bf The $T_{3j}$ and $T_{4j}$ bounds.} 
These are the only terms for which assumption \eqref{eq:omi:ass:3} is used.
In view of \eqref{eq:omi:ass:2}--\eqref{eq:omi:ass:3} and the Gagliardo-Nirenberg inequality in $y$,  we immediately obtain
\begin{align*}
&\sum_{j\geq 0} \left( |T_{3j}| - \frac 14 \norm{\frac{\pa_y \omi_j}{\sqrt{\pa_y \omega}}}_{L^2}^2 \right) 
\notag\\
&\les \sum_{j\geq 0} \left(\frac{M}{\delta_0^{3/2}} \norm{\frac{\pa_y \omi_j}{\sqrt{\pa_y \omega}}}_{L^2_x L^2_y} \norm{\omi_j}_{L^2_x L^\infty_y} - \frac 18 \norm{\frac{\pa_y \omi_j}{\sqrt{\pa_y \omega}}}_{L^2}^2 \right) 
\notag\\
&\les \sum_{j\geq 0} \left(\frac{M}{\delta_0^{3/2}} \norm{\frac{\pa_y \omi_j}{\sqrt{\pa_y \omega}}}_{L^2} \left( \norm{\omi_j}_{L^2} +\frac{1}{\delta_0^{1/4}} \norm{\omi_j}_{L^2}^{1/2} \norm{\frac{\pa_y \omi_j}{\sqrt{\pa_y \omega}}}_{L^2}^{1/2} \right)- \frac 18 \norm{\frac{\pa_y \omi_j}{\sqrt{\pa_y \omega}}}_{L^2}^2 \right) 
\notag\\
&\les    \frac{M^4}{\delta_0^7} \norm{ \omi}_{\gamma,r,\tau}^2
\end{align*}
and using \eqref{eq:omi:ass:1} combined with \eqref{eq:omi:ass:2}--\eqref{eq:omi:ass:3} we also obtain
\begin{align*}
&\sum_{j\geq 0} \left( |T_{4j}| - \frac 14 \norm{\frac{\pa_y \omi_j}{\sqrt{\pa_y \omega}}}_{L^2}^2 \right) 
\notag\\
& \les  \sum_{j\geq 0} \left( \frac{M^2}{\delta_0^2} \norm{\omi_j}_{L^2_x L^2_y} \norm{\omi_j}_{L^2_x L^\infty_y} - \frac 14 \norm{\frac{\pa_y \omi_j}{\sqrt{\pa_y \omega}}}_{L^2}^2 \right)
\notag\\
& \les  \sum_{j\geq 0} \left( \frac{M^2}{\delta_0^2} \norm{\omi_j}_{L^2} \left( \norm{\omi_j}_{L^2} + \frac{1}{\delta_0^{1/4}} \norm{\omi_j}_{L^2}^{1/2}  \norm{\frac{\pa_y \omi_j}{\sqrt{\pa_y \omega}}}_{L^2}^{1/2}\right) - \frac 14 \norm{\frac{\pa_y \omi_j}{\sqrt{\pa_y \omega}}}_{L^2}^2 \right)
\notag\\
&\les \frac{M^{8/3}}{\delta_0^3}  \norm{ \omi}_{\gamma,r,\tau}^2.
\end{align*}
Here we have also used the second term on the left side of \eqref{eq:omi:ass:1}, in order to estimate $\norm{\pa_x \pa_y \omega}_{L^\infty_x L^2_y}$. 
Thus, 
\begin{align}
\int_0^t \sum_{j\geq 0} \left( |T_{3j}| + |T_{4j}|  - \frac 14 \norm{\frac{\pa_y \omi_j}{\sqrt{\pa_y \omega}}}_{L^2}^2 \right) \les    \frac{M^4}{\delta_0^7} \int_0^t \norm{ \omi}_{\gamma,r,\tau}^2.
\label{eq:T34j}
\end{align}

\mspace
{\bf The $T_{5j}$ bound.} 
As it turns out, this term creates the most stringent assumption on $\gamma$, namely that $\gamma \leq 9/8$. Since $u\vert_{y=0,1} = 0$, using \eqref{eq:omi:ass:1} and \eqref{eq:omi:ass:3}, we have
\begin{align*}
|T_{5j}| 
&\leq \frac{1}{ \delta_0 }\norm{\frac{u}{y(1-y)}}_{L^\infty} \norm{y (1-y) \pa_x \ombl_j}_{L^2}\norm{ \omi_j }_{L^2}
\notag\\
&\les \frac{\norm{\omega}_{L^\infty}}{ \delta_0 } \frac{M_j}{M_{j+1} (j+1)^{1/2}}\norm{y \omega^\flat_{j+1}}_{L^2} (j+1)^{1/2} \norm{ \omi_j }_{L^2} 
\end{align*}
and thus, upon summing over $j$ and integrating on $[0,t]$ we arrive at
\begin{align*}
\int_0^t \sum_{j\geq 0} |T_{5j}| &\les \frac{M}{ \delta_0 } \left(\int_0^t \norm{y \omega^\flat }_{\gamma,r+\gamma-\frac 12,\tau}^2\right)^{1/2} \left(\int_0^t \norm{ \omi }_{\gamma,r+\frac 12,\tau}^2\right)^{1/2}.
\end{align*}
We now appeal to \eqref{eq:y:omega:flat:Gevrey} and to Lemma~\ref{lem:h:omega:in}, which is again legitimate for $r > r(\gamma)$ large enough. 
We obtain
\begin{align}
\int_0^t \sum_{j\geq 0} |T_{5j}| 
&\les \frac{M}{ \delta_0 \beta^{5/4} } \left(\int_0^t \abs{h}_{\gamma,r+2\gamma-\frac 74,\tau}^2\right)^{1/2} \left(\int_0^t \norm{ \omi }_{\gamma,r+\frac 12,\tau}^2\right)^{1/2}
\notag\\
&\les \frac{M^2}{ \delta_0 \beta^{5/4} } \left(\int_0^t \norm{\omi}_{\gamma,r+2\gamma-\frac 74,\tau}^2\right)^{1/2} \left(\int_0^t \norm{ \omi }_{\gamma,r+\frac 12,\tau}^2\right)^{1/2}
\notag\\
&\les \frac{M^2}{ \delta_0 \beta^{5/4} } \int_0^t \norm{ \omi }_{\gamma,r+\frac 12,\tau}^2.
\label{eq:T5j}
\end{align}
In the last inequality we have used that  $2\gamma - 7/4 \leq 1/2$, which holds since $\gamma \leq 9/8$.

\mspace
{\bf The $T_{6j}$ bound.} Similarly, using that $v\vert_{y=0,1} = 0$, we obtain
\begin{align*}
|T_{6j}| 
&\leq \frac{1}{ \delta_0 }\norm{\frac{v}{y(1-y)}}_{L^\infty} \norm{y (1-y) \pa_y \ombl_j}_{L^2}\norm{ \omi_j }_{L^2}
\notag\\
&\les \frac{\norm{\pa_x u}_{L^\infty}}{\delta_0} \norm{y \pa_y \omega^\flat_{j}}_{L^2} \norm{ \omi_j }_{L^2} \notag\\
&\les \frac{M}{\delta_0} \frac{\norm{y \pa_y\omega^\flat_{j}}_{L^2}}{(j+1)^{1/2}}  \left( (j+1)^{1/2} \norm{ \omi_j }_{L^2} \right),
\end{align*}
so that 
\begin{align*}
\int_0^t \sum_{j\geq 0} |T_{6j}| \les \frac{M}{\delta_0}\left( \int_0^t \norm{y \pa_y \omega^\flat}_{\gamma,r-\frac 12,\tau}^2 \right)^{1/2} \left(\int_0^t  \norm{\omi}_{\gamma,r+\frac 12,\tau}^2 \right)^{1/2}
\end{align*}
Using \eqref{eq:y:dy:omega:flat:Gevrey}, and then Lemma~\ref{lem:h:omega:in} (applicable for $r > r(\gamma)$ large enough, by \eqref{eq:omi:ass:1}), we obtain
\begin{align}
\int_0^t \sum_{j\geq 0} |T_{6j}| 
&\les \frac{M}{\delta_0 \beta^{3/4} }\left( \int_0^t \abs{h}_{\gamma,r+\gamma-\frac 74,\tau}^2 \right)^{1/2} \left(\int_0^t  \norm{\omi}_{\gamma,r+\frac 12,\tau}^2 \right)^{1/2}
\notag\\
&\les \frac{M^2}{\delta_0 \beta^{3/4} }\left( \int_0^t \norm{\omi}_{\gamma,r,\tau}^2 \right)^{1/2} \left(\int_0^t  \norm{\omi}_{\gamma,r+\frac 12,\tau}^2 \right)^{1/2}
\label{eq:T6j}
\end{align}
since $\gamma \leq 7/4$.

\mspace
{\bf The $T_{7j}$ bound.} For $T_{7j}$ we directly estimate
\begin{align*}
\sum_{j\geq 0} |T_{7j}| \leq \sum_{j\geq 0}  \frac{1}{ \delta_0 } (j+1)^{-1/2} \norm{v^\flat_j}_{L^2}  (j+1)^{1/2} \norm{  \omi_j }_{L^2} \les \frac{1}{\delta_0} \norm{v^\flat}_{\gamma,r-\frac 12,\tau} \norm{\omi}_{\gamma,r+\frac 12,\tau}.
\end{align*}
Integrating in time, appealing to \eqref{eq:v:flat:Gevrey}, and still using Lemma~\ref{lem:h:omega:in} we obtain
\begin{align}
\int_0^t \sum_{j\geq 0} |T_{7j}|
&\les \frac{1}{\delta_0 \beta^{7/4}} \left( \int_0^t \abs{h}_{\gamma,r+2\gamma-\frac 94,\tau}^2 \right)^{1/2} \left(\int_0^t  \norm{\omi}_{\gamma,r+\frac 12,\tau}^2 \right)^{1/2}
\notag\\
&\les \frac{M}{\delta_0 \beta^{7/4}} \left( \int_0^t \norm{\omi}_{\gamma,r+2\gamma-\frac 94,\tau}^2 \right)^{1/2} \left(\int_0^t  \norm{\omi}_{\gamma,r+\frac 12,\tau}^2 \right)^{1/2}
\notag\\
&\les \frac{M}{\delta_0 \beta^{7/4}} \int_0^t  \norm{\omi}_{\gamma,r+\frac 12,\tau}^2 
\label{eq:T7j}
\end{align}
as $2\gamma - 9/4 \leq 1/2$.

\mspace
{\bf The $T_{8j}$ bound.} We note that
\begin{align*}
\frac{M_j}{M_k M_{j-k+1}} {j \choose k} 
\les {j \choose k}^{1-\gamma} \frac{(j+1)^r}{(k+1)^r (j-k+1)^{r-\gamma}},
\end{align*}
and for $1 \leq k \leq [j/2]$ it is convenient to use  ${j \choose k} \geq (j-k+1)/k$. We obtain
\begin{align*}
|T_{8j}| 
&\les \sum_{k=1}^{[j/2]} \frac{j^{1/2} (j-k+1)^{1/2}}{(k+1)^{r-\gamma+1} } \left| \int_\Omega u_k  \omega_{j-k+1} \frac{\omi_j}{\pa_y \omega} \right|  + \sum_{k=[j/2]+1}^{j} \frac{1}{(j-k+1)^{r-\gamma}} \left| \int_\Omega u_k  \omega_{j-k+1} \frac{\omi_j}{\pa_y \omega}  \right|  \notag\\
&=: T_{8j,{\rm low}} + T_{8j, {\rm high}}.
\end{align*}
In order to estimate $T_{8j,{\rm low}}$, we split $\omega_{j-k+1} = \omega_{j-k+1}^{in} + \omega_{j-k+1}^{bl}$. First, using the Gagliardo-Nirenberg inequality on $\Omega$ and the Poincar\'e inequality in $x$ (since $k\geq 1$) we may bound
\begin{align}
\norm{\omega_k}_{L^\infty} 
&\les \norm{\omega_k}_{L^2} + \norm{\pa_x \omega_k}_{L^2} + (\norm{\omega_k}_{L^2}^{1/2} +  \norm{\pa_x \omega_k}_{L^2}^{1/2}) (\norm{\pa_y \omega_k}_{L^2}^{1/2} + \norm{\pa_{x} \pa_y \omega_k}_{L^2}^{1/2}) \notag\\
&\les \norm{\pa_x \omega_k}_{L^2} +   \norm{\pa_x \omega_k}_{L^2}^{1/2} \norm{\pa_{x} \pa_y \omega_k}_{L^2}^{1/2}
\notag\\
&\les k^\gamma \left( \norm{\omega_{k+1}}_{L^2} +  \norm{ \pa_y \omega_{k+1}}_{L^2} \right)
\label{eq:omega:k:L:infty}
\end{align} 
from which we conclude that
we estimate
\begin{align*}
\left| \int_\Omega u_k  \ombl_{j-k+1} \frac{\omi_j}{\pa_y \omega} dx dy \right|
&\les \frac{1}{ \delta_0 }\norm{\frac{u_k}{y(1-y)}}_{L^\infty} \norm{y (1-y) \omega^{bl}_{j-k+1}}_{L^2} \norm{ \omi_j  }_{L^2}
\notag\\
&\les \frac{k^\gamma}{ \delta_0 } \left( \norm{\omega_{k+1}}_{L^2} + \norm{ \pa_y \omega_{k+1}}_{L^2} \right) \norm{y \omega^\flat_{j-k+1}}_{L^2} \norm{ \omi_j }_{L^2}
\notag\\
&\les \frac{k^{\gamma+r/2}}{ \delta_0 } \frac{\norm{\omega_{k+1}}_{L^2} + \norm{ \pa_y \omega_{k+1}}_{L^2} }{k^{r/2}} \norm{y \omega^\flat_{j-k+1}}_{L^2} \norm{ \omi_j }_{L^2}.
\end{align*}
Similarly, 
\begin{align*}
\left| \int_\Omega u_k  \omi_{j-k+1} \frac{\omi_j}{\pa_y \omega} dx dy \right|
&\les \frac{k^{\gamma+r/2}}{ \delta_0 }  \frac{ \norm{\omega_{k+1}}_{L^2} +   \norm{ \pa_y \omega_{k+1}}_{L^2} }{k^{r/2}} \norm{ \omi_{j-k+1} }_{L^2}
\norm{ \omi_j }_{L^2}
\end{align*}
so that from the discrete Young and H\"older inequalities, we obtain 
\begin{align}
& \sum_{j\geq 0} T_{8j,{\rm low}} \notag \\
& \les  \frac{1}{\delta_0} \left( \sum_{j\neq 0} \frac{j^{\gamma+r/2}}{(j+1)^{r-\gamma+1}} \frac{ \norm{\omega_{j+1}}_{L^2} +   \norm{ \pa_y \omega_{j+1}}_{L^2} }{j^{r/2}} \right)  \,  \left( \norm{y\,  \omega^\flat}_{\gamma,r+\frac 12,\tau} + \norm{\omi}_{\gamma,r+\frac 12,\tau}\right)  \norm{\omi}_{\gamma,r+\frac 12,\tau}
\notag \\ 
&\les \frac{1}{\delta_0} \left( \norm{\omega}_{\gamma,\frac r2} + \norm{\pa_y \omega}_{\gamma, \frac r2} \right)  \left( \norm{y\,  \omega^\flat}_{\gamma,r+\frac 12,\tau} + \norm{\omi}_{\gamma,r+\frac 12,\tau}\right)  \norm{\omi}_{\gamma,r+\frac 12,\tau}
\notag\\
&\les\frac{M}{\delta_0}  \left( \norm{y\,  \omega^\flat}_{\gamma,r+\frac 12,\tau} + \norm{\omi}_{\gamma,r+\frac 12,\tau}\right) \norm{\omi}_{\gamma,r+\frac 12,\tau}.
\label{eq:T8j:low}
\end{align}
For the second inequality, we have assumed  that $r/2 - 2\gamma +1 > 1/2$ (so that $\frac{j^{\gamma+r/2}}{(j+1)^{r-\gamma+1}} $ is square summable), and for the third inequality we have appealed to \eqref{eq:omi:ass:1}.  

\mspace
In order to bound $T_{8j,{\rm high}}$, we use that $u_k \vert_{y=0,1} = 0$, and the 1D Poincar\'e inequality  to  obtain
\begin{align*}
\left| \int_\Omega u_k  \omega_{j-k+1} \frac{\omi_j}{\pa_y \omega} dx dy \right|
&\les \frac{1}{\delta_0} \norm{u_k}_{L^2_x L^\infty_y} \norm{\omega_{j-k+1}}_{L^\infty_x  L^2_y} \norm{ \omi_j }_{L^2}
\notag\\
&\les \frac{ (j-k+1)^\gamma }{\delta_0}  \norm{\omega_k}_{L^2} \norm{\omega_{j-k+2}}_{L^2} \norm{\omi_j}_{L^2}
\notag\\
&\les \frac{ (j-k+1)^\gamma }{\delta_0} \frac{\norm{\omi_k}_{L^2} + \norm{\ombl_k}_{L^2}}{(k+1)^{1/2}} \norm{\omega_{j-k+2}}_{L^2} (j+1)^{1/2} \norm{\omi_j}_{L^2}.
\end{align*}
We again rely on discrete Young and H\"older inequalities,  assume that  $r >  \frac{8}{3} \gamma + \frac{2}{3}$ (so that 
$(j+1)^{2\gamma-3r/4}$ is square summable), and    use \eqref{eq:omi:ass:1} to arrive at
\begin{align}
 \sum_{j\geq 0} T_{8j,{\rm high}} & \les  \frac{1}{\delta_0}  \left( \sum_j (j+1)^{2\gamma - {3r/4}}   \frac{\norm{\omega_j}_{L^2}}{(j+1)^{{r/4}}} \right) \,  \norm{\omi}_{\gamma,r+\frac 12,\tau} \left( \norm{\omi}_{\gamma,r,\tau}  + \norm{\omega^\flat}_{\gamma,r- \frac 12,\tau}  \right)
 \notag \\
 & \les \frac{M}{\delta_0} \norm{\omi}_{\gamma,r+\frac 12,\tau} \left( \norm{\omi}_{\gamma,{r-\frac{1}{2}},\tau}  + \norm{\omega^\flat}_{\gamma,r- \frac 12,\tau}  \right).
\label{eq:T8j:high}
\end{align}

\mspace
Combining \eqref{eq:T8j:low}, \eqref{eq:T8j:high},  integrating in time, using \eqref{eq:omega:flat:Gevrey}, \eqref{eq:y:omega:flat:Gevrey}, and Lemma~\ref{lem:h:omega:in} (which is applicable by assumption \eqref{eq:omi:ass:1}),  we arrive at
\begin{align}
\int_0^t \sum_{j\geq 0} T_{8j} 
\les & \: 
\frac{M}{\delta_0}  \left(\left( \int_0^t  \norm{y\,  \omega^\flat}_{\gamma,r+\frac 12,\tau}^2 \right)^{1/2} + \left( \int_0^t  \norm{\omega^\flat}_{\gamma,r-\frac 12,\tau}^2 \right)^{1/2}  \right)   \left( \int_0^t \norm{\omi}_{\gamma,r+\frac 12,\tau}^2 \right)^{1/2}  \notag\\
 + & \:   \frac{M}{\delta_0} \int_0^t \norm{\omi}_{\gamma,r+\frac 12,\tau}^2 
\notag\\
\les & 
\frac{M}{\delta_0 \beta^{3/4}}  \left( \left( \int_0^t  \abs{h}_{\gamma,r + \gamma - \frac 34,\tau}^2 \right)^{1/2} + \left( \int_0^t  \abs{h}_{\gamma,r+\gamma-\frac 54,\tau}^2 \right)^{1/2} \right) \left(  \int_0^t \norm{\omi}_{\gamma,r+\frac 12,\tau}^2 \right)^{1/2}  \notag\\
+  & \: \frac{M}{\delta_0} \int_0^t \norm{\omi}_{\gamma,r+\frac 12,\tau}^2 
\notag\\
\les &\:  \frac{M^2}{\delta_0 \beta^{3/4}}  \left( \int_0^t  \norm{\omi}_{\gamma,r + \gamma - \frac 34,\tau}^2 \right)^{1/2}  \left( \int_0^t \norm{\omi}_{\gamma,r+\frac 12,\tau}^2 \right)^{1/2} \: + \: \frac{M}{\delta_0}  \int_0^t \norm{\omi}_{\gamma,r+\frac 12,\tau}^2 \notag \\
\les & \: \frac{M^2}{\delta_0} \int_0^t \norm{\omi}_{\gamma,r+\frac 12,\tau}^2
\label{eq:T8j}
\end{align}
since $\gamma \leq 5/4$.

\mspace
{\bf The $T_{9j}$ bound.} In order to estimate $T_{9j}$ we note that for $1\leq k \leq j-1$ we have
\begin{align*}
\frac{M_j}{M_k M_{j-k}} {j\choose k} \les {j\choose k}^{1-\gamma}   \frac{(j+1)^r}{(k+1)^r (j-k+1)^{r}} 
\les \left( \frac{j}{\min\{ k, j-k \}}\right)^{1-\gamma} \frac{1}{( \min\{k, j-k\})^r}
\end{align*}
and similarly to $T_{8j}$ we decompose 
\begin{align}
T_{9j} 
&\les \sum_{k=1}^{[j/2]} \frac{1}{k^{r}} \left| \int_\Omega v_k  \pa_y \omega_{j-k} \frac{\omi_j}{\pa_y \omega} \right| 
+ \sum_{k=[j/2]+1}^{j-1}  \frac{1}{(j-k)^{r-\gamma+1} j^{\gamma-1}} \left| \int_\Omega v_k  \pa_y \omega_{j-k} \frac{\omi_j}{\pa_y \omega} \right|
\notag\\
&=: T_{9j,{\rm low}} + T_{9j,{\rm high}}.
\end{align}
First we treat the case $k \leq j/2$. Using the Poincar\'e inequality in $y$ (which is allowed since $u_{k+1} \vert_{y=0,1} = 0$)  we obtain
\begin{align*}
\left| \int_\Omega v_k  \pa_y \omega_{j-k} \frac{\omi_j}{\pa_y \omega} dx dy\right|
&\les \frac{1}{\delta_0} \norm{\frac{v_k}{y(1-y)}}_{L^\infty} \norm{y(1-y)\pa_y \omega_{j-k}}_{L^2} \norm{\omi_j}_{L^2} \notag\\
&\les \frac{1}{\delta_0} \norm{\pa_x u_k}_{L^\infty} \left( \norm{\pa_y \omi_{j-k}}_{L^2} + \norm{y \pa_y \omega^\flat_{j-k}}_{L^2} \right) \norm{\omi_j}_{L^2} \notag\\
&\les \frac{k^{\gamma}}{\delta_0} \norm{\omega_{k+1}}_{L^\infty_x L^2_y} \left( \norm{\pa_y \omi_{j-k}}_{L^2} +  \norm{y \pa_y \omega^\flat_{j-k}}_{L^2} \right)  \norm{\omi_j}_{L^2}
\end{align*}
Furthermore, using the 1D Gagliardo-Nirenberg and Poincar\'e inequalities in $x$, for $1\leq k \leq [j/2]$ we arrive at
\begin{align*}
\left| \int_\Omega v_k  \pa_y \omega_{j-k} \frac{\omi_j}{\pa_y \omega} dx dy\right|  \les \frac{k^{2\gamma+r/4}}{\delta_0} \frac{\norm{\omega_{k+2}}_{L^2}}{k^{r/4}} \left( \norm{\pa_y \omi_{j-k}}_{L^2} +  \norm{y \pa_y \omega^\flat_{j-k}}_{L^2}   \right)   \norm{\omi_j}_{L^2}.
\end{align*}
Summing over $j$, assumng that $r > \frac{8}{3}\gamma +\frac{2}{3}$, and appealing to~\eqref{eq:omi:ass:1} we obtain
\begin{align}
\sum_{j\geq 0}  |T_{9j,{\rm low}}| 
&\les \frac{\norm{\omega}_{\gamma,\frac{3r}{4},\tau}}{\delta_0}   \left( \norm{\pa_y \omi}_{\gamma,r,\tau} + \norm{y \pa_y \omega^\flat}_{\gamma,r,\tau} \right) \norm{\omi}_{\gamma,r,\tau}
\notag\\
&\les \frac{M}{\delta_0}   \left( \norm{\pa_y \omi}_{\gamma,r,\tau} + \norm{y \pa_y \omega^\flat}_{\gamma,r ,\tau} \right) \norm{\omi}_{\gamma,r,\tau}
\label{eq:T9j:low}.
\end{align}

\mspace
For the case $k \geq j/2$, we first note that the compatibility condition \eqref{eq:u:compatibility}  allows us to write
\begin{align*}
\int_\T \int_0^1 u_{k+1}^2 dy dx =  \int_\T \int_0^1 u_{k+1} u_{k+1}^{bl}  dy dx +  \int_\T \int_0^1 u_{k+1} \left( u_{k+1}^{in}  - \int_0^1 u_{k+1}^{in} dz\right) dy dx.
\end{align*}
By Cauchy-Schwartz and the Poincar\'e inequality in $y$ (for zero mean functions) we  conclude
\begin{align*}
\norm{u_{k+1}}_{L^2}^2 \les \norm{u_{k+1}^{bl}}_{L^2}^2 + \norm{\omega_{k+1}^{in}}_{L^2}^2.
\end{align*}
Then we similarly estimate
\begin{align*}
&\left| \int_\Omega v_k  \pa_y \omega_{j-k} \frac{\omi_j}{\pa_y \omega} dx dy\right| 
\notag\\
&\quad \les \frac{1}{\delta_0} \norm{ v_k }_{L^2_x L^\infty_y} \norm{ \pa_y \omega_{j-k}}_{L^\infty_x L^2_y} \norm{\omi_j}_{L^2}
\notag\\
&\quad \les \frac{1}{\delta_0} \norm{\pa_x u_k}_{L^2}  \norm{\pa_x \pa_y \omega_{j-k}}_{L^2}  \norm{\omi_j}_{L^2}
\notag\\
&\quad \les  {\frac{(j-k)^{\gamma} j^{ \gamma-1}}{\delta_0}}   {k^{1/2}}  \norm{u_{k+1}}_{L^2}  \norm{\pa_y \omega_{j-k+1}}_{L^2} \left(j^{1/2} \norm{\omi_j}_{L^2}\right)
\notag\\
&\quad \les  {\frac{(j-k)^{\gamma+r/2} j^{ \gamma-1}}{\delta_0}} \left(  {k^{1/2}} \norm{\omi_{k+1}}_{L^2}  +  {k^{1/2}}  \norm{u^\flat_{k+1}}_{L^2}  \right) \frac{\norm{\pa_y \omega_{j-k+1}}_{L^2}}{(j-k)^{ {r/2}}} \left(j^{1/2} \norm{\omi_j}_{L^2}\right).
\end{align*}
Summing over $j$, noting that the powers of $j$ precisely cancel, we find for $r > r(\gamma)$ large enough:
\begin{align}
\sum_{j\geq 0}  |T_{9j,{\rm high}}|  
&\les \frac{\norm{\pa_y \omega}_{\gamma,\frac r2}}{\delta_0} \left( \norm{\omi}_{\gamma,r+\frac 12,\tau} + \norm{u^\flat}_{\gamma,r +\frac 12,\tau}\right) \norm{\omi}_{\gamma,r+\frac 12,\tau} 
\notag\\
&\les \frac{M}{\delta_0} \left( \norm{\omi}_{\gamma,r+\frac 12,\tau} + \norm{u^\flat}_{\gamma,r+\frac 12,\tau}\right) \norm{\omi}_{\gamma,r+\frac 12,\tau}.
\label{eq:T9j:high}
\end{align} 
Integrating in time the sum of \eqref{eq:T9j:low} and \eqref{eq:T9j:high},  appealing to \eqref{eq:omega:flat:Gevrey} and \eqref{eq:y:dy:omega:flat:Gevrey}, and using Lemma~\ref{lem:h:omega:in} (which is applicable for $r > r(\gamma)$ large enough, by assumption \eqref{eq:omi:ass:1}), we obtain
\begin{align}
\int_0^t \sum_{j\geq 0} |T_{9j}| -  {\frac{1}{2}} \int_0^t  \norm{\pa_y \omi}_{\gamma,r,\tau}^2
&\les \int_0^t \left( \norm{y \pa_y \omega^\flat}_{\gamma,r,\tau}^2  + \norm{u^\flat}_{\gamma,r+\frac 12,\tau}^2\right)  + \frac{M^2}{\delta_0^2} \int_0^t \norm{\omi}_{\gamma,r+\frac 12,\tau}^2
\notag\\
&\les \frac{1}{\beta^{3/2}} \int_0^t  \abs{h}_{\gamma,r+\gamma -\frac 34,\tau}^2  + \frac{M^2}{\delta_0^2} \int_0^t \norm{\omi}_{\gamma,r+\frac 12,\tau}^2
\notag\\
&\les \left( \frac{M^2}{\beta^{3/2}} +  \frac{M^2}{\delta_0^2} \right) \int_0^t \norm{\omi}_{\gamma,r+\frac 12,\tau}^2
\label{eq:T9j}
\end{align}
since $\gamma - 3/4 \leq \frac{1}{2}$.

\mspace
{\bf Conclusion of the proof.}
Inserting the bounds \eqref{eq:T1j}, \eqref{eq:T2j}, \eqref{eq:T34j}, \eqref{eq:T5j}, \eqref{eq:T6j}, \eqref{eq:T7j}, \eqref{eq:T8j}, and \eqref{eq:T9j} into estimate \eqref{eq:omega:in:j:2}, we obtain
\begin{align}
&\norm{\omi(t)}_{\gamma,r,\tau(t)}^2 +2  \beta \int_0^t   \norm{\omi}_{\gamma,r+1/2,\tau}^2  ds +  \int_0^t \norm{\pa_y \omi}_{\gamma,r,\tau}^2 ds -  \frac{1}{\delta_0^2} \norm{\omi_0}_{\gamma,r,\tau_0}^2 \notag\\
&\les    \left(\frac{1}{\delta_0^3} + \frac{M^4}{\delta_0^8}  + \frac{M}{\delta_0 \beta^{3/2}} \right) \int_0^t \norm{ \omi}_{\gamma,r,\tau}^2 ds 
\notag\\
&\qquad + \left( \frac{M^2}{\delta_0 \beta^{5/2}} + \frac{M^2}{\delta_0^2 \beta^{5/4}} + \frac{M^2}{\delta_0 \beta^{3/2}} + \frac{M}{\delta_0^2 \beta^{7/4}} + \frac{M^2}{\delta_0^2 \beta^{3/4}} + \frac{M^2}{\delta_0^3}\right) \int_0^t \norm{\omi}_{\gamma,r+\frac 12,\tau}^2   ds.
\label{eq:omega:in:j:3}
\end{align}
Note that $\norm{\omi}_{\gamma,r,\tau}^2 \leq  \norm{\omi}_{\gamma,r+\frac 12,\tau}^2$, so that we may combine the last two terms on the right side of \eqref{eq:omega:in:j:3}.
Choosing $\beta_0$ large enough, depending on $M\geq 1$, $\delta_0\leq 1$, and the implicit constant in \eqref{eq:omega:in:j:3}, for any $\beta \geq \beta_0$ we obtain
\begin{align*}
\norm{\omi(t)}_{\gamma,r,\tau(t)}^2 +  \beta  \int_0^t  \norm{\omi}_{\gamma,r+ \frac 12,\tau}^2   ds + \int_0^t \norm{\pa_y \omi}_{\gamma,r,\tau}^2 ds  \le \frac{1}{\delta_0^2}  \norm{\omi_0}_{\gamma,r,\tau_0}^2 .
\end{align*}
The estimate \eqref{eq:prop:omega:in:1} now follows directly from the above estimate.

\mspace
Finally, in order to prove \eqref{eq:omega:Gevrey:0}, we appeal to \eqref{eq:lem:BL:2:a}, Lemma~\ref{lem:h:omega:in}, and estimate \eqref{eq:prop:omega:in:1}, to obtain
\begin{align}
\sup_{[0,t]} \norm{\omega^\flat}_{\gamma,r-\gamma+\frac 34 ,\tau(s)}^2 
&\les \frac{1}{\beta^{1/2}} \int_0^t \abs{h(s)}_{\gamma,r + \frac 12,\tau(s)}^2 ds 
\notag\\
&\les \frac{M^2}{\beta^{1/2}} \int_0^t \norm{\omi(s)}_{\gamma,r+\frac 12,\tau(s)}^2 ds 
\leq \frac{1}{2\delta_0^2}  \norm{\omi(0)}_{\gamma,r,\tau_0}^2 
\label{eq:aux:1}
\end{align}
upon ensuring that $\beta$ is sufficiently large, depending on $M, \delta_0$. Moreover, from \eqref{eq:dy:omega:flat:Gevrey} and \eqref{eq:omega:flat:Gevrey} we similarly obtain  
\begin{align}
\int_0^t \norm{\pa_y \omega^\flat(s)}_{\gamma,r- \gamma + \frac 34,\tau(s)}^2 ds  + \beta   \int_0^t \norm{\omega^\flat(s)}_{\gamma,r - \gamma + \frac 54 ,\tau(s)}^2 ds  
&\les
\frac{1}{\beta^{1/2}} \int_0^t \abs{h(s)}_{\gamma,r+\frac 12,\tau(s)}^2 ds
\notag\\
&\leq  \frac{1}{2 \delta_0^2}  \norm{\omi(0)}_{\gamma,r,\tau_0}^2 
\label{eq:aux:2}
\end{align}
as above.
Summing \eqref{eq:aux:1}--\eqref{eq:aux:2} with \eqref{eq:prop:omega:in:1} (and using $(a+b)^2 \le 2 a^2 + 2 b^2$) we obtain
\begin{align*}
&\sup_{s\in [0,t]} \norm{\omega(s)}_{\gamma,r-\gamma+\frac{3}{4},\tau(s)}^2 + \int_0^t \norm{\pa_y \omega(s)}_{\gamma,r-\gamma +  \frac 34,\tau(s)}^2 ds + \beta \int_0^t \norm{\omega(s)}_{\gamma,r-\gamma+\frac 54,\tau(s)}^2 ds 
\notag\\
&\qquad \leq \frac{4}{\delta_0^2}  \norm{\omi(0)}_{\gamma,r,\tau_0}^2  
\end{align*}
by using that $\gamma \leq 5/4$.
This concludes the proof of \eqref{eq:omega:Gevrey:0}.
\end{proof}
\noindent
As an easy consequence of the estimate \eqref{eq:omega:Gevrey:0}, we state:
\begin{corollary}
\label{cor:omega:Gevrey}
Let $M, \delta_0$ and $\gamma \in [1,9/8]$ be given. For $r \ge r_0(\gamma)$,  $\beta \ge \beta_0$ and 
$T$ such that $\tau(t) \ge \tau_1$ for all $t \in [0,T]$, if 
\begin{equation} \label{eq:cond:omega}
\frac{4}{\delta_0^2} \|  \omega_0 \|_{\gamma,r,\tau_0} \le \frac{M}{2} 
\end{equation}
then 
$$ \sup_{t \in [0,T]} \|  \omega(t) \|_{\gamma,\frac{3r}{4},\tau(t)} \le \frac{M}{2}. $$
\end{corollary}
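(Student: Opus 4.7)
The plan is to read off the corollary directly from estimate \eqref{eq:omega:Gevrey:0} of Proposition~\ref{prop:omega:in} by (i) comparing the exponents $r-\gamma+\tfrac34$ and $\tfrac{3r}{4}$ on the weight $(j+1)$, and (ii) using the smallness hypothesis \eqref{eq:cond:omega} to absorb the factor $2/\delta_0$ appearing when one takes a square root of \eqref{eq:omega:Gevrey:0}.

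First, I would fix $r_0(\gamma)$ to be large enough to satisfy all the lower bounds required in the proof of Proposition~\ref{prop:omega:in} (those of the form $r > r(\gamma)$), and in addition to satisfy the inequality
\[
\tfrac{3r}{4} \le r - \gamma + \tfrac34, \qquad \text{i.e.}\qquad r \ge 4\gamma - 3 .
\]
Since $\gamma \in [1, 9/8]$ we have $4\gamma - 3 \le 3/2$, so this extra requirement is harmless. Once this is arranged, the monotonicity of $\norm{\cdot}_{\gamma,r,\tau}$ in the second parameter (noted right after \eqref{eq:Mj:def}) gives, for every $t \in [0,T]$,
\[
\norm{\omega(t)}_{\gamma,\frac{3r}{4},\tau(t)} \;\le\; \norm{\omega(t)}_{\gamma, r - \gamma + \frac{3}{4}, \tau(t)} .
\]

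Next I apply Proposition~\ref{prop:omega:in}, whose hypotheses \eqref{eq:omi:ass:1}--\eqref{eq:omi:ass:3} are the standing a priori assumptions on the solution under consideration (so in particular $\beta \ge \beta_0$ and $T$ is small enough that $\tau(t) \ge \tau_1$). Estimate \eqref{eq:omega:Gevrey:0}, after dropping the nonnegative dissipation and penalization terms and taking square roots, yields
\[
\sup_{s \in [0,t]} \norm{\omega(s)}_{\gamma, r - \gamma + \frac{3}{4}, \tau(s)} \;\le\; \frac{2}{\delta_0}\, \norm{\omega(0)}_{\gamma,r,\tau_0} .
\]
Combining this with the previous comparison and with the smallness assumption \eqref{eq:cond:omega} (note that $2/\delta_0 \le 4/\delta_0^2$ for $\delta_0 \le 1$, which we may assume without loss of generality since increasing $M$ or decreasing $\delta_0$ only strengthens the hypotheses of the proposition), one obtains
\[
\sup_{t \in [0,T]} \norm{\omega(t)}_{\gamma,\frac{3r}{4},\tau(t)} \;\le\; \frac{2}{\delta_0}\, \norm{\omega_0}_{\gamma,r,\tau_0} \;\le\; \frac{4}{\delta_0^2}\, \norm{\omega_0}_{\gamma,r,\tau_0} \;\le\; \frac{M}{2},
\]
which is the desired conclusion.

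There is no real obstacle here: the corollary is purely a matter of bookkeeping exponents in the Gevrey weight and converting the bound on $\omega(0)$ in \eqref{eq:cond:omega} into the claimed bound on $\omega(t)$. The only subtlety is making sure $r_0(\gamma)$ is chosen to accommodate both the technical lower bounds inherited from Proposition~\ref{prop:omega:in} and the algebraic constraint $r \ge 4\gamma - 3$; this is the reason $r_0$ is allowed to depend on $\gamma$. As usual in this bootstrap framework, the point of the corollary is that the output exponent $3r/4$ and the output bound $M/2$ are strictly better than the input hypothesis \eqref{eq:omi:ass:1}, so that the corollary can be fed into a continuation argument later on.
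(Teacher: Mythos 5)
Your proof is correct and coincides with the paper's (unstated, deemed immediate) argument: drop the dissipative terms in \eqref{eq:omega:Gevrey:0}, take square roots, use the monotonicity of $\norm{\cdot}_{\gamma,r,\tau}$ in $r$ together with $\tfrac{3r}{4}\le r-\gamma+\tfrac34$ (automatic since $r\ge r_0(\gamma)$ and $\gamma\le 9/8$), and conclude via \eqref{eq:cond:omega} using $\delta_0\le 1$ (which is anyway forced by \eqref{eq:omi:ass:2} and is the paper's standing convention). Your explicit acknowledgment that the a priori hypotheses \eqref{eq:omi:ass:1}--\eqref{eq:omi:ass:3} of Proposition~\ref{prop:omega:in} are implicitly in force matches how the corollary is used in the bootstrap of Section~\ref{sec:proof:main}.
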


\section{Estimates for $\pa_t \omega$}
In order to emphasize the linear nature of the estimates in this section we denote
$\pa_t \omega = \dot \omega$. The equation obeyed by $\dot{\omega}$ is 
\begin{subequations}
\label{eq:dt:omega}
\begin{align}
 \pa_t \dot{\omega} - \pa_y^2\dot{\omega} + (u \pa_x + v \pa_y) \dot{\omega} + (\dot u \pa_x + \dot v \pa_y) \omega = 0 & \label{eq:dt:omega:a}\\
 \pa_y\dot{\omega}|_{y=0,1} = (\tilde{\dot \omega}\vert_{y=1} - \tilde{\dot\omega}\vert_{y=0})  - \pa_x \left( 2\int_0^1 u \, \dot{u} \; dy\right). & \label{eq:dt:omega:b}
\end{align}
\end{subequations}

\begin{proposition}
\label{prop:omega:dot}
Let $M, \delta_0$ and $\gamma \in [1,9/8]$ be given. There exists $r_1 = r_1(\gamma) \ge r_0$ such that: for all $r,r'$ satisfying $r' \ge r_1$,  $\frac{3r}{4} - r' \ge r_1$, one can find $\beta_1 = \beta_1(M,\delta_0,\tau_0,\tau_1,r,r',\gamma) \ge \beta_0$ satisfying: if $\beta \geq \beta_0$, if $T \leq 1$ small enough so that $\tau(t) \ge \tau_1$ for all $t\in [0,T]$, and if \eqref{eq:omi:ass:1}--\eqref{eq:omi:ass:3} hold, we have
\begin{align}
& \sup_{s\in[0,t]} \norm{\dot{\omega} (s)}_{\gamma,r'-\gamma+\frac{3}{4},\tau(s)}^2  + \int_0^t \norm{\pa_y \dot{\omega}(s)}_{\gamma,r'-\gamma+\frac{3}{4},\tau(s)}^2 ds + \beta \int_0^t \norm{\dot{\omega} (s)}_{\gamma,r'-\gamma+\frac{5}{4},\tau(s)}^2 ds  \notag\\
&\qquad   \leq  \frac{4}{\delta_0^2} \norm{\dot{\omega}(0)}_{\gamma,r',\tau_0}^2.
\label{eq:omega:Gevrey:1}
\end{align}
\end{proposition}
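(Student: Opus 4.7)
The plan is to treat \eqref{eq:dt:omega} as a \emph{linear} equation for $\dot\omega$ whose coefficients $u,v,\omega,\pa_y\omega$ are already controlled by \eqref{eq:omi:ass:1}--\eqref{eq:omi:ass:3} and Corollary~\ref{cor:omega:Gevrey}, and to rerun the scheme of Proposition~\ref{prop:omega:in} with the cosmetic change that every occurrence of $\omega$ in an ``unknown'' slot is replaced by $\dot\omega$. Setting
\[
\dot h(x,t) = -2\int_0^1 u\,\dot u\, dy + 2\int_\T\!\int_0^1 u\,\dot u\, dy\, dx,
\]
I define $\dot\omega^\flat$ as the solution of \eqref{eq:flat} with $h$ replaced by $\dot h$, and introduce $\dot\omega = \dot\omega^{bl} + \dot\omega^{in}$, $\dot u = \dot u^{bl} + \dot u^{in}$, $\dot v = \dot v^{bl} + \dot v^{in}$ through \eqref{eq:bl:def}--\eqref{eq:in:def}. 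The interior quantity $\dot\omega^{in}$ then obeys an equation whose structure matches \eqref{eq:omega:in} line-by-line, with the sole additional forcing $-(\dot u\pa_x + \dot v\pa_y)\omega$ inherited from \eqref{eq:dt:omega:a}. Lemmas~\ref{lem:BL:1} and~\ref{lem:BL:2} transfer verbatim to $\dot\omega^\flat,\dot u^\flat,\dot v^\flat$ with $\dot h$ in place of $h$.

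The first substantive step is to adapt Lemma~\ref{lem:h:omega:in} to $\dot h$. Both compatibility conditions $\pa_x\int_0^1 u\, dy = 0$ and its time derivative $\pa_x\int_0^1 \dot u\, dy = 0$ hold, so the Leibniz expansion of \eqref{eq:hj:Leibniz} and the Hardy/$1$D Gagliardo--Nirenberg/Poincar\'e argument proving \eqref{estim_hj} go through with one factor of $u$ replaced by $\dot u$. The asymmetric placement of the two factors forces the $u$-side to be absorbed into the constant $M$ via $\sup_{[0,T]}\norm{\omega}_{\gamma,\frac{3r}{4},\tau}\le M$, while the $\dot u$-side is expressed in terms of $\dot\omega^{in}$ and boundary-layer pieces that are reabsorbed through the $\beta^{-5/2}$ gain in Lemma~\ref{lem:BL:1}. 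The output reads
\[
\int_0^t |\dot h(s)|_{\gamma,r',\tau(s)}^2\, ds \les M^2 \int_0^t \norm{\dot\omega^{in}(s)}_{\gamma,r',\tau(s)}^2\, ds
\]
for $\beta$ large enough, provided $r' \le \frac{3r}{4} - r_1$ so that the coefficient weights on the $u$-side are summable.

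Next I perform the hydrostatic energy estimate on $\dot\omega^{in}_j$: test the $\dot\omega^{in}_j$ equation by $\dot\omega^{in}_j/\pa_y\omega$, integrate, and sum in $j$. The nine terms $T_{1j}$--$T_{7j}$ together with the commutator terms $T_{8j},T_{9j}$ recur identically with $\omega$ swapped for $\dot\omega$ in the unknown slot, and are bounded exactly as in \eqref{eq:T1j}--\eqref{eq:T9j} using Lemmas~\ref{lem:BL:1}--\ref{lem:BL:2}, the $\dot h$-version of Lemma~\ref{lem:h:omega:in}, and assumptions \eqref{eq:omi:ass:2}--\eqref{eq:omi:ass:3}; the hydrostatic cancellation
\[
\int_\Omega \dot v^{in}_j\,\dot\omega^{in}_j\, dx\, dy = -\int_\T\Bigl(\int_0^1 \pa_x \dot u^{bl}_j\, dy\Bigr)\dot u^{bl}_j(x,1)\, dx
\]
remains unchanged. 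The genuinely new contribution is $M_j\pa_x^j\bigl((\dot u\pa_x + \dot v\pa_y)\omega\bigr)$, which after Leibniz splits into terms of the shape of $T_{8j}$ and $T_{9j}$ but now also including the $k=0$ endpoint. For the endpoint terms the coefficient $\omega_{j+1}$ or $\pa_y\omega_j$ is placed in $L^\infty_x L^2_y$ and estimated by $\norm{\omega}_{\gamma,\frac{3r}{4},\tau} + \norm{\pa_y\omega}_{\gamma,\frac{r}{2},\tau} \le M$; the $\dot u$ or $\dot v$ factor is controlled by $\norm{\dot\omega^{in}}_{\gamma,r'+\frac12,\tau}$ plus boundary-layer pieces, via Hardy and Poincar\'e as in the treatment of \eqref{eq:T8j:high} and \eqref{eq:T9j:high}. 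The margin $\frac{3r}{4}-r'\ge r_1$ furnishes the summable weight in $j$.

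Collecting everything and choosing $\beta_1$ large enough, I obtain
\[
\norm{\dot\omega^{in}(t)}_{\gamma,r',\tau(t)}^2 + \beta\int_0^t \norm{\dot\omega^{in}}_{\gamma,r'+\frac12,\tau}^2 ds + \int_0^t \norm{\pa_y\dot\omega^{in}}_{\gamma,r',\tau}^2 ds \le \frac{1}{\delta_0^2}\norm{\dot\omega(0)}_{\gamma,r',\tau_0}^2,
\]
and \eqref{eq:omega:Gevrey:1} follows by adding back the boundary-layer contributions through Lemmas~\ref{lem:BL:1} and~\ref{lem:BL:2} applied to $\dot\omega^\flat$, mirroring \eqref{eq:aux:1}--\eqref{eq:aux:2}. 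The principal difficulty is the $\dot h$-version of Lemma~\ref{lem:h:omega:in}: one must verify that the asymmetric placement of $u$ (at Gevrey level $\frac{3r}{4}$) and $\dot u$ (at the working level $r'$) still closes, and it is precisely this requirement that forces the condition $\frac{3r}{4}-r'\ge r_1$ distinguishing the hypotheses of Proposition~\ref{prop:omega:dot} from those of Proposition~\ref{prop:omega:in}.
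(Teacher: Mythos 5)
Your overall architecture is exactly the paper's: define $\dot\omega^\flat$ via \eqref{eq:flat} with datum $\dot h$, prove a $\dot h$-version of Lemma~\ref{lem:h:omega:in}, and rerun the hydrostatic energy estimate of Proposition~\ref{prop:omega:in} on $\dot\omega^{in}_j$. You also correctly identify the first real novelty, namely the asymmetry of $\int_0^1 u\,\dot u\,dy$: the half-sum $\sum_{\ell>j/2}$ can no longer be recovered by symmetry, the $u$-factor must be absorbed into $\sup_t\norm{\omega}_{\gamma,\frac{3r}{4},\tau}\le M$, and this is what forces the margin $\frac{3r}{4}-r'\ge r_1$. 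Up to that point your sketch matches the paper.

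There is, however, a genuine gap in your treatment of the new quadratic terms, specifically the commutator $M'_j[\pa_x^j,\pa_y\omega]\dot v$ in the regime where most $x$-derivatives fall on $\pa_y\omega$ (the contribution $\sum_{k\le j/2}\binom{j}{k}\frac{M'_j}{M'_kM'_{j-k}}\dot v'_k\,\pa_x^{j-k}\pa_y\omega\cdot M'_{j-k}$, in particular the endpoint $\dot v\,\pa_x^j\pa_y\omega$). You propose to place $\pa_y\omega_{j-k}$ in $L^\infty_xL^2_y$ and bound it by $\norm{\pa_y\omega}_{\gamma,\frac r2,\tau}\le M$. This does not close: the factor carries the working weight $M'_{j-k}\sim (j-k+1)^{r'}\tau^{j-k+1}/((j-k)!)^\gamma$, and the $L^\infty_x$ embedding costs a further $x$-derivative, so you would need a uniform-in-time control of $\pa_y\omega$ at Gevrey weight roughly $r'+\gamma+\frac12$, whereas assumption \eqref{eq:omi:ass:1} only provides weight $\frac r2$. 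Under the stated hypotheses ($r'\ge r_1$, $\frac{3r}{4}-r'\ge r_1$) one can perfectly well have $r'>\frac r2$ — indeed Corollary~\ref{cor:dy:omega:Gevrey} applies the proposition with $r'=\frac r2+\gamma-\frac34>\frac r2$ — so the bound you invoke is simply unavailable. (Trying instead the $L^2_t$ control of $\pa_y\omega$ at level $r-\gamma+\frac34$ from \eqref{eq:omega:Gevrey:0} produces three $L^2_t$ factors and cannot be reduced to $\int_0^t\norm{\dot\omega^{in}}^2$.) The paper's fix is an extra maneuver absent from your sketch: for this low-$k$ part one integrates by parts in $y$ (legitimate since $\dot v$ vanishes on $y=0,1$), trading $\pa_x^{j-k}\pa_y\omega$ for $\pa_x^{j-k}\omega$ — which \emph{is} controlled uniformly in time at level $\frac{3r}{4}\ge r'+r_1$ — at the price of terms involving $\pa_y\dot v=-\pa_x\dot u$, $\pa_y^2\omega$ (handled by \eqref{eq:omi:ass:3}), and $\pa_y\dot\omega^{in}_j$ (absorbed by the dissipation). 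Without this step, or an equivalent substitute, the energy estimate for $\dot\omega^{in}$ does not close under the hypotheses as stated.
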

\begin{proof}[Proof of Proposition~\ref{prop:omega:dot}]
The proof  is very similar to that of Proposition~\ref{prop:omega:in}, since one may view equation \eqref{eq:dt:omega} as linearizing about $\omega$ itself of \eqref{eq:HNS:vort} (respectively $u$ for the boundary condition). In order to avoid redundancy, we only emphasize the essential differences.

\mspace
Estimate \eqref{eq:omega:Gevrey:1} follows directly from estimates for ${\dot \omega}^{ in}$ which are analogous to \eqref{eq:prop:omega:in:1}. In order to define ${\dot \omega}^{in}$, we define ${\dot \omega}^{\flat}$ as the solution of system \eqref{eq:flat} with boundary datum given by $\pa_x \dot h = - 2 \pa_x \int_0^1 u \, \dot u \; dy$, which is consistent with \eqref{eq:dt:omega:b}. The function ${\dot \omega}^{\flat}$ obeys all the estimates claimed in Lemma~\ref{lem:BL:1}, except that on the right side we need to replace $h$ with $\dot h$. As in \eqref{eq:bl:def} we define the boundary layer functions corresponding to $\dot \omega$, and according to \eqref{eq:in:def} we define the interior functions corresponding to $\dot \omega$. Note that as before we impose ${\dot \omega}^{bl}(0) =0$, and thus ${\dot \omega}^{in}(0) = {\dot \omega}_0$, where by  \eqref{eq:HNS:vort:evo}:
\begin{equation*}
\dot{\omega}_0 = -u_0 \pa_x \omega_0 - v_0 \pa_y \omega_0 - \pa^2_y \omega_0.  
\end{equation*}

\mspace
At this stage, we can prove an analogous statement to the one provided by Lemma~\ref{lem:h:omega:in}, with $h$ being replaced by 
$$\dot h  =  2\int_0^1 u \, \dot{u} \; dy  - 2 \int_\T \int_0^1  u \, \dot{u} \; dy dx.$$ 
Namely, we can show that for any $r$ as in Proposition \ref{prop:omega:in} and any $r'$ such that 
$$\frac{3r}{4} - \frac{\gamma}{2} - 1 \ge r' > 2\gamma+2, $$
we have
\begin{align}
\int_0^t \abs{\dot h(s)}_{\gamma,r',\tau(s)}^2 ds 
\les M^2\int_{0}^{t} \norm{{\dot \omega}^{in}(s)}_{\gamma,r',\tau(s)}^2 ds. \label{eq:dt:omega:in:j:6}
\end{align}
Indeed, denoting for all $f$
$$
f'_j = (j+1)^{r'-r} f_j = M'_j \pa^j_x f, \quad \mbox{where} \quad M'_j  = \frac{(j+1)^{r'} \tau^{j+1}}{(j!)^\gamma},
$$
similarly to \eqref{eq:h0:estimate} we obtain $\norm{\dot h_0}_{L^2_x} \les \norm{\dot h_1}_{L^2_x }$, while for $j \ge 1$,  as a substitute to \eqref{estim_hj} we obtain the inequality 
\begin{align*} 
\norm{\dot{h}'_j}_{L^2_x} 
&\les \sum_{\ell=1}^{j} {j\choose \ell}  \frac{M'_j}{M'_{j-\ell} M_{\ell}^{'1/2} M_{\ell+1}^{'1/2}}  \norm{\omega'_\ell}_{L^2_{x,y}}^{1/2}   \norm{\omega'_{\ell+1}}_{L^2_{x,y}}^{1/2}   \left( \norm{\dot{\omega}_{j-\ell}^{in'}}_{L^2_{x,y}}  + \norm{y(1-y) \dot{u}_{j-\ell}^{bl'}}_{L^2_{x,y}} \right)\\
&\qquad + M \left(\norm{\dot \omega_j^{in}}_{L^{2}_{x,y}} + \norm{y \dot u_j^\flat}_{L^2_{x,y}} + \norm{y \dot \omega_j^\flat}_{L^2_{x,y}}  +\norm{\dot u_j^{\flat}(x,1/2)}_{L^2_x} \right).
\end{align*}
The half sum $\sum_{\ell=1}^{\lceil j/2 \rceil}$ and the last term at the right-hand side can be treated as before, resulting in 
\begin{align*}
 & \int_0^t  \biggl( \sum_{\ell=1}^{\lceil j/2 \rceil}{j\choose \ell}  \dots  \: + \: M \Bigl(\norm{\dot \omega_j^{in}}_{L^{2}_{x,y}} + \dots +  \norm{\dot u_j^{\flat}(x,1/2)}_{L^2_x} \Bigr) \biggr)^2 \\
 &  \lesssim M^2 \left(\int_0^t \norm{\dot{\omi}(s)}_{\gamma,r',\tau(s)}^2 ds   + \frac{1}{\beta^{5/2}} \int_0^t \abs{\dot{h}(s)}_{\gamma,r'+\gamma- \frac 54,\tau(s)}^2 ds \right)
\end{align*}
if  $\sup_{t \in [0,T]}  \|  \omega(t) \|_{\gamma,\frac{r'}{4},\tau(t)}  \le M$, which is satisfied by assumption \eqref{eq:omi:ass:1} as soon as $r' \le 3r$. 

\mspace
For the half-sum $\sum_{\ell=\lceil j/2 \rceil+1}^{j}$, we can not proceed symmetrically as in the proof of Lemma~\ref{lem:h:omega:in}: as we want an $L^2$ in time control by $\dot{\omega}$, the bound
$$ {j\choose \ell}  \frac{M'_j}{M'_{j-\ell} M_{\ell}^{'1/2} M_{\ell+1}^{'1/2}} \lesssim (l+1)^{\gamma/2} $$
yields by a discrete convolution inequality:]
\begin{align*}
\int_0^t  \Bigl(\sum_{\ell=\lceil j/2 \rceil+1}^{j}  \dots \Bigr)^2  \lesssim  \left( \sup_{[0,t]}  \sum_{\ell \geq 1}  (\ell+1)^{\frac{\gamma}{2}}  \|\omega'_\ell\|_{L^2} \right)^2 \, \int_0^t \left(  \norm{\dot{\omi}(s)}_{\gamma,r',\tau(s)}^2 + \| y \dot{u}^\flat(s) \|_{\gamma,r',\tau(s)}^2 \right) ds  
 \end{align*}
Writing   $\sum_{\ell}  (\ell+1)^{\frac{\gamma}{2}}  \|\omega'_\ell\|_{L^2} =  \sum_{\ell} \frac{1}{\ell+1}  \left( (\ell+1)^{\frac{\gamma}{2}+1}  \|\omega'_\ell\|_{L^2}\right) $ and using Cauchy-Schwartz, we find:
 \begin{align*}
& \int_0^t  \Bigl(\sum_{\ell=\lceil j/2 \rceil+1}^{j}  \dots \Bigr)^2  \\
&  \lesssim \sup_{[0,t]}    \|  \omega(s) \|^2_{\gamma,r'+\frac{\gamma}{2}+1,\tau(s)}  \left(\int_0^t \norm{\dot{\omi}(s)}_{\gamma,r',\tau(s)}^2 ds   + \frac{1}{\beta^{7/2}} \int_0^t \abs{\dot{h}(s)}_{\gamma,r'+\gamma- \frac{7}{4},\tau(s)}^2 ds \right) \\
 & \lesssim  M^2  \left(\int_0^t \norm{\dot{\omi}(s)}_{\gamma,r',\tau(s)}^2 ds   + \frac{1}{\beta^{7/2}} \int_0^t \abs{\dot{h}(s)}_{\gamma,r'+\gamma-\frac{7}{4},\tau(s)}^2 ds \right)
 \end{align*}
where the last inequality comes from \eqref{eq:omi:ass:1}, under the assumption that $r'+\frac{\gamma}{2} +1 \le \frac{3r}{4}$. Gathering the two previous inequalities yields  \eqref{eq:dt:omega:in:j:6} for $\beta$ sufficiently large.

\mspace
Now,  similarly to \eqref{eq:omega:in:j}, we have that 
\begin{subequations}
\label{eq:dt:omega:in}
\begin{align}
&(\partial_t + \beta (j+1) - \pa_y^2) {\dot \omega}^{in'}_j + (u\pa_x +v \pa_y) {\dot \omega}^{in'}_j + {\dot v}^{in'}_j \pa_y \omega 
\notag\\
&\qquad    = - (u\pa_x +v \pa_y) {\dot \omega}^{bl'}_j - {\dot v}_j^{bl'} \pa_y \omega - M'_j \left[ \pa_x^j, u\pa_x + v\pa_y\right] \dot{\omega}
- M'_j \pa_x^j (\dot u \pa_x \omega) - M'_j \left[ \pa_x^j , \pa_y \omega \right] \dot{v}
\label{eq:dt:omega:in:a}
\\
 &\pa_y {\dot \omega}^{in}_j\vert_{y=0,1}   = {\tilde {\dot\omega}}^{in'}_j\vert_{y=1} - {\tilde{\dot\omega}}^{in'}_j\vert_{y=0}   +  2 {\dot \omega}^{\flat'}_j\vert_{y=1} - \pa_y {\dot \omega}^{\flat'}_j\vert_{y=1}.
 \label{eq:dt:omega:in:b}
\end{align}
\end{subequations}
Note that \eqref{eq:dt:omega:in:b} is the same as \eqref{eq:omega:in:j:b}, the left side of \eqref{eq:dt:omega:in:a} is the same as the left side of \eqref{eq:omega:in:j:a}, and the first two terms on the right side of \eqref{eq:dt:omega:in:a} are the same as the first two terms on the right side of \eqref{eq:omega:in:j:a}. The difference comes from the last three terms at the right-side of \eqref{eq:omega:in:j:a}, namely the quadratic terms. The main point is that they now  lack of symmetry: they  involve not only   
$(\dot{\omega}^{in'},\dot{\omega}^{bl'})$ but also $\omega$. In particular, all terms containing $\omega$ must be controlled uniformly in time, to allow for the $L^2_t$ control of $\dot{\omega}^{in'}$ at the left-hand side. This is why we take $r'$ less than $\frac{3r}{4}$ : with such a margin we can still use \eqref{eq:omi:ass:1} to control uniformly in time the  terms where most derivatives fall on $\omega$. 

\mspace
More precisely, proceeding as in the proof of \eqref{eq:dt:omega:in:j:6} to handle  the linear terms (see the estimates of $T_{1j}$, \dots,$T_{7j}$),   we can show that for $\beta$ large enough:
\begin{align}
&\norm{{\dot \omega}^{in}(t)}_{\gamma,r',\tau(t)}^2 +2  \beta \int_0^t   \norm{{\dot \omega}^{in}}_{\gamma,r'+1/2,\tau}^2  ds +  \frac{3}{2} \int_0^t \norm{\pa_y{\dot \omega}^{in}}_{\gamma,r',\tau}^2 ds - \frac{1}{\delta_0^2} \norm{{\dot \omega}_0}_{\gamma,r',\tau_0}^2 \notag\\
&\les     \frac{M^4}{\delta_0^7}   \int_0^t \norm{{\dot \omega}^{in}}_{\gamma,r',\tau}^2 ds +  \frac{M^2}{\delta_0 \beta^{3/4}}  \int_0^t \norm{{\dot \omega}^{in}}_{\gamma,r'+\frac 12,\tau}^2   ds
\notag\\
&\qquad  + \sum_{j \geq 0} \int_0^t \left( S_{1j} + S_{2j} + S_{3j} + S_{4j} \right)(s) ds,
\label{eq:omegadot:in:j:3}
\end{align}
 where 
\begin{align*}
 S_{1j}  & = - \int_{\Omega} M'_j [\pa^j_x , u \pa_x] \dot{\omega}  \frac{{\dot \omega}^{in'}_j}{\pa_y \omega}, \quad  
 S_{2j}   =  - \int_{\Omega}  M'_j [\pa^j_x , v \pa_y] \dot{\omega}  \frac{{\dot \omega}^{in'}_j}{\pa_y \omega} \\
 S_{3j}  & =  - \int_{\Omega}  M'_j \pa^j_x (\dot{u} \pa_x \omega)  \frac{{\dot \omega}^{in'}_j}{\pa_y \omega}, \quad
 S_{4j}   = - \int_{\Omega}   M'_j [\pa^j_x, \pa_y \omega]  \dot{v}  \frac{{\dot \omega}^{in'}_j}{\pa_y \omega}.
 \end{align*}
The first term is  analogue to $T_{8j}$. One can write 
\begin{align*}
 S_{1j} = - \left( \sum_{k=1}^{\lceil j/2\rceil} + \sum_{k=\lceil j/2\rceil+1}^j \right) \binom{j}{k} \frac{M'_j}{M'_k M'_{j-k+1}} \int_\Omega u_k' \dot{\omega}'_{j-k+1} \frac{{\dot \omega}^{in'}_j}{\pa_y \omega}
= S_{1j,{\rm low}} + S_{1j,{\rm high}}. 
\end{align*}
The treatment of $S_{1j,{\rm low}}$ is exactly the same as the one of $T_{8j,{\rm low}}$. Similarly to \eqref{eq:T8j:low}, \eqref{eq:T8j}, we get
$$  \sum \int_0^t S_{1j,{\rm low}}(s) ds \lesssim \frac{M^2}{\delta_0} \int_0^t  \|  \dot{\omega}^{in}(s) \|^2_{\gamma,r'+\frac{1}{2},\tau(s)} ds.   $$
To treat $S_{1j,{\rm high}}$, we use the inequality  $\displaystyle \binom{j}{k}  \frac{M'_j}{M'_k M'_{j-k+1}} \lesssim (j-k+1)^{\gamma-r'}$ for $k \ge \lceil j/2\rceil+1$, so that 
\begin{align*}
 S_{1j,{\rm high}}  &  \lesssim \sum_{k=\lceil j/2\rceil+1}^j  \frac{1}{\delta_0} \|  u'_k \|_{L^\infty}  (j-k+1)^{\gamma-r'} \|  \dot{\omega}^{'}_{j-k+1} \|_{L^2} \|  \dot{\omega}^{in'}_{j} \|_{L^2} \\
  & \lesssim  \sum_{k=\lceil j/2\rceil+1}^j  \frac{k^\gamma}{\delta_0} \|  \omega'_{k+1} \|_{L^2}  (j-k+1)^{\gamma-r'}  \|  \dot{\omega}^{'}_{j-k+1} \|_{L^2} \|  \dot{\omega}^{in'}_{j} \|_{L^2} 
 \end{align*}
so that by the discrete Young's inequality:
\begin{align*}
 \sum \int_0^t S_{1j,{\rm high}}(s) ds  
 &\lesssim  \frac{1}{\delta_0}  \sup_{s \in [0,t]} \sum_{k} k^\gamma \|  \omega'_k(s) \|_{L^2}  \int_0^t   \|  \dot{\omega}(s) \|_{\gamma, \gamma, \tau(s)}   \|  \dot{\omega}^{in} \|_{\gamma, r',\tau(s)}  \\
& \lesssim  \frac{1}{\delta_0}  \sup_{s \in [0,t]}  \|  \omega(s)\|_{\gamma,r'+\gamma+1,\tau(s)}\int_0^t   \|  \dot{\omega}(s) \|_{\gamma, \gamma, \tau(s)}   \|  \dot{\omega}^{in} \|_{\gamma, r',\tau(s)}  \end{align*}
 The sup in time is controlled as usual by assumption \eqref{eq:omi:ass:1}, under the constraint $r'+\gamma+1 \le \frac{3r}{4}$. As regards the second factor, one can split $\|  \dot{\omega}(s) \|_{\gamma, \gamma, \tau(s)} \le \|  \dot{\omega}^{in}(s) \|_{\gamma, \gamma, \tau(s)} +  \|  \dot{\omega}^{bl}(s) \|_{\gamma, \gamma, \tau(s)}$ and control the second term by the analogue of Lemma \ref{lem:BL:1}, followed by \eqref{eq:dt:omega:in:j:6}. For $r' \ge \gamma + (\gamma+\frac{3}{4})$ we find that
  $$  \sum \int_0^t S_{1j,{\rm high}}(s) ds   \lesssim \frac{M^2}{\delta_0} \int_0^t  \|  \dot{\omega}^{in}(s) \|^2_{\gamma,r',\tau(s)} ds. $$
  
 \mspace
 Estimates on $S_{2j}$ (which is analogue to $T_{9j}$) and $S_{3j}$ can be established in the same way. We find for $r'$  and $\frac{3r}{4}-r'$ large enough (with thresholds depending on $\gamma$): 
 \begin{equation*} 
 \sum_j \int_0^t S_{2j} \le \eta \int_0^t  \|  \pa_y \dot{\omega}^{in}(s) \|^2_{\gamma, \gamma+r', \tau(s)} ds  + \frac{C}{\eta} \frac{M^4}{\delta_0^2} \int_0^t   
 \|  \dot{\omega}^{in}(s)\|^2_{\gamma, \gamma+r'+\frac{1}{2}, \tau(s)} ds
 \end{equation*}
  $C > 0$, $\eta$ arbitrarily small, and 
  \begin{equation*} 
 \sum_j \int_0^t S_{3j} \le  \frac{M^2}{\delta_0}  \int_0^t    \|  \dot{\omega}^{in}(s)^2 \|_{\gamma, \gamma+r', \tau(s)} ds
 \end{equation*}
To handle  $S_{4j}$, we proceed slightly differently. We start with the decomposition 
\begin{align*}
 S_{4j} &  = - \left( \sum_{k=0}^{\lceil j/2\rceil} + \sum_{k=\lceil j/2\rceil+1}^{j-1} \right) \binom{j}{k} \frac{M'_j}{M'_k M'_{j-k}} \int_\Omega \pa_y \omega'_{j-k} \dot{v}'_{k}   \frac{{\dot \omega}^{in'}_j}{\pa_y \omega}  \\
& = S_{4j,{\rm low}} + S_{4j,{\rm high}}. 
\end{align*}
 $S_{4j,{\rm high}}$ can be treated similarly to $T_{9j,{\rm high}}$. We obtain, see \eqref{eq:T9j:high}: 
 \begin{align*}
 \sum_j \int_0^t S_{4j,{\rm high}} & \lesssim \frac{1}{\delta_0} \sup_{[0,t]}\|  \pa_y \omega \|_{\gamma, \frac{r'}{2}} \int_0^t \left( \| \dot{\omega}^{in}(s)\|_{\gamma,r'+\frac{1}{2},\tau(s)} + \| \dot{u}^\flat\|_{\gamma,r'+\frac{1}{2},\tau(s)} \right)  \| \dot{\omega}^{in}(s)\|_{\gamma,r'+\frac{1}{2},\tau(s)}  ds \\
 & \lesssim \frac{M^2}{\delta_0} \int_0^t  \| \dot{\omega}^{in}(s)\|_{\gamma,r'+\frac{1}{2},\tau(s)}^2 ds. 
 \end{align*}
 Here, we have used the Gevrey control of $\pa_y \omega$ given by \eqref{eq:omi:ass:1} to bound the first factor, and the analogue of Lemma \ref{lem:BL:1}  followed by  \eqref{eq:dt:omega:in:j:6} to control the boundary layer term in the second factor. As regards $S_{4j,{\rm low}}$, we integrate by parts in $y$. As $\dot{v}$ vanishes at the boundary, no boundary term appears, and we get 
\begin{align*}  S_{4j,{\rm low}} & =    \sum_{k=0}^{\lceil j/2\rceil} \binom{j}{k} \frac{M'_j}{M'_k M'_{j-k}} \int_\Omega \Bigl(  \omega'_{j-k}  \pa_y \dot{v}'_{k}  \frac{{\dot \omega}^{in'}_j}{\pa_y \omega}  -  \omega'_{j-k}  \dot{v}'_{k}  \frac{\pa^2_y\omega}{(\pa_y \omega)^2}{\dot \omega}^{in'}_j  +  \omega'_{j-k}   \dot{v}'_{k}  \frac{\pa_y {\dot \omega}^{in'}_j}{\pa_y \omega} \Bigr)\\ 
& = S_{4j,{\rm low},1} + S_{4j,{\rm low},2} + S_{4j,{\rm low},3}. 
\end{align*}
We can bound $S_{4j,{\rm low},1}$ with the same ideas as before. For $r'$ and $\frac{3r}{4} - r'$ large enough we have
$$ \int_0^t \sum_j S_{4j,{\rm low},1} \lesssim \frac{M^2}{\delta_0} \int_0^t  \| \dot{\omega}^{in}(s)\|_{\gamma,r'+\frac{1}{2},\tau(s)}^2 ds. $$ 
As regards $S_{4j,{\rm low},2}$ we start from the bound 
\begin{align*} S_{4j,{\rm low},2}  & \lesssim  \frac{1}{\delta_0^2} \sum_{k=0}^{\lceil j/2\rceil}   \|  \omega'_{j-k} \|_{L^\infty_x L^2_y}  (k+1)^{-r'} \|   \dot{v}'_{k}  \|_{L^\infty} \|  \pa^2_y \omega \|_{L^\infty_x L^2_y}   \|  \dot{\omega}^{in'}_j \|_{L^2_x L^\infty_y}\\
& \lesssim   \frac{M}{\delta_0^2} \sum_{k=0}^{\lceil j/2\rceil}   \|  \omega'_{j-k} \|_{L^\infty_x L^2_y}  (k+1)^{-r'} \|   \dot{v}'_{k}  \|_{L^\infty}  \|  \dot{\omega}^{in'}_j \|_{L^2_x L^\infty_y}
\end{align*}
where the last inequality comes from \eqref{eq:omi:ass:3} to control $\pa^2_y \omega$. It follows that 
$$ S_{4j,{\rm low},2}  \lesssim    \frac{M}{\delta_0^2} \sum_{k=0}^{\lceil j/2\rceil}  (j-k+1)^{\gamma}  \|  \omega'_{j-k+1} \|_{L^2}  (k+1)^{-r'+2\gamma} \|   \dot{u}'_{k+2}  \|_{L^2}   (\| \dot{\omega}^{in'}_j \|_{L^2}  + \| \pa_y \dot{\omega}^{in'}_j \|_{L^2} ).     $$ 
From there,   for $r'$ and $\frac{3r}{4} - r'$ large enough (with thresholds depending on $\gamma$), 
$$  \int_0^t \sum_j S_{4j,{\rm low},2} \le \eta \int_0^t  \|  \pa_y \dot{\omega}^{in}(s) \|^2_{\gamma, \gamma+r', \tau(s)} ds  + \frac{C}{\eta} \frac{M^6}{\delta_0^4} \int_0^t   \|  \dot{\omega}^{in}(s)\|^2_{\gamma, \gamma+r', \tau(s)} ds.  $$ 
With similar manipulations, we  get the bound 
$$  \int_0^t \sum_j S_{4j,{\rm low},3} \le \eta \int_0^t  \|  \pa_y \dot{\omega}^{in}(s) \|^2_{\gamma, \gamma+r', \tau(s)} ds  + \frac{C}{\eta} \frac{M^4}{\delta_0^2} \int_0^t   \|  \dot{\omega}^{in}(s)\|^2_{\gamma, \gamma+r', \tau(s)} ds.  $$ 
Injecting the previous estimates in \eqref{eq:omegadot:in:j:3},  we get for large enough $\beta$: 
\begin{align*}
&\norm{{\dot \omega}^{in}(t)}_{\gamma,r',\tau(t)}^2 +   \beta \int_0^t   \norm{{\dot \omega}^{in}}_{\gamma,r'+1/2,\tau}^2  ds +  \int_0^t \norm{\pa_y{\dot \omega}^{in}}_{\gamma,r',\tau}^2 ds \le   \frac{1}{\delta_0^2}  \norm{{\dot \omega}_0}_{\gamma,r',\tau_0}^2.
\end{align*}
Estimate \eqref{eq:omega:Gevrey:1} follows from this inequality, in the same way as \eqref{eq:omega:Gevrey:0} is deduced from  \eqref{eq:prop:omega:in:1}.
\end{proof}

\begin{corollary}
\label{cor:dy:omega:Gevrey}
Let $M, \delta_0$ and $\gamma \in [1,9/8]$ be given. There exists  $r_2 = r_2(\gamma) \ge r_1$ such that for $r \ge r_2(\gamma)$,  one can find 
$\beta_2 = \beta_2(M,\delta_0,\tau_0,\tau_1,\gamma,r) \ge \beta_1$ and 
$$T_0 = T_0\left(M,\delta_0,\beta,\tau_0,\tau_1,\gamma,r,\norm{{\dot \omega}_0}_{\gamma,\frac r2+\gamma-\frac 34,\tau_0}\right) > 0 $$
 satisfying: if $\beta \geq \beta_0$, if $T \le T_0$, if \eqref{eq:omi:ass:1}-\eqref{eq:omi:ass:2}-\eqref{eq:omi:ass:3} hold, and if
\begin{align} \label{eq:dy:omega:cor}
\norm{ \pa_y\omega_0}_{\gamma, \frac r2 ,\tau_0} \leq \frac{M}{4},
\end{align}
then
\begin{align}
 \sup_{t\in[0,T]} \norm{\pa_y \omega(t)}_{\gamma,\frac r2,\tau(t)} \leq \frac{M}{2}.
\label{eq:nonsense} 
\end{align}
\end{corollary}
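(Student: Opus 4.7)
The plan is to view $\pa_y \omega(t)$ as the time integral of $\pa_y \dot\omega$ starting from $\pa_y \omega_0$, and to control the latter by means of Proposition~\ref{prop:omega:dot}. Concretely, for each $j\ge 0$ we write
\begin{equation*}
M_j(t) \pa_x^j \pa_y \omega(t) = M_j(t) \pa_x^j \pa_y \omega_0 + \int_0^t M_j(t) \pa_x^j \pa_y \dot\omega(s)\, ds,
\end{equation*}
and since $\tau(t) \le \tau_0$, the first term is dominated in $L^2_{x,y}$ by $M_j(0) \norm{\pa_x^j \pa_y \omega_0}_{L^2}$. By Minkowski in $s$, followed by the monotonicity of the norm in $\tau$ (as $\tau(t) \le \tau(s)$ for $s\le t$) and then Cauchy--Schwarz in time, this gives
\begin{equation*}
\norm{\pa_y \omega(t)}_{\gamma,\frac{r}{2},\tau(t)} \le \norm{\pa_y \omega_0}_{\gamma,\frac{r}{2},\tau_0} + \sqrt{t}\left(\int_0^t \norm{\pa_y \dot\omega(s)}_{\gamma,\frac{r}{2},\tau(s)}^2\, ds\right)^{1/2}.
\end{equation*}

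Next, I would apply Proposition~\ref{prop:omega:dot} with the choice $r' = \frac{r}{2} + \gamma - \frac{3}{4}$, so that $r' - \gamma + \frac{3}{4} = \frac{r}{2}$ matches the target Gevrey weight on the left. The admissibility conditions $r' \ge r_1$ and $\frac{3r}{4} - r' \ge r_1$ translate to $\frac{r}{2} + \gamma - \frac{3}{4} \ge r_1$ and $\frac{r}{4} - \gamma + \frac{3}{4} \ge r_1$, both of which hold for $r \ge r_2(\gamma)$ sufficiently large, and the hypotheses \eqref{eq:omi:ass:1}--\eqref{eq:omi:ass:3} of the proposition are exactly those of the corollary. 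The conclusion \eqref{eq:omega:Gevrey:1} yields
\begin{equation*}
\int_0^t \norm{\pa_y \dot\omega(s)}_{\gamma,\frac{r}{2},\tau(s)}^2\, ds \le \frac{4}{\delta_0^2}\norm{\dot\omega_0}_{\gamma,\frac{r}{2}+\gamma-\frac{3}{4},\tau_0}^2,
\end{equation*}
and note that $\dot\omega_0$ is determined from $\omega_0$ via $\dot\omega_0 = -u_0\pa_x\omega_0 - v_0\pa_y\omega_0 + \pa_y^2\omega_0$, hence its Gevrey norm is a fixed finite quantity under the assumptions on $u_0$.

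Combining these bounds with \eqref{eq:dy:omega:cor} gives
\begin{equation*}
\norm{\pa_y \omega(t)}_{\gamma,\frac{r}{2},\tau(t)} \le \frac{M}{4} + \frac{2\sqrt{t}}{\delta_0}\norm{\dot\omega_0}_{\gamma,\frac{r}{2}+\gamma-\frac{3}{4},\tau_0}
\end{equation*}
for all $t \in [0,T]$. Choosing $T_0$ small enough so that the second term is at most $M/4$, which is permitted since $T_0$ is allowed to depend on $\norm{\dot\omega_0}_{\gamma,\frac{r}{2}+\gamma-\frac{3}{4},\tau_0}$, $\delta_0$ and $M$, yields \eqref{eq:nonsense} for $T \le T_0$. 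The only delicate point is verifying the parameter chain: one must pick $r_2(\gamma)$ so large that both the admissibility thresholds of Proposition~\ref{prop:omega:dot} and the constraint $r \ge r_0(\gamma)$ from Proposition~\ref{prop:omega:in} (propagated into Proposition~\ref{prop:omega:dot}) are satisfied simultaneously; this is a book-keeping issue rather than a genuine difficulty.
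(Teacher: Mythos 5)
Your proposal is correct and follows essentially the same route as the paper: write $\pa_y\omega(t)=\pa_y\omega_0+\int_0^t\pa_y\dot\omega(s)\,ds$, use the monotonicity of $\tau$ and Cauchy--Schwarz in time, and invoke Proposition~\ref{prop:omega:dot} with $r'=\frac r2+\gamma-\frac34$ (the paper takes $r_2=4r_1+4\gamma+3$ to secure both admissibility thresholds) before choosing $T_0$ small in terms of $\norm{\dot\omega_0}_{\gamma,\frac r2+\gamma-\frac34,\tau_0}$. No substantive differences.
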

\begin{proof}[Proof of Corollary~\ref{cor:dy:omega:Gevrey}]
We write $ \pa_y \omega(t) =  \pa_y \omega_0 +  \int_0^t \pa_y \dot{\omega}(s) ds$, so that for all $t \in [0,T]$:
\begin{align*}
 \|  \pa_y \omega(t) \|_{\gamma,\frac r2,\tau(t)} & \le  \|  \pa_y \omega_0 \|_{\gamma,r/2,\tau(t)} + \int_0^t \|  \pa_y \dot{\omega}(s) \|_{\gamma, \frac r2,\tau(t)} ds \\
  & \le   \|  \pa_y \omega_0 \|_{\gamma, \frac r2,\tau(0)} +   \int_0^t \|  \pa_y \dot{\omega}(s) \|_{\gamma, \frac r2,\tau(s)} ds  \\ 
  & \le   \|  \pa_y \omega_0 \|_{\gamma,\frac r2,\tau(0)}  + \sqrt{t} \, \left( \int_0^t\|  \pa_y \dot{\omega}(s) \|^2_{\gamma, \frac r2,\tau(s)} ds \right)^{1/2}. 
\end{align*}
Taking for instance $r_2 = 4r _1+4\gamma+3$, where $r_1$ was introduced in  Proposition~\ref{prop:omega:dot}, and $r \ge  r_2$,  we ensure that $r' := \frac{r}{2}+\gamma-3/4$ satisfies $r' \ge r_1$ and $\frac{3r}{4} - r' \ge r_1$. By Proposition~\ref{prop:omega:dot}, for $\beta \ge \beta_0$ large enough, and $T$ such that $\tau(t) \in [\tau_1,\tau_0]$ for all $t \in [0,T]$, we get
\begin{equation} \label{bound:y:omega}
 \sup_{t \in [0,T]} \|  \pa_y \omega(t) \|_{\gamma,r/2,\tau(t)}  \le \|  \pa_y \omega_0 \|_{\gamma, \frac r2,\tau(0)}  + \frac{2\sqrt{T}}{\delta_0} \| \dot{\omega}(0)\|_{\gamma, \frac r2+\gamma-\frac 34,\tau_0}.  
 \end{equation}
The result follows from the assumption on $\pa_y \omega_0$, once $T_0$ is taken small enough to ensure that $\frac{2\sqrt{T_0}}{\delta_0} \|\dot{\omega(0)}\|_{\gamma,\frac r2+\gamma-\frac 34,\tau_0} \le \frac{M}{4}$ holds.
\end{proof}

\begin{corollary}
\label{cor:dy:dy:omega}
Let $M, \delta_0$ and $\gamma \in [1,9/8]$ be given.   There exists $r_3 = r_3(\gamma)\ge r_2$ such that for $r \ge r_3(\gamma)$,  one can find 
$\beta_3 = \beta_3(M,\delta_0,\tau_0,\tau_1,\gamma,r) \ge \beta_2$, $c_0 = c_0(\tau_0,\tau_1,\gamma,r) > 0 $ and 
\begin{equation} 
\label{eq:T:dependence}
T_0 = T_0\left(M,\delta_0,\beta,\tau_0,\tau_1,\gamma,r,\norm{\omega(0)}_{\gamma,r,\tau_0},\norm{{\dot \omega}(0)}_{\gamma,\frac r2+\gamma-\frac{3}{4},\tau_0}\right)>0
\end{equation}
 satisfying: if $\beta \geq \beta_0$, if $T \le T_0$, if \eqref{eq:omi:ass:1}-\eqref{eq:omi:ass:2}-\eqref{eq:omi:ass:3} hold, and if
 \begin{align}
\frac{1}{\delta_0} \|  \dot{\omega}_0 \|_{\gamma, \frac r2+\gamma-\frac{3}{4},\tau_0} + \frac{1}{\delta_0^2} \|  \omega_0 \|^2_{\gamma,r,\tau_0} + \frac{1}{\delta_0} \|  \omega_0 \|_{\gamma,r, \tau_0} \|  \pa_y \omega_0 \|_{\gamma, \frac r2,\tau_0}    &\leq \frac{c_0 M}{4},
 \label{eq:dy:dy:omega:cor:0}
\end{align} 
then
\begin{align*}
  \sup_{t\in[0,T]}  \norm{\pa_y^2  \omega(t)}_{L^\infty_x L^2_y} &\leq \frac{M}{2}.
\end{align*} 
\end{corollary}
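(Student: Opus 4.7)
The starting point is to read the vorticity equation~\eqref{eq:HNS:vort:evo} as the pointwise identity
\begin{equation*}
\pa_y^2 \omega \;=\; \dot{\omega} + u\,\pa_x\omega + v\,\pa_y\omega,
\end{equation*}
so that at every $t\in[0,T]$ the task reduces to controlling each of the three terms on the right in $L^\infty_x L^2_y$. The basic tool throughout is the one-dimensional Sobolev embedding $\|f\|_{L^\infty_x L^2_y} \lesssim \|f\|_{L^2_{x,y}} + \|\pa_x f\|_{L^2_{x,y}}$, which for our setting is dominated by any Gevrey norm $\|f\|_{\gamma,s,\tau(t)}$ with $s\geq 0$.

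For the $\dot{\omega}$ contribution, Sobolev combined with estimate~\eqref{eq:omega:Gevrey:1} of Proposition~\ref{prop:omega:dot} applied with $r' := r/2 + \gamma - 3/4$ yields
\begin{equation*}
\|\dot{\omega}(t)\|_{L^\infty_x L^2_y} \;\lesssim\; \|\dot{\omega}(t)\|_{\gamma,r/2,\tau(t)} \;\lesssim\; \frac{1}{\delta_0}\|\dot{\omega}_0\|_{\gamma,\,r/2+\gamma-3/4,\,\tau_0}.
\end{equation*}
For the nonlinear pieces, the identities $u(x,y)=\int_0^y \omega(x,z)\,dz$ and $v(x,y)=-\int_0^y\pa_x u(x,z)\,dz$ give the crude pointwise bounds $\|u\|_{L^\infty_{x,y}} \leq \|\omega\|_{L^\infty_x L^2_y}$ and $\|v\|_{L^\infty_{x,y}} \leq \|\pa_x\omega\|_{L^\infty_x L^2_y}$, both of which are $\les \|\omega\|_{\gamma,0,\tau(t)}$, so Sobolev also gives
\begin{equation*}
\|u\,\pa_x\omega\|_{L^\infty_x L^2_y} \les \|\omega\|_{\gamma,0,\tau(t)}^{2},\qquad \|v\,\pa_y\omega\|_{L^\infty_x L^2_y} \les \|\omega\|_{\gamma,0,\tau(t)}\|\pa_y\omega\|_{\gamma,0,\tau(t)}.
\end{equation*}
The remaining Gevrey factors are in turn controlled by the initial data: \eqref{eq:omega:Gevrey:0} of Proposition~\ref{prop:omega:in} gives $\|\omega(t)\|_{\gamma,0,\tau(t)} \les \tfrac{1}{\delta_0}\|\omega_0\|_{\gamma,r,\tau_0}$, and inequality~\eqref{bound:y:omega} inside the proof of Corollary~\ref{cor:dy:omega:Gevrey} gives $\|\pa_y\omega(t)\|_{\gamma,r/2,\tau(t)} \leq \|\pa_y\omega_0\|_{\gamma,r/2,\tau_0} + \tfrac{2\sqrt{T}}{\delta_0}\|\dot{\omega}_0\|_{\gamma,\,r/2+\gamma-3/4,\,\tau_0}$.

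Collecting, we obtain, for some $C=C(\gamma,r,\tau_0,\tau_1)$,
\begin{equation*}
\sup_{t\in[0,T]}\|\pa_y^2\omega(t)\|_{L^\infty_x L^2_y} \,\leq\, C\!\left(\tfrac{\|\dot{\omega}_0\|_{\gamma,\frac{r}{2}+\gamma-\frac{3}{4},\tau_0}}{\delta_0} + \tfrac{\|\omega_0\|_{\gamma,r,\tau_0}^{2}}{\delta_0^{2}} + \tfrac{\|\omega_0\|_{\gamma,r,\tau_0}\|\pa_y\omega_0\|_{\gamma,\frac{r}{2},\tau_0}}{\delta_0} + \tfrac{\sqrt{T}\,\|\omega_0\|_{\gamma,r,\tau_0}\|\dot{\omega}_0\|_{\gamma,\frac{r}{2}+\gamma-\frac{3}{4},\tau_0}}{\delta_0^{2}}\right).
\end{equation*}
Choosing $c_0 := 1/(4C)$, hypothesis~\eqref{eq:dy:dy:omega:cor:0} forces the first three terms to sum to at most $M/4$, and the residual $\sqrt{T}$-term is bounded by $M/4$ once $T_0$ is shrunk in terms of $\|\omega_0\|_{\gamma,r,\tau_0}$ and $\|\dot{\omega}_0\|_{\gamma,\frac{r}{2}+\gamma-\frac{3}{4},\tau_0}$, which is precisely the dependence recorded in~\eqref{eq:T:dependence}. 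The only genuine obstacle in executing this is the bookkeeping of Gevrey indices: one must pick $r_3(\gamma)$ to exceed $r_0$, $r_1$, $r_2$ and to accommodate the index shifts $r\mapsto r-\gamma+3/4$, $r\mapsto r/2$, and $r\mapsto r/2+\gamma-3/4$ imposed by the cited results. No new Gevrey cancellation beyond what is already provided by Propositions~\ref{prop:omega:in}--\ref{prop:omega:dot} and Corollary~\ref{cor:dy:omega:Gevrey} is required.
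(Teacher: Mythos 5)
Your proposal is correct and follows essentially the same route as the paper: rewrite the vorticity equation as $\pa_y^2\omega = \dot\omega + u\pa_x\omega + v\pa_y\omega$, control each term in $L^\infty_x L^2_y$ by Gevrey norms via Sobolev embedding in $x$, then invoke Proposition~\ref{prop:omega:in} (with index $r$), Proposition~\ref{prop:omega:dot} (with $r'=\frac r2+\gamma-\frac34$), and the bound \eqref{bound:y:omega} from Corollary~\ref{cor:dy:omega:Gevrey}, finally choosing $c_0$ and shrinking $T_0$ to absorb the $\sqrt{T}$ term. The only cosmetic difference is that you bound $\norm{u}_{L^\infty}$ and $\norm{v}_{L^\infty}$ via the explicit integral formulas rather than directly by Gevrey norms, which amounts to the same estimates.
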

\begin{proof}[Proof of Corollary~\ref{cor:dy:dy:omega}]
We write the vorticity equation under the form
$$ 
\pa^2_y \omega = \dot{\omega} + u \pa_x \omega  + v \pa_y \omega. 
$$
Hence, for all $t \in [0,T]$: 
$$ 
\|  \pa^2_y \omega(t) \|_{L^\infty_x L^2_y} 
\le \|  \dot{\omega}(t) \|_{L^\infty_x L^2_y}  
+  \|  u(t) \|_{L^\infty_{x,y}} \|  \pa_x \omega(t) \|_{L^\infty_x L^2_y} 
+  \|  v(t) \|_{L^\infty_{x,y}} \|  \pa_y \omega(t) \|_{L^\infty_x L^2_y}. $$
For $r$ large enough, we obtain
$$ \|  \pa^2_y \omega(t) \|_{L^\infty_x L^2_y}  \lesssim \|  \dot{\omega}(t) \|_{\gamma,\frac r2,\tau(t)} +  \|  \omega(t) \|^2_{\gamma,r-\gamma+\frac{3}{4},\tau(t)} + \|  \omega(t) \|_{\gamma,r-\gamma+\frac{3}{4},\tau(t)} \|  \pa_y \omega(t) \|_{\gamma, \frac r2,\tau(t)}. $$ 
By Propositions \ref{prop:omega:in} and Proposition \ref{prop:omega:dot} applied respectively with $r$ and $r' = \frac{r}{2} + \gamma - \frac{3}{4}$,  and by  inequality \eqref{bound:y:omega}, we find
\begin{align*} 
\sup_{t \in [0,T]} \|  \pa_y^2 \omega(t) \|_{L^\infty_x L^2_y}  
& \lesssim \frac{1}{\delta_0} \|  \dot{\omega}_0 \|_{\gamma,\frac r2+\gamma-\frac{3}{4},\tau_0} + \frac{1}{\delta_0^2} \|  \omega_0 \|^2_{\gamma,r,\tau_0} \\  
&\qquad  + \frac{1}{\delta_0} \|  \omega_0 \|_{\gamma,r, \tau_0} \left( \|  \pa_y \omega_0 \|_{\gamma,\frac r2,\tau_0}  + \frac{\sqrt{T}}{\delta_0} \|  \dot{\omega}_0\|_{\gamma,\frac r2+\gamma-\frac 34,\tau_0}\right).
\end{align*}
Upon taking $T$ sufficiently small, this concludes the proof of the Corollary. 
\end{proof}

\section{Minimum and maximum principle for $\pa_y \omega$} \label{sec:max}
The quantity $\pa_y \omega$ obeys a (degenerate) parabolic equation with Dirichlet boundary conditions
\begin{subequations}
\begin{align}
 \pa_t (\pa_y \omega) - \pa_y^2(\pa_y\omega) + (u \pa_x + v \pa_y) (\pa_y \omega) + (\pa_x u) (\pa_y \omega) = \omega \pa_x \omega & \\
 \pa_y \omega|_{y=0,1} = (\tilde \omega\vert_{y=1} - \tilde \omega\vert_{y=0}) -  \pa_x \int_0^1 u^2 dy . &
\end{align}
\end{subequations}
Our goal is to combine this fact with $L^2_t L^\infty_{x,y}$ estimates on $\omega \, \pa_x \omega$ and the Dirichlet datum, to deduce that the convexity of $u$ is conserved for small time.

\begin{proposition}
\label{prop:maximum:principle}
Let $M, \delta_0>0$ and $\gamma \in [1,9/8]$ be given. There exists  $r_4 = r_4(\gamma) \ge r_3$ such that for $r \ge r_4(\gamma)$,  one can find 
$\beta_4 = \beta_4(M,\delta_0,\tau_0,\tau_1,\gamma,r) \ge \beta_3$ and $T_0$ as in \eqref{eq:T:dependence}
 satisfying: if $\beta \geq \beta_0$, if $T \le T_0$, if \eqref{eq:omi:ass:1}-\eqref{eq:omi:ass:2}-\eqref{eq:omi:ass:3} hold, and if
\begin{align}
 4 \delta_0 \leq \pa_y  \omega_0  &\leq \frac{1}{4\delta_0},
 \label{eq:convex:IC}
\end{align} 
then
\begin{align}
 2 \delta_0 \leq \pa_y  \omega(t)  &\leq \frac{1}{2\delta_0}, \quad \forall t\in [0,T].
 \label{eq:convex:T}
\end{align}  
\end{proposition}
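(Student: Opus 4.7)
Set $f := \pa_y \omega$. By the equation displayed just above the Proposition, $f$ obeys
\begin{align*}
\pa_t f - \pa_y^2 f + (u\pa_x + v\pa_y) f + (\pa_x u) f = \omega \pa_x \omega
\end{align*}
with Dirichlet boundary values $f|_{y=0,1} = g(x,t) := \tilde\omega|_{y=1} - \tilde\omega|_{y=0} - \pa_x\int_0^1 u^2\,dy$, and the compatibility conditions of Theorem~\ref{thm:main} give $g(x,0) = \pa_y\omega_0|_{y=0,1} \in [4\delta_0, \tfrac{1}{4\delta_0}]$ by \eqref{eq:convex:IC}. The plan is a parabolic comparison argument against the spatially constant, affine-in-time barriers
\begin{align*}
\bar f_+(t) := \tfrac{1}{4\delta_0} + K t, \qquad \bar f_-(t) := 4\delta_0 - K t,
\end{align*}
for a constant $K = K(M,\delta_0,r,\gamma,\tau_0,\tau_1,\|\omega_0\|,\|\dot\omega_0\|)$ to be fixed; shrinking $T_0$ so that $KT_0 \le \tfrac{1}{4\delta_0}$ will then yield \eqref{eq:convex:T}.

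The first step is to convert the Gevrey controls into $L^\infty$ bounds. A Sobolev embedding on $\Omega$ and the summability of $M_j^{-2}$ for $r$ large enough give $\|\phi\|_{L^\infty(\Omega)} \les \|\phi\|_{\gamma,r_0'',\tau} + \|\pa_y \phi\|_{\gamma,r_0'',\tau}$ for some threshold $r_0''=r_0''(\gamma)$. Combined with \eqref{eq:omi:ass:1}--\eqref{eq:omi:ass:3} and Proposition~\ref{prop:omega:dot} applied at an appropriate index $r'$, this produces constants $C_1, C_2$ depending only on $M,\delta_0,r,\gamma,\tau_0,\tau_1$, $\|\omega_0\|_{\gamma,r,\tau_0}$ and $\|\dot\omega_0\|_{\gamma,r/2+\gamma-3/4,\tau_0}$ such that, uniformly on $[0,T]$,
\begin{align*}
\|\omega\pa_x\omega\|_{L^\infty_{x,y}} + \|\pa_x u\|_{L^\infty_{x,y}} \le C_1, \qquad \|\pa_t g(\cdot,t)\|_{L^\infty_x} \le C_2.
\end{align*}
For the bound on $\pa_t g$, one differentiates in $t$ and uses a trace inequality in $y$ to control $\pa_t\omega|_{y=0,1}$ in terms of $\dot\omega$ and $\pa_y \dot\omega$, together with the momentum equation \eqref{eq:HNS:evo} to express $\pa_t u$.

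Set $\Phi := f - \bar f_+$. Using that $\bar f_+$ is independent of $x,y$,
\begin{align*}
\pa_t\Phi - \pa_y^2\Phi + (u\pa_x + v\pa_y)\Phi + (\pa_x u)\Phi = \omega\pa_x\omega - K - (\pa_x u)\bar f_+ =: R.
\end{align*}
Since $\pa_x u$ has indefinite sign, substitute $\tilde\Phi := e^{-Lt}\Phi$ with $L := 2C_1$, so that the new zeroth-order coefficient $L + \pa_x u \ge C_1 > 0$. The inequality $\tilde\Phi \le 0$ holds at $t=0$ by \eqref{eq:convex:IC}; on $y\in\{0,1\}$, one has $\Phi(x,t) \le C_2 t - Kt \le 0$ as soon as $K \ge C_2$. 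If $\tilde\Phi$ admitted a positive maximum at some interior point $(x_0,y_0,t_0) \in \T\times(0,1)\times(0,T_0]$, the classical inequalities $\pa_t\tilde\Phi \ge 0$, $-\pa_y^2\tilde\Phi \ge 0$, $\nabla\tilde\Phi = 0$, and $(L+\pa_x u)\tilde\Phi > 0$ at that point would force $R(x_0,y_0,t_0) \ge 0$. However, for $T_0$ so small that $\bar f_+(T_0) \le \frac{1}{2\delta_0}$,
\begin{align*}
R \le C_1 + \tfrac{C_1}{2\delta_0} - K < 0
\end{align*}
as soon as $K > C_1\bigl(1 + \tfrac{1}{2\delta_0}\bigr)$, a contradiction. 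Hence $f \le \bar f_+ \le \tfrac{1}{2\delta_0}$ on $[0,T_0]$, and the symmetric argument applied to $\bar f_- - f$ gives $f \ge \bar f_- \ge 2\delta_0$.

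The main obstacle is the $L^\infty$ bound $\|\pa_t g\|_{L^\infty_{t,x}} \le C_2$ with $C_2$ independent of $T$: this requires Proposition~\ref{prop:omega:dot} applied at a Gevrey index high enough to absorb the extra tangential derivative in the definition of $g$, plus trace control at $y \in \{0,1\}$. Once $K := \max\{C_2, \,2C_1(1+\tfrac{1}{2\delta_0})\}$ is fixed, one shrinks $T_0$ further (compatibly with its dependence \eqref{eq:T:dependence}) so that $KT_0 \le \tfrac{1}{4\delta_0}$, completing the argument.
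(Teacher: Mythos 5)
Your overall scheme (exponential tilt to make the zeroth-order coefficient positive, comparison with explicit barriers, minimum/maximum principle using only the $\pa_y^2$ diffusion) is the same in spirit as the paper's argument, and your uniform-in-time bounds on $\norm{\omega\pa_x\omega}_{L^\infty_{x,y}}$ and $\norm{\pa_x u}_{L^\infty_{x,y}}$ are indeed obtainable from \eqref{eq:omi:ass:1} (with constants depending on $M$, which is allowed in \eqref{eq:T:dependence}). The genuine gap is your claim that $\norm{\pa_t g(\cdot,t)}_{L^\infty_x}\le C_2$ \emph{uniformly in time}. The term $\pa_x\int_0^1 u\,\dot u\,dy$ is fine (it needs no $y$-derivative of $\dot\omega$), but the boundary traces $\dot{\tilde\omega}\vert_{y=0,1}$ are not: an $L^\infty_x$ bound on $\dot\omega\vert_{y=0,1}$ at a \emph{fixed} time requires a trace in $y$, hence $\pa_y\dot\omega(t)$ in $L^2_y$ at that time, and Proposition~\ref{prop:omega:dot} (estimate \eqref{eq:omega:Gevrey:1}) only controls $\int_0^t\norm{\pa_y\dot\omega(s)}^2\,ds$, i.e. $L^2$ in time. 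Rewriting $\dot\omega\vert_{y=0,1}=\pa_y^2\omega\vert_{y=0,1}$ via the equation does not help either, since \eqref{eq:omi:ass:3} gives only $L^\infty_x L^2_y$ control of $\pa_y^2\omega$, with no trace. This is exactly why the paper works with $\norm{\pa_t a}_{L^2(0,T;L^\infty_x)}$ and pays a factor $\sqrt{T}$ by Cauchy--Schwarz, rather than with a pointwise-in-time bound.

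The gap is reparable without changing your architecture: replace the claimed pointwise bound by the available $\norm{\pa_t g}_{L^2(0,T;L^\infty_x)}\les \norm{\dot\omega_0}_{\gamma,\frac r2+\gamma-\frac34,\tau_0}$, which yields $\abs{g(x,t)-g(x,0)}\le C_2\sqrt{t}$, and modify the barriers to $\bar f_\pm(t)=\frac{1}{4\delta_0}\pm\bigl(C_2\sqrt{t}+Kt\bigr)$ (respectively $4\delta_0\mp(\cdots)$). The boundary comparison then holds, the interior supersolution inequality is only improved (the extra time derivative $C_2/(2\sqrt{t})$ has the favorable sign), and shrinking $T_0$ so that $C_2\sqrt{T_0}+KT_0\le\frac{1}{4\delta_0}$ closes the argument. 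This is essentially the paper's route, which absorbs the same $\sqrt{t}$-type contributions into the auxiliary function $e(t)$ and the shifted unknown $\bar\psi$ before invoking the minimum principle. As written, however, your proof as stated relies on an estimate that the cited propositions do not provide.
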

\begin{proof}[Proof of Proposition~\ref{prop:maximum:principle}]
We wish to apply a version of the parabolic minimum/maximum principle for the following degenerate parabolic problem posed in $\Omega \times (0,T)$, with $\Omega$ being the periodic in $x$ strip $(x,y) \in \T \times (0,1)$:
\begin{subequations}
\begin{align}
(\partial_t - \pa_y^2 + b(x,y,t) \cdot \nabla_{x,y} + c(x,y,t)) \psi &= d(x,y,t)  &&\mbox{in} \quad \Omega \times (0,T), 
\label{eq:parabolic:a}\\
\psi  &= a(x,t)  &&\mbox{on} \quad \partial\Omega \times [0,T),
\label{eq:parabolic:b}
\\
\psi|_{t=0} &= \psi_0(x,y) &&\mbox{in} \quad \Omega.
\label{eq:parabolic:c}
\end{align}
\end{subequations}
Here $\psi = \pa_y \omega$, $b = (u,v)$ is incompressible and vanishes on the boundary $\T \times \{0,1\}$, $c = \pa_x u$   vanishes at the boundary $\T \times \{0,1\}$,  $d = \omega \pa_x \omega$, and the boundary data is  $a =  (\tilde \omega\vert_{y=1} - \tilde \omega\vert_{y=0})  -  \pa_x \int_0^1 u^2 dy $. As emphasized after Theorem \ref{thm:main}, the third compatibility condition of the theorem corresponds to the relation $a(x,0) = \psi_0(x,0)$.

\mspace
By \eqref{eq:convex:IC}, the initial datum $\psi_0$ is taken to obey $0< 4\delta_0 \leq \psi_0(x,y) \leq 1/(4\delta_0)$, for some $\delta_0 \in (0,1/4)$, uniformly on $\Omega$. Thus, by the compatibility of the initial datum and of the boundary condition, we have that $0< 4\delta_0 \leq a(x,0) \leq 1/(4\delta_0)$, uniformly on $\T$. Thanks to the Gagliardo-Nirenberg inequality 
$$ \|  f \|_{L^\infty_y} \le C \|  f\|_{L^2_y}^{1/2} \left(  \|  f\|_{L^2_y}^{1/2} + \| \pa_y f\|_{L^2_y}^{1/2} \right)$$ 
and the estimate  \eqref{eq:omega:Gevrey:1}, we  have that 
\begin{align*}
\norm{\partial_t a(x,t)}_{L^2(0,T;L^\infty_x)} 
& \le 4 \norm{\dot{\omega}}_{L^2(0,T; L^\infty)} +  2\norm{\pa_x \int_0^1 u \, \dot u \, dy}_{L^2(0,T;L^\infty_x)} \notag \\
& \lesssim   \frac{1}{\delta_0^2}  \left( \frac{1}{\beta^{1/4}} + \frac{M}{\beta^{1/2}} \right)  \norm{{\dot \omega_0}}_{\gamma, \frac r2+\gamma-\frac{3}{4},\tau_0} 
\leq  \norm{{\dot \omega_0}}_{\gamma, \frac r2+\gamma-\frac{3}{4},\tau_0} 
\end{align*}
 for $\beta$ sufficiently large. By the fundamental theorem of calculus in time, and the Cauchy-Schwartz inequality we thus obtain that 
\begin{align*}
3\delta_0 \leq 4 \delta_0 - \sqrt{T}\norm{{\dot \omega_0}}_{\gamma, \frac r2+\gamma-\frac{3}{4},\tau_0}  \leq a(x,t) \leq \frac{1}{4\delta_0} +  \sqrt{T}\norm{{\dot \omega_0}}_{\gamma, \frac r2+\gamma-\frac{3}{4},\tau_0} \leq \frac{1}{3\delta_0}
\end{align*}
uniformly on $\T \times (0,T)$, upon taking $T$ sufficiently small. Thus, on the parabolic boundary $\Omega \times \{0\} \cup \partial\Omega \times (0,T)$, we have that $\psi \geq 3\delta_0$.

By the same Gagliardo-Nirenberg inequality, the Poincar\'e inequality in $y$, and estimate \eqref{eq:omega:Gevrey:0}, we have 
\begin{align*}
\sup_{t\in [0,T]} \norm{c(t)}_{L^\infty_x L^\infty_y} =  \sup_{t\in [0,T]} \norm{\pa_x u(t)}_{L^\infty_x L^\infty_y} \leq \frac{C_1}{\delta_0} \norm{\omega_0}_{\gamma,r,\tau_0}
\end{align*}
where $C_1  = C_1(\tau_0,\tau_1,\gamma,r)$. Denoting
\begin{align}
 C_* = 1+ \frac{C_1}{\delta_0} \norm{\omega_0}_{\gamma,r,\tau_0},
 \label{eq:C*}
\end{align}
the above estimate implies that 
\begin{align*}
c(x,y,t) + C_*  \geq 1.
\end{align*}
Lastly, we note that by the Gagliardo-Nirenberg inequality and \eqref{eq:omega:Gevrey:0} we have
\begin{align*}
\int_0^t \norm{d(s)}_{L^\infty_x L^\infty_y} ds
&= \int_0^t \norm{\omega(s)}_{L^\infty_x L^\infty_y} \norm{\pa_x \omega(s)}_{L^\infty_x L^\infty_y} ds 
\les  \frac{\sqrt{t}}{\delta_0^2} \norm{\omega_0}_{\gamma,r,\tau_0}^2
\end{align*}
so that for $T \le 1$ we have
\begin{align}
e(t) &:=t+ \int_0^t e^{-C_* s} \norm{d(s) - 3\delta_0 c(s)}_{L^\infty_x L^\infty_y} ds 
\notag\\
&\les t+ \sqrt{t} \norm{\omega_0}_{\gamma,r,\tau_0}^2 + t C_1  \norm{\omega_0}_{\gamma,r,\tau_0}
\notag\\
&\leq C_2 \sqrt{t} \left(1 + \norm{\omega_0}_{\gamma,r,\tau_0}^2 +    \norm{\omega_0}_{\gamma,r,\tau_0}\right)
= \sqrt{t} D_* 
\label{eq:D*}
\end{align}
holds for all $t \in [0,T]$, where $C_2$ is a constant that only depends on $\gamma,r,\tau_0$, and $\tau_1$, and we have denoted
\begin{align*}
 D_* = C_2  \left(1 + \norm{\omega_0}_{\gamma,r,\tau_0}^2 +   \norm{\omega_0}_{\gamma,r,\tau_0}\right).
\end{align*}
With this notation, we make the following change of unknowns 
\begin{subequations}
\begin{align}
\bar \psi &= e^{-  C_* t}(\psi(x,y,t) - 3\delta_0) + e(t) \\
\bar a &= e^{-  C_* t} (a(x,t) - 3\delta_0) + e(t)   \\
\bar d &= e^{-  C_* t}(d(x,y,t) - 3\delta_0 c(x,y,t)) \\
\bar c &= c(x,y,t) +  C_*  \\
\bar \psi_0 &= \psi_0(x,y) - 3 \delta_0 
\end{align}
\end{subequations}
The quantity $e(t)$ was chosen so that  $\dot e(t) = 1 + \norm{\bar d(t)}_{L^\infty} $. One may then verify directly that 
\begin{subequations}
\label{eq:bar:psi}
\begin{align}
(\partial_t - \pa_y^2 + b\cdot \nabla_{x,y} + \bar c) \bar \psi 
&= \left( \bar d + \norm{\bar d}_{L^\infty} \right) + 1 + \bar c e  \geq 1 > 0 \\
\bar \psi|_{y \in \{0,1\}} &= \bar a  \geq t \geq 0\\
\bar \psi|_{t= 0 } &= \bar \psi_0  \geq \delta_0 > 0.
\end{align}
\end{subequations}
The parabolic minimum principle then guarantees that 
\begin{align}
 \bar \psi(x,y,t) \geq 0 \quad \mbox{on} \quad \Omega \times [0,T]
 \label{eq:minimum}
\end{align}
Indeed, if  a strictly negative minimum would be attained by $\bar \psi$, then this point minimum could not lie on the parabolic boundary (since $\bar a \geq 0$ and $\bar\psi_0 > 0$). If this point would lie in the interior, at this point we would need to have $\nabla_{t,x,y} \bar\psi = 0$, whereas $(-\pa_y^2 + \bar c)\bar \psi < 0$ since $\bar c >0$. This contradicts  $\left( \bar d + \norm{\bar d}_{L^\infty} \right) + 1 + \bar c e > 0$, which thus proves \eqref{eq:minimum}.

Working backwards from the definition of $\bar \psi$, we see that \eqref{eq:C*}, \eqref{eq:D*}, and \eqref{eq:minimum} imply
\begin{align*}
 \psi(x,y,t) \geq 3\delta_0 - e^{C_*t} e(t) \geq 3 \delta_0 -  \sqrt{T} e^{C_* T} D_* \geq 2 \delta_0
\end{align*}
as long as  $T$ is chosen sufficiently small in terms of $C_*,D_*$, and $\delta_0$, consistent with the dependence given in \eqref{eq:T:dependence}. This proves the lower bound in \eqref{eq:convex:T}.

The proof of the upper bound in \eqref{eq:convex:T} follows from very similar arguments, reducing the problem to a maximum principle for a parabolic equation. To avoid redundancy, we omit these details.
\end{proof}

\section{Proof of Theorem~\ref{thm:main}} \label{sec:proof:main}
The proof of the main theorem proceeds as follows. Let $\gamma \leq 9/8$ and $r \ge r_4(\gamma)$.  For any  $ \tau_0 < \tau^0$ assumption \eqref{eq:thm:main:1} implies that  $\omega_0 = \pa_y u_0$ satisfies 
\begin{align*}
\|  \omega_0 \|_{\gamma,r,\tau_0}  +  \|  \pa^2_y \omega_0 \|_{\gamma,r,\tau_0} < +\infty.
\end{align*} 
We fix $\tau_0 \in  (\tau_1, \tau^0)$. We then fix   $\delta_0$ small enough and $M$ large enough,  so that the initial constraints  \eqref{eq:cond:omega}, \eqref{eq:dy:omega:cor}, \eqref{eq:dy:dy:omega:cor:0} and \eqref{eq:convex:IC} hold. Let $\beta \ge \beta_4$ and $\eps > 0$. We consider the approximate system 
\begin{subequations}
\label{eq:HNS:approx}
\begin{align}
\pa_t u + u \pa_x u + v \pa_y u  +  \pa_x p -  \pa^2_y u - \eps \pa^2_x u & = 0, \quad (x,y) \in \T \times (0,1), 
\label{eq:HNS:approx:evo}\\ 
\pa_y p & = 0, \quad (x,y) \in \T \times (0,1), \\
\pa_x u + \pa_y v & = 0, \quad (x,y) \in \T \times (0,1), \label{eq:HNS:approx:incompressible}\\
u\vert_{y=0,1} = v\vert_{y=0,1} & = 0,\label{eq:HNS:approx:BC}
\end{align}
\end{subequations}
with the same initial condition $u\vert_{t=0} = u_0$. System \eqref{eq:HNS:approx} is called the two-dimensional primitive equations, and has been widely studied,  in various geometries and under various boundary conditions~\cite{BreschKazhikhovLemoine04,BreschGuillenMadmoudiRodriguez03,TemamZiane04}. In particular, Gevrey or analytic regularity results were obtained in both periodic and bounded geometries~\cite{Petcu04,PetcuTemamWirosoetisno04,KukavicaLombardoSammartino16}. In the context of system \eqref{eq:HNS:approx}, the well-posedness result stated in Theorem~\ref{thm:main} can be proved without much difficulty. In fact, the presence of $-\eps \pa^2_x u$ allows for a classical treatment, and the existence of solutions at fixed $\eps>0$ follows e.g. from a Galerkin approximation procedure (which is compatible with the hydrostatic trick~\cite{MasmoudiWong12}).  Moreover, the compatibility conditions are the same for \eqref{eq:HNS} and \eqref{eq:HNS:approx}. We find in this way a unique  local solution $u^\eps$ with the regularity requirements stated in Theorem \ref{thm:main}.  We can then consider  $T_{\eps,*}$ the maximal time  on which  $\| \omega_\eps \|_{\gamma,0,\tau_1} < +\infty$. In particular, if $T_{\eps,*}$ is small enough so that $\tau(T_{\eps,*}) \ge  \tau_1$, one has  
\begin{equation}  \label{eq:blowup}
\sup_{t \in [0,T_{\eps,*})} \|  \omega_\eps(t) \|_{\gamma,\frac{3r}{4},\tau(t)} = +\infty .
\end{equation} 
By the initial constraint \eqref{eq:cond:omega},  the fact that  $\tau_0 < \tau^0$, and the continuity of the solution, there exists  a maximal  time  $0 < T_\eps   \le T_{\eps,*}$ on which the conditions  \eqref{eq:omi:ass:1}-\eqref{eq:omi:ass:2}-\eqref{eq:omi:ass:3} are satisfied with $u$ replaced by $u_\eps$ and $T$ replaced by $T_\eps$. Note that all the estimates that we established for a solution $u$ of \eqref{eq:HNS} adapt straightforwardly to a solution $u^\eps$ of \eqref{eq:HNS:approx}. The only notable change is the inclusion of the $-\eps \pa_x^2$ term in \eqref{eq:flat} for defining the boundary layer lift $\omega^{\flat,\eps}$. However, since all estimates for $\omega^{\flat,\eps}$ are obtained by performing a Fourier transform in $x$ and using Plancherel to obtain the desired $L^2_x$ bound, this modification is routine (see also~\cite{IgnatovaVicol16} for $\eps$-independent bounds for analytic in $x$ - Sobolev in $y$ solutions of the $\eps$-regularization of the Prandtl system).  Applying   Corollaries~\ref{cor:omega:Gevrey},~\ref{cor:dy:omega:Gevrey},~\ref{cor:dy:dy:omega}, and Prosition~\ref{prop:maximum:principle} at positive $\eps$, we see that there exists $T>0$ independent of $\eps$, such that for all $t \in [0,\min(T_\eps,T)]$, the conditions  \eqref{eq:omi:ass:1}-\eqref{eq:omi:ass:2}-\eqref{eq:omi:ass:3} still hold with $M$ replaced by $\frac{M}{2}$, and $\delta_0$ replaced by $2\delta_0$. If $T_\eps < T$, then one has necessarily $T_\eps = T_{\eps,*}$, otherwise by continuity the inequalities \eqref{eq:omi:ass:1}-\eqref{eq:omi:ass:2}-\eqref{eq:omi:ass:3} would be satisfied beyond $T_\eps$. But then there  is a contradiction between \eqref{eq:blowup} and the first half of  \eqref{eq:omi:ass:1}. Hence, $T_\eps \ge T$, and so $T_{\eps,*} \ge T$. 

\mspace
We have just shown that the approximations $u_\eps$ are all defined on a time interval independent of $\eps$, and satisfy uniform Gevrey bounds on it. This allows to let $\eps$ go to zero, and conclude by standard compactness arguments to the existence of a solution.

\mspace
For the uniqueness of solutions, the equation obeyed by the difference is basically a linearized version of the equation, very similar to the equation obeyed by $\dot \omega$. Then an estimate similar to the one from Proposition~\ref{prop:omega:dot}, gives the good estimate for the difference of two solutions, implying uniqueness.

\appendix
\section{Proof of Lemma \ref{lemma_Fourier}} \label{appendA}
To prove the first item, we adapt arguments of \cite[pages 1805-1807]{Kuramoto}. We fix $x \in \T$, $y > 0$, and drop them from notations. We write 
$$\hat{\bar{\omega}}_j^\flat(\eta) =  \hat{f}_j(\zeta) \, g_j(\zeta), \quad g_j(\zeta) = \frac{1}{2 - \sqrt{\beta (j+1) + i \zeta}} e^{- y \sqrt{\beta (j+1)+i\zeta}}. $$
Clearly, as $f_j = 0$ for $t < 0$ and belongs to $L^1(\R)$,
$$  \hat{f}_j(\zeta) =  \int_{\R_+} f_j(t) e^{-i \zeta t} dt $$ 
is holomorphic for $\Im \zeta < 0$, and continuous for $\Im \zeta \le 0$. Moreover, 
\begin{equation} \label{uniform_fj}
\lim_{\Im \zeta  \rightarrow +\infty}  \hat{f}_j(\zeta) = 0 \: \text{ uniformly for $\Re \zeta \in \R$}, \quad \lim_{\Re \zeta  \rightarrow \pm\infty}   \hat{f}_j(\zeta) = 0  \: \text{ uniformly for  $\Im \zeta \le 0$}. 
\end{equation}
The first limit follows directly from the inequality 
$$ | \hat{f}_j(\zeta) | \le   \int_{\R_+}  |f_j(t)| e^{-\Im \zeta t} dt $$ 
and the dominated convergence theorem. The second limit follows from a close look at Riemann-Lebesgue's lemma: given $\eps > 0$, and some  $f^\eps_j \in C^1_c(\R_+)$ with $\int_{\R_+} |f_j - f_j^\eps| \le \eps$, we get
\begin{align*}
 |\hat{f}_j(\zeta)|  & \le \int_{\R+} | f_j - f_j^\eps|  \: + \:   | \int_{\R+} f_j^\eps(t)   e^{-i \zeta t} dt |  \\
 & \le  \eps + \frac{M_\eps}{|\Re \zeta|}
 \end{align*}
where the second bound follows from an integration by part of the second integral. 

Obviously,  $g_j$ is also holomorphic in $\Im \zeta < 0$,  continuous  over $\Im \zeta \le 0$, with bound 
\begin{equation} \label{uniform_gj}
 |g_j(\zeta)| \le \frac{1}{\beta-2}e^{-\sqrt{|\zeta|} y}, 
 \end{equation}
see \eqref{lowerbound_Fourier}. We finally  apply the Cauchy formula: for any  $t < 0$, for any $\mu > 0$
\begin{align*}
 \overline{\omega}_j^\flat(t)    = & \lim_{s \rightarrow +\infty} \: \frac{1}{2\pi} \int_{-s}^s \hat{f}_j(\zeta) \, g_j(\zeta)  e^{i \zeta t}\, d\zeta \\
  =  &  - \lim_{s \rightarrow +\infty}   \: \frac{1}{2\pi}  \biggl(   \int_{[-s,s]-i\mu}  \hat{f}_j(\zeta) \, g_j(\zeta)  e^{i \zeta t} \, d\zeta  + \int_{[s,s-i\mu]} \hat{f}_j(\zeta) \, g_j(\zeta)  e^{i \zeta t} \, d\zeta \\
 & + \int_{[-s-i\mu, -s]} \hat{f}_j(\zeta) \, g_j(\zeta)  e^{i \zeta t} \, d\zeta \biggr) 
\end{align*}
As $t < 0$, taking into account the first limit in \eqref{uniform_fj}, the first integral at the right-hand side goes to zero when $\mu \rightarrow +\infty$, while the two other integrals over the vertical segments converge to the integrals over the vertical half-lines: 
\begin{align*}
 \overline{\omega}_j^\flat(t)    = & \lim_{s \rightarrow +\infty} \frac{1}{2\pi}  \biggl(  \int_{[s,s-i\infty]} \hat{f}_j(\zeta) \, g_j(\zeta)  e^{i \zeta t} \, d\zeta  + \int_{[-s-i\infty, -s]} \hat{f}_j(\zeta) \, g_j(\zeta)  e^{i \zeta t} \, d\zeta \biggr) \\
  = & \lim_{s \rightarrow +\infty} \frac{1}{2\pi}     \biggl(  \int_{[0,-i\infty]} \hat{f}_j(s+\zeta) \, g_j(s+\zeta)  e^{i (s+\zeta) t} \, d\zeta  + \int_{[-i\infty, 0]} \hat{f}_j(-s+\zeta) \, g_j(-s+\zeta)  e^{i (-s + \zeta) t} \, d\zeta \biggr)
 \end{align*} 
Using the second limit in \eqref{uniform_fj} and the bound \eqref{uniform_gj}, we can conclude that the limit at the right-hand side is zero thanks to the dominated convergence theorem. 

\medskip 
To prove the second item of the lemma, we remark from formula \eqref{Fourier_omega}   that 
$$  (1+|\zeta|)^{3/4} \hat{\omega}_j^\flat  \in L^2_\zeta(\R, L^2_y(\R_+, H^k_x(\T))), \quad (1+|\zeta|)^{1/4} \hat{\omega}_j^\flat  \in L^2_\zeta(\R, H^1_y(\R_+, H^k_x(\T))), \quad \forall k$$ 
using the smoothness of $\hat{f}_j$ with respect to $x$. We deduce that 
\begin{equation} \label{regularity}
\overline{\omega}_j^\flat \in H^{3/4}_t(\R, L^2_y(\R_+, H^k_x(\T))), \quad \overline{\omega}_j^\flat \in H^{1/4}_t(\R, H^1_y(\R_+, H^k_x(\T)), \quad \forall k.
\end{equation}
Moreover, using again \eqref{Fourier_omega} and Plancherel in time, we get  that:   for any $\varphi = \varphi(t,x,y)$ smooth and fastly decreasing as $t \rightarrow \pm \infty$ and $y \rightarrow +\infty$, 
\begin{equation*}
\int_{\R \times \R_+ \times \T}   \overline{\omega}_j^\flat  \, (\beta (j+1) - \pa_t)  \varphi  \: + \:  \int_{\R \times \R_+ \times \T}  \pa_y    \overline{\omega}_j^\flat    \, \pa_y \varphi  - \int_{\R \times \T} (2 \overline{\omega}_j^\flat\vert_{y=0} + f_j) \, \varphi\vert_{y=0}  = 0.
\end{equation*}
If we take $\varphi$ with support in time included in $(-\infty, T)$, taking into account that  $\overline{\omega}_j^\flat$ is zero for negative times, we end up with 
\begin{equation*}
\int_{(0,T) \times \R_+ \times \T}   \overline{\omega}_j^\flat  \, (\beta (j+1) - \pa_t)  \varphi  \: + \:  \int_{(0,T) \times \R_+ \times \T}  \pa_y    \overline{\omega}_j^\flat    \, \pa_y \varphi  - \int_{(0,T) \times \T} (2 \overline{\omega}_j^\flat\vert_{y=0} + \frac{M_j}{M_{j+1}}h_{j+1}) \, \varphi\vert_{y=0} =0.
\end{equation*}
We recognize the weak formulation of system \eqref{eq:omflatj1}-\eqref{eq:omflatj2}-\eqref{eq:omflatj3}. The identity $\overline{\omega}_j^\flat = \omega_j^\flat$ over $(0,T)$ follows from the uniqueness of solutions to this system (for example in the regularity class given by \eqref{regularity}).

\bibliographystyle{abbrv}
\bibliography{DavidBib,VladBib}

\end{document}